\newtheorem{theorem}             {Theorem}  [section]
\newtheorem{definition} [theorem] {Definition}
\newtheorem{lemma}      [theorem]{Lemma}
\newtheorem{corollary}  [theorem]{Corollary}
\newtheorem{proposition}[theorem]{Proposition}
\newtheorem{remark} [theorem] {Remark}
\numberwithin{equation}{section} \everymath{\displaystyle}
\newcommand{\Cont}{{\rm C}}
\newcommand{\Sch}{\mathcal{S}}
\newcommand{\Cl}{\mathfrak{C}}
\newcommand{\intL}{{\rm L}}
\newcommand{\Ht}{{\rm Ht}}
\newcommand{\BesselK}{\mathcal{K}}
\newcommand{\gp}[1]{\mathbf{#1}}
\newcommand{\GL}{{\rm GL}}
\newcommand{\SO}{{\rm SO}}
\newcommand{\SU}{{\rm SU}}
\newcommand{\ag}[1]{\mathbb{#1}}
\newcommand{\Mat}{{\rm M}}
\newcommand{\Casimir}{\mathcal{C}}
\newcommand{\F}{\mathbf{F}}
\newcommand{\Place}{{\rm S}}
\newcommand{\vo}{\mathfrak{o}}
\newcommand{\vp}{\mathfrak{p}}
\newcommand{\FuncRA}{{\rm A}}
\newcommand{\ProjF}{{\rm F}}
\newcommand{\ProjG}{{\rm G}}
\newcommand{\ProjP}{{\rm P}}
\newcommand{\norm}[1][\cdot]{\lvert #1 \rvert}
\newcommand{\extnorm}[1]{\left\lvert #1 \right\rvert}
\newcommand{\Norm}[1][\cdot]{\lVert #1 \rVert}
\newcommand{\extNorm}[1]{\left\lVert #1 \right\rVert}
\newcommand{\Pairing}[2]{\langle #1, #2 \rangle}
\newcommand{\Four}[2][]{\mathfrak{F}_{#1}(#2)}
\newcommand{\rpL}{{\rm L}}
\newcommand{\rpR}{{\rm R}}
\newcommand{\Res}{{\rm Res}}
\newcommand{\Ind}{{\rm Ind}}
\newcommand{\Intw}{\mathcal{M}}
\newcommand{\IntwR}{\mathcal{R}}
\newcommand{\Eis}{{\rm E}}
\newcommand{\Pcare}{{\it P}}
\newcommand{\Cond}{\mathbf{C}}
\newcommand{\cond}{\mathfrak{c}}
\newcommand{\fin}{{\rm fin}}
\newcommand{\eisCst}{{\rm E}_{\gp{N}}}
\newcommand{\Vol}{{\rm Vol}}
\newcommand{\Rmnum}[1]{\expandafter\@slowromancap\romannumeral #1@}
\title{A Note on Spectral Analysis for $\GL_2$: \Rmnum{1}}
\author{Han Wu}
\thanks{Research partially supported by DFG-SNF-grant 00021L\_153647}
\begin{document}
	
	\begin{abstract}
		We establish the Fourier inversion for the smooth vectors in $\intL^2(\GL_2, \omega)$ over a number field $\F$, using minimal knowledge from automorphic representation theory. We point out a possible way to establish Fourier inversion for larger classes of functions. We also point out the incompleteness of some commonly believed ``proof'' of Fourier inversion in the literature. Moreover, the explicit computation of the intertwining operator has independent interests.
	\end{abstract}
	
	\maketitle
	
	\tableofcontents

\section{Introduction}

	\subsection{Some General Remarks}

	The group representation theory gives a unifying viewpoint of many theories, such as the classical Fourier analysis on $\ag{R}$ and $\ag{R}/\ag{Z}$, the harmonic polynomials on spheres, modular forms, etc. In the case of modular forms, this new perspective gave birth to the spectral theory of automorphic representations, which provides a powerful tool for analytic number theorists among other fruitful applications.
	
	If we look carefully into the classical Fourier analysis, we may find three parts of the theory: \\
(S1) Plancherel formula: Namely, we look for a decomposition of the concerned representation into a direct integral of irreducible representations, in the sense of identifying the inner products on both sides. This is what is conventionally called the ``spectral decomposition''. \\
(S2) Fourier inversion: We ``pull back'' the abstract underlying Hilbert spaces of the irreducible representations to suitable spaces of functions on $\ag{R}$ and verify the resulting Fourier inversion formula for as large as possible subspace of functions. \\
(S3) Justification of the formulae of the spectral projections: In fact, in the part (S1) one always first uses a dense subspace to establish the Plancherel formula then extends it to the whole space of square-integrable functions. However, the formulae of the spectral projections obtained in part (S1) always make sense for a larger subspace. Hence one must check that for this larger subspace, the spectral projections obtained by $\intL^2$-extension coincide almost everywhere with the direct computation using the formulae.

	Looking into the literature of the corresponding theory for $\GL_2$, we find that the automorphic representation theorists on the analytic side were mostly interested in the part (S1) of the theory. The fundational work is implicit in the famous book \cite{JL70} and best explained in \cite[\S 4]{GJ79}. It is then largely generalized to any connected reductive group in \cite{MW95}. Although strikingly general enough, this is not sufficient for applications in analytic number theory, especially when one meets the analogous problem for the automorphic kernel function while applying the relative trace formulae. For this reason, Iwaniec carefully studied the part (S2) in the special case over $\ag{Q}$ for $\gp{K}$-invariant incomplete Eisenstein (theta) series \cite[Theorem 7.3]{Iw02} and for $\gp{K}$-invariant automorphic kernel functions \cite[Theorem 7.4]{Iw02}. In particular, the later implies the classical Kuznetsov formula \cite[Theorem 9.3]{Iw02} or more suitably \cite[Theorem 7.14]{KL13}. The Kuznetsov formula together with the Petersson formula have become a fundamental tool in analytic number theory. Countless beautiful results are based on them, among which we only mention \cite{BH08} with addendum \cite{BH14_add} for instance.
	
	Turning back to the part (S2) for $\GL_2$, there are three obvious directions of generalization: generalizing to number fields, getting rid of $\gp{K}$ (or $\gp{K}_{\infty}$)-finiteness and changing the relative trace formulae. The first is natural due to the development of adelic language. It should be noted that the relative trace formula approach is not the only way of generalizing the classical Kuznetsov and Petersson formulae (see \cite{Ma13} for the Poincar\'e series approach). The second and the third are also naturally interesting for the theory itself. Whether or not they give interesting applications, the author can not foresee without trying them seriously. This is what the author intends to do in the current paper. Actually we have already done the first and second for smooth vectors \cite[Theorem 2.16]{Wu14} and applied it to a subconvexity problem \textit{loc.cit.} following the method initiated by \cite{MV10}. We give an alternative and simpler proof in this paper, which avoids using the delicate Whittaker-Kirillov theory. Hopefully the techniques can also apply to the automorphic kernel functions.

	\subsection{Formalism of Spectral Decomposition}
	
	Let $(\rpR, V_{\rpR})$ be a unitary representation of a locally compact group $\gp{G}$.. Let $\widehat{\gp{G}}$ be the unitary dual of $\gp{G}$ which is naturally equipped with the Fell topology. The theory of spectral decomposition is to find and establish:
\begin{itemize}
	\item[(1)] A (Plancherel) measure $d\mu = d\mu_{\rpR}$ on $\widehat{\gp{G}}$;
	\item[(2)] For each $\pi \in \widehat{\gp{G}}$ in the support of $d\mu$, a $\gp{G}$-intertwiner $\ProjF_{\pi}: V_{\rpR} \to V_{\pi}$ called the \emph{spectral projectors}, where $V_{\pi}$ is the underlying Hilbert space of $\pi$ with norm $\Norm_{\pi}$;
	\item[(3)] For every $v \in V_{\rpR}$, $\ProjF_{\pi}(v)$ is well-defined for $\pi$ outside a set with $d\mu$-measure $0$, and we have the Plancherel formula
\begin{equation}
	\int_X \norm[v(x)]^2 dx = \int_{\widehat{\gp{G}}} \Norm[\ProjF_{\pi}(v)]_{\pi}^2 d\mu(\pi).
\label{Plancherel}
\end{equation}
\end{itemize}
	Usually, this is achieved by first considering a nice subspace $V \subset V_{\rpR}$ and establishing (\ref{Plancherel}) for $V$; then one can extend the Plancherel formula to $V_{\rpR}$ by the density of $V$. We also usually get (S2) for $V$.

	\subsection{Formalism of Fourier Inversion}
	
	Let $X$ be a topological space on which a locally compact group $\gp{G}$ acts from the right. Let $dx$ be a $\gp{G}$-invariant measure on $X$. The space $\intL^2(X,dx)$ of square integrable $\ag{C}$-valued functions is naturally equipped with a unitary $\gp{G}$-action. As a special case of the setting in the previous subsection, we may consider $V_{\rpR}=\intL^2(X,dx)$. Practically in many cases, both $X$ and $\gp{G}$ come up with smooth structures with which the $\gp{G}$-action is compatible. The smooth structure of $\gp{G}$ allows us to define the $\gp{G}$-subspace of smooth vectors $\rpR^{\infty}$ resp. $\pi^{\infty}$ or $V_{\rpR}^{\infty}$ resp. $V_{\pi}^{\infty}$. For example, if $\gp{G}$ is a Lie group, we have differential operators associated to the Lie algebra of $\gp{G}$ and the meaning of a smooth vector is clear; while if $\gp{G}$ is totally disconnected, a smooth vector admits an open subgroup of $\gp{G}$ as its stabilizer group. During the establishment of (\ref{Plancherel}), we always have obtained some function realization operator for every $\pi$ in the support of $d\mu$
	$$ \FuncRA_{\pi}: V_{\pi}^{\infty} \to \Cont^{\infty}(X). $$
	In general, we can prove $V_{\rpR}^{\infty} \subset \Cont^{\infty}(X)$ and $\ProjF_{\pi}(V_{\rpR}^{\infty}) \subset V_{\pi}^{\infty}$ using suitable versions of Dixmier-Malliavan theorem. Thus we can define
	$$ \ProjP_{\pi}: V_{\rpR}^{\infty} \to \Cont^{\infty}(X), v \mapsto \FuncRA_{\pi}(\ProjF_{\pi}(v)). $$
\begin{definition}
	The \emph{Fourier inversion of $\rpR$ or $\intL^2(X,dx)$ for smooth vectors} is the identification as functions on $X$
	$$ v(x) = \int_{\widehat{\gp{G}}} \ProjP_{\pi}(v)(x) d\mu(\pi), \forall v \in V_{\rpR}^{\infty} $$
	with a specified convergence. Precisely, if we have the convergence
\begin{itemize}
	\item[(1)] in Cauchy principal sense with respect to $\pi \in \widehat{\gp{G}}$ and pointwise with respect to $x \in X$, then we call it a \emph{weak pointwise Fourier inversion formula};
	\item[(1')] in $\intL^1$ with respect to $d\mu(\pi)$ and pointwise with respect to $x \in X$, then we call it a \emph{pointwise Fourier inversion formula};
	\item[(2)] in Cauchy principal sense with respect to $\pi \in \widehat{\gp{G}}$ and uniform on compact subsets with respect to $x \in X$, then we call it a \emph{weak Fourier inversion formula};
	\item[(3)] in $\intL^1$ with respect to $d\mu(\pi)$ and normal with respect to $x \in X$, then we call it a \emph{strong Fourier inversion formula}.
\end{itemize}
	We shall call the right hand side the \emph{Fourier expansion} of $v$ (at $x$).
\label{FourInvDef}
\end{definition}
\begin{remark}
	Note that $\FuncRA_{\pi}$ is a mapping between two Fr\'echet spaces. The continuity of this map is sometimes automatic and seems to be too fundamental to be avoided in general. In particular, it seems to be a heart part to guarantee a strong Fourier inversion formula.
\end{remark}

	\subsection{Statement of the Main Results}
	
	Let $\F$ be a number field with ring of adeles $\ag{A}$. Let $\omega$ be a Hecke character. We denote by $\rpR_{\omega}$ the unitary representation of $\GL_2(\ag{A})$ on $\intL^2(\GL_2,\omega)$ consisting of measurable functions with central character $\omega$, square-integrable over $\GL_2(\F) \gp{Z}(\ag{A}) \backslash \GL_2(\ag{A})$, where $\gp{Z}$ is the center group of $\GL_2$. Let $\gp{K}$ be the usual maximal compact subgroup of $\GL_2(\ag{A})$. We state our main theorem as follows, which is a careful re-statement of \cite[Theorem 2.16]{Wu14}.
\begin{theorem}
	Let $\varphi \in V_{\rpR_{\omega}}^{\infty}$ be represented by a smooth function. Then the Fourier expansion of $\varphi$
	$$ \sum_{\pi \text{ cusp}} \sum_{\vec{n}} \ProjP_{\pi}(\varphi)[\vec{n}](g) + \sum_{\chi^2=\omega} \ProjP_{\chi \circ \det}(\varphi)(g) + \sum_{\xi} \int_0^{\infty} \sum_{\vec{n}} \ProjP_{iy,\xi,\omega\xi^{-1}}(\varphi)[\vec{n}](g) \frac{dy}{2\pi} $$
converges normally to $\varphi(g)$. Here ``$[\vec{n}]$'' is a natural parametrization of $\gp{K}$-isotypic types which will be introduced in Definition \ref{KProjParam}.
\label{MainThm}
\end{theorem}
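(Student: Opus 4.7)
The plan is to use the Plancherel formula already established earlier in the paper to decompose
$$ \varphi = \varphi_{\rm cusp} + \varphi_{\rm res} + \varphi_{\rm cont} $$
as the $\intL^2$-sum of its cuspidal, one-dimensional residual, and continuous components, and then to prove that each of the three pieces in the statement of the theorem converges normally on compact subsets of $\GL_2(\ag{A})$. Since normal convergence of the truncated partial sums on compacta implies $\intL^2$-convergence on any fixed fundamental domain, identifying the normally convergent total with $\varphi(g)$ is automatic from the uniqueness of $\intL^2$-limits together with continuity of the sum.

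The residual part is a finite sum of characters $\chi \circ \det$ with $\chi^2 = \omega$, hence trivially normally convergent. For the cuspidal part, smoothness of $\varphi$ at the non-archimedean places forces $\varphi$ to be right-invariant under some open compact subgroup $\gp{K}_0 \subset \gp{K}_{\fin}$, cutting the sum down to representations $\pi$ with $\pi^{\gp{K}_0} \neq 0$. Applying the archimedean Casimir $\Casimir$ and the $\gp{K}_\infty$-Laplacian to $\varphi$ produces rapid polynomial decay in both the archimedean infinitesimal character of $\pi$ and in the $\gp{K}_\infty$-isotypic labels $\vec{n}$; combined with the finite dimensionality of the space of cusp forms of bounded $\gp{K}_0$-level and bounded infinitesimal character, and with Sobolev embedding to pass from $\intL^2$ to uniform bounds on compacta, this yields normal convergence of the first series.

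The main obstacle is the continuous (Eisenstein) part, where the Plancherel formula a priori only gives $\intL^2$-convergence in $y$. I would upgrade this in three steps. First, smoothness of $\varphi$ at the non-archimedean places restricts the Hecke characters $\xi$ occurring to a finite set (those of conductor dividing the level of $\varphi$). Second, applying the archimedean Casimir $\Casimir$ to $\varphi$ scales $\ProjP_{iy,\xi,\omega\xi^{-1}}(\varphi)$ by an eigenvalue of order $y^2$, so iterating the operator and using the Plancherel identity forces the spectral coefficients to decay faster than any polynomial in $y^{-1}$; analogously, the $\gp{K}_\infty$-Laplacian produces polynomial decay in $\vec{n}$. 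Third, I need a uniform pointwise bound on $\Eis(iy,\xi,\omega\xi^{-1};g)$ and on each of its $\gp{K}$-isotypic components on a compact $\Omega \subset \GL_2(\ag{A})$, polynomial in $y$ and in $\vec{n}$. Assembling these gives
$$ \sum_{\xi} \int_0^{\infty} \sum_{\vec{n}} \sup_{g \in \Omega} \extnorm{\ProjP_{iy,\xi,\omega\xi^{-1}}(\varphi)[\vec{n}](g)} \frac{dy}{2\pi} < \infty, $$
which is exactly normal convergence on $\Omega$.

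The hardest step is this last uniform bound for Eisenstein series. I would use the explicit intertwining operator computation emphasized in the abstract to bound each flat section and its $\gp{K}$-isotypic projections uniformly in the spectral parameter, controlling the contribution of the constant terms by a Maass--Selberg style manipulation. Once this quantitative control is in hand, normal convergence of the continuous part follows from the decay estimates above, and the identification with $\varphi(g)$ is finished by the $\intL^2$-uniqueness argument from the first paragraph.
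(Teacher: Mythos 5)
Your normal-convergence step is essentially the paper's argument: a finite set of admissible $\xi$ and non-archimedean $\gp{K}$-types from the level of $\varphi$, rapid decay in $y$ and $\vec{n}_\infty$ by applying powers of the archimedean Laplacian $\Delta_\infty$, Sobolev embedding on compacta, a polynomial bound on $\gp{K}$-isotypic Eisenstein series via Maass--Selberg and the explicit intertwining operator, and a Weyl law for the cuspidal side. This matches Lemma~\ref{NConvFES} and Corollary~\ref{SobBdContKiso} in both outline and estimates.

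The gap is in the very first paragraph, where you declare that ``identifying the normally convergent total with $\varphi(g)$ is automatic from the uniqueness of $\intL^2$-limits together with continuity of the sum.'' This is not automatic, and is in fact the delicate point the paper is written to address. The Plancherel formula tells you that $\varphi$ corresponds, under the spectral isometry, to the measurable section $(\ProjF_\pi(\varphi))_\pi$; that isometry is obtained by $\intL^2$-extension from the dense space $V = \Cl_0 \oplus \Cl_c$. Your Fourier expansion, by contrast, is a \emph{pointwise} integral $\int_0^Y \Eis\big(iy;\ProjF_{iy,\xi,\omega\xi^{-1}}(\varphi)\big)(g)\,dy/2\pi$ over the continuous spectrum, and since Eisenstein series are not square-integrable this pointwise integral is \emph{not} a priori the image of the truncated section under the abstractly defined inverse isometry. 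Equating the two is exactly what the paper calls (S3): the direct formula for the spectral projection must be shown to coincide almost everywhere with its $\intL^2$-extension. Your proposal takes this for granted, but nothing in the Plancherel formula or in normal convergence gives it, because $\varphi$ lies outside $V$ and the wave-packet construction of the inverse isometry does not manifestly commute with the pointwise realization $\FuncRA_{iy}$. The paper handles this by introducing the directly computed projection $\ProjG_{iy,\xi,\omega\xi^{-1}}(h)$ for $h \in \Cont_c^\infty(\GL_2,\omega)$, proving $\ProjG = \ProjF$ a.e.\ in Lemma~\ref{FFforCcInf}, and then running a distributional argument: pair the normally convergent sum $\tilde{\varphi}$ against test functions $h$, use Fubini, replace $\ProjG(h)$ by $\ProjF(h)$, apply Plancherel, and conclude $\tilde{\varphi}=\varphi$ as continuous functions. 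Without this step your argument has the same defect the paper's Section 4 attributes to the ``incomplete proof'' in the literature. (A secondary remark: the fundamental domain is not compact, so uniform convergence on compacta does not by itself give $\intL^2$-convergence on the whole quotient; one should instead argue, as the paper does, that the continuous limit $\tilde{\varphi}$ and $\varphi$ agree a.e.\ via the pairing against test functions.)
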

\begin{remark}
	Because of the lack of the part (S3) for smooth vectors for $\GL_2$, the notation $\Pairing{\varphi}{E(s,\Phi)}$ in \cite[Theorem 2.16]{Wu14} was actually not justified. Actually we will establish (S3) for $\Cont_c^{\infty}(\GL_2,\omega)$ in Lemma \ref{FFforCcInf}. The method extends easily to the Schwartz space $\Sch(\GL_2,\omega)$.
\end{remark}
\begin{remark}
	The lack of an explicit construction of an approximation of the Dirac measure using functions in $V$ seems to be a major obstruction for the Fourier inversion of larger class of functions, as well as (S3) for larger class.
\label{RkMain}
\end{remark}


\section{Review of Spectral Decomposition}

	\subsection{Review of Classical Fourier Analysis}
	
	The standard ways (e.g. \cite[\S \Rmnum{6}.1]{Yos80} or \cite[Chapter 9]{Ru86}) of establishing the classical Fourier analysis on $\intL^2(\ag{R})$ actually does not follow the order (S1), (S2) and (S3) in the second paragraph of \S 1.1. One either uses the Schwartz functions or defines the Fourier transform via the explicit formulae for functions in $\intL^1(\ag{R})$ and establishes Fourier inversion for fairly large class of functions prior to the Plancherel formula. However, both ways do not seem to generalize well to $\GL_2$. A non standard way, which follows more closely our scheme and which does generalize to $\GL_2$ for the part (S1), goes as follows.
	
	Take $V \subset V_{\rpR} = \intL^2(\ag{R})$ consisting of the span of functions of the form $x^n e^{-ax^2}$, where $a>0,n \in \ag{N}$. Since the Fourier transform of Gaussian functions are explicitly computable, we easily verify for $v \in V$
	$$ \hat{v}(\xi) = \int_{\ag{R}} v(x) e^{-2\pi i x \xi} dx \in V \quad \& \quad \int_{\ag{R}} \norm[v(x)]^2 dx = \int_{\ag{R}} \norm[\hat{v}(\xi)]^2 d\xi. $$
\noindent We readily define for $v \in V$
	$$ \ProjF_{\xi}(v) = \hat{v}(\xi) e_{\xi} \in V_{\xi} = \ag{C} e_{\xi} \quad \& \quad \pi_{\xi}(x).e_{\xi} = e^{2\pi i x \xi} e_{\xi}. $$
Note that we readily verify (S2) for $V$ with strong Fourier inversion by direct computation if we \emph{define}
	$$ \FuncRA_{\xi}(e_{\xi})(x) = e^{2\pi i \xi x}. $$
By the (locally compact version of) Stone-Weierstrass theorem, for any $f \in \Cont_c(\ag{R})$ and any $\epsilon > 0$, one can find $v \in V$ such that $\sup_{x \in \ag{R}} \norm[f(x)e^{x^2}-v(x)] < \epsilon$. Thus
	$$ \int_{\ag{R}} \norm[f(x) - v(x)e^{-x^2}]^2 dx < \epsilon^2 \int_{\ag{R}} e^{-2x^2} dx, $$
from which we see that any function in $\Cont_c(\ag{R})$ can be approximated by functions in $V$ in the sense of $\intL^2$. Hence $V$ is dense in $V_{\rpR}$. We obtain (S1) and an extension of $\ProjF_{\xi}$ to $V_{\rpR}$. Let's write $\ProjF_{\xi}(f)=\tilde{f}(\xi)e_{\xi}$ for $f \in V_{\rpR}$. We have $\tilde{v}=\hat{v}$ for $v \in V$.

	For (S2), let $f \in \intL^2(\ag{R})$, continuous at $x=y$ and $\tilde{f} \in \intL^1(\ag{R})$. Take $v_{\epsilon}(x)= \epsilon^{-1} e^{-\pi x^2 / \epsilon^2} \in V$. We have $\tilde{v}_{\epsilon}(\xi)=\hat{v}_{\epsilon}(\xi) = e^{-\pi \epsilon^2 \xi^2}$. As $\epsilon \to 0^+$, $v_{\epsilon}$ is an approximation of the Dirac measure. We get by the dominated convergence theorem the pointwise Fourier inversion formula
	$$ f(y) = \lim_{\epsilon \to 0^+} \int_{\ag{R}} f(y+x) \overline{v_{\epsilon}(x)} dx = \lim_{\epsilon \to 0^+} \int_{\ag{R}} \tilde{f}(\xi) e^{2\pi i \xi y} e^{-\pi \epsilon^2 \xi^2} d\xi = \int_{\ag{R}} \tilde{f}(\xi) e^{2\pi i \xi y} d\xi. $$
It is easy to get stronger Fourier inversion formulae for smaller classes of functions by refining the above argument.

	Now take any $f \in \intL^1(\ag{R}) \cap \intL^2(\ag{R})$. The formula for $\hat{f}$ still makes sense. We need to check $\tilde{f}(\xi)=\hat{f}(\xi)$ a.e. $\xi$. To this end, we need
\begin{lemma}
	Let $1 \leq p < \infty$ and $f \in \intL^p(\ag{R})$, then
	$$ \lim_{\epsilon \to 0^+} \Norm[f*v_{\epsilon}-f]_p = 0. $$
\end{lemma}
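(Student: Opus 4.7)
The lemma is a standard approximate identity result. The plan is to exploit that $v_\epsilon$ is a positive Schwartz function with $\int_{\ag{R}} v_\epsilon(x)\, dx = 1$ for every $\epsilon > 0$ (by direct evaluation of the Gaussian integral), and whose mass concentrates at the origin: for any fixed $\delta > 0$, $\int_{|x| > \delta} v_\epsilon(x)\, dx \to 0$ as $\epsilon \to 0^+$. So $\{v_\epsilon\}$ is a classical approximate identity, and we only need to run the standard three-step argument.

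First I would establish an a priori bound: by Minkowski's integral inequality (or equivalently Young's convolution inequality with $q = 1$),
$$ \Norm[f * v_\epsilon]_p \leq \Norm[v_\epsilon]_1 \cdot \Norm[f]_p = \Norm[f]_p, $$
so the convolution operators $T_\epsilon: f \mapsto f * v_\epsilon$ are uniformly bounded on $\intL^p(\ag{R})$ with operator norm $\leq 1$. Second, I would verify the lemma directly on the dense subspace $\Cont_c(\ag{R}) \subset \intL^p(\ag{R})$: for $g \in \Cont_c(\ag{R})$, uniform continuity gives, for any $\eta > 0$, a $\delta > 0$ with $|g(x-y) - g(x)| < \eta$ whenever $|y| < \delta$. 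Splitting
$$ (g * v_\epsilon - g)(x) = \int_{|y| < \delta} (g(x-y) - g(x)) v_\epsilon(y)\, dy + \int_{|y| \geq \delta} (g(x-y) - g(x)) v_\epsilon(y)\, dy, $$
the first piece is bounded by $\eta$ pointwise, while the second piece is bounded by $2 \Norm[g]_\infty \int_{|y| \geq \delta} v_\epsilon(y)\, dy \to 0$. This yields uniform convergence $g * v_\epsilon \to g$; since $g * v_\epsilon$ and $g$ are jointly supported in a fixed compact set (enlarged by a bounded amount if one truncates the tail of $v_\epsilon$, or alternatively one estimates via $\Norm[g * v_\epsilon - g]_p^p$ directly using Fubini and uniform continuity), the uniform convergence upgrades to $\intL^p$ convergence.

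Third, I would combine these by a density argument. Given $f \in \intL^p(\ag{R})$ and $\eta > 0$, by density of $\Cont_c(\ag{R})$ in $\intL^p(\ag{R})$ for $1 \leq p < \infty$, choose $g \in \Cont_c(\ag{R})$ with $\Norm[f - g]_p < \eta / 3$. Then
$$ \Norm[f * v_\epsilon - f]_p \leq \Norm[(f-g)*v_\epsilon]_p + \Norm[g * v_\epsilon - g]_p + \Norm[g - f]_p \leq 2 \Norm[f - g]_p + \Norm[g * v_\epsilon - g]_p, $$
where the first term used the operator-norm bound from Step~1. The last term tends to zero by Step~2, so $\limsup_{\epsilon \to 0^+} \Norm[f * v_\epsilon - f]_p \leq 2\eta/3 < \eta$, which gives the conclusion since $\eta$ was arbitrary.

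There is no serious obstacle: the only mildly delicate point is the $\intL^p$-boundedness of $T_\epsilon$, which requires invoking Minkowski's integral inequality rather than a naive pointwise estimate. The restriction $p < \infty$ is used precisely in the density of $\Cont_c(\ag{R})$ in $\intL^p(\ag{R})$; for $p = \infty$ the statement would fail for generic bounded measurable $f$.
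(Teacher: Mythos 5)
Your proof is correct, and it is essentially the standard approximate-identity argument that the paper itself delegates to Rudin (the paper's ``proof'' consists entirely of the remark that it goes the same way as \cite[Theorem 9.10]{Ru86}). Whether one organizes it as you do (uniform $\intL^p$-boundedness of $T_\epsilon$ via Minkowski/Young, verification on the dense subspace $\Cont_c(\ag{R})$, then a $3\eta$-argument) or as Rudin does (Minkowski's integral inequality to reduce to continuity of translation $y \mapsto \Norm[f(\cdot - y)-f]_p$, which in turn uses density of $\Cont_c$), the two are rearrangements of the same ingredients, and your write-out is a faithful expansion of what the paper leaves implicit.
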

\begin{proof}
	The proof goes exactly the same as that of \cite[Theorem 9.10]{Ru86}. In fact, it is a property of any approximation of the Dirac measure.
\end{proof}
\noindent Now first by the dominated convergence theorem then by the Plancherel formula, we get
	$$ \hat{f}(\xi) = \lim_{\epsilon \to 0^+} \int_{\ag{R}} f(x) e^{-2\pi i x \xi} \overline{\hat{v}_{\epsilon}(x)} dx = \lim_{\epsilon \to 0^+} \int_{\ag{R}} \tilde{f}(y) v_{\epsilon}(\xi -y) dy, \forall \xi \in \ag{R}. $$
On the other hand, the lemma implies $\int_{\ag{R}} \tilde{f}(y) v_{\epsilon}(\xi -y) dy \to \tilde{f}(\xi)$ in $\intL^2(\ag{R})$, hence for a.e. $\xi$ after taking a sub-sequence of $\epsilon$. (S3) is verified for $\intL^1(\ag{R}) \cap \intL^2(\ag{R})$.
\begin{remark}
	In the last equation, we must check but have omitted the proof of the fact
	$$ \int_{\ag{R}} v(x) e^{-2\pi i x \xi} \overline{\hat{v}_{\epsilon}(x)} dx = \int_{\ag{R}} \hat{v}(y) v_{\epsilon}(\xi -y) dy, \forall v \in V, $$
	which follows from a direct computation.
\end{remark}

	It seems to us that to establish (S2) and (S3) for classes of functions as large as the above ones, the use of an approximate of the Dirac mass $v_{\epsilon}$ could not be avoided. However if we are only interested in smaller classes of functions, we have the following alternative approach.
	
	First we verify (S3) for $h \in \Cont_c^{\infty}(\ag{R})$. Since (S2) is already verified for $v \in V$ with strong Fourier inversion, we have with normal convergence
	$$ v(x) = \int_{\ag{R}} \hat{v}(\xi) e^{2\pi i \xi x} dx. $$
Now that $f$ has compact support and is bounded, we can apply Fubini theorem and get
	$$ \int_{\ag{R}} h(x) \overline{v(x)} dx = \int_{\ag{R}} \int_{\ag{R}} h(x) \overline{\hat{v}(\xi)} e^{-2\pi i \xi x} d\xi dx = \int_{\ag{R}} \hat{h}(\xi) \overline{\hat{v}(\xi)} d\xi. $$
Furthermore, by integration by parts we see for any $n \in \ag{N}$
\begin{equation}
	\hat{h}(\xi) = (2\pi i \xi)^{-n} \int_{\ag{R}} h^{(n)}(x) e^{-2\pi i \xi x} dx \ll \norm[\xi]^{-n}.
\label{FourDecCla}
\end{equation}
Hence $\hat{h}$ is rapidly decreasing and bounded in $\xi$. In particular, it is in $\intL^2$. On the other hand, the Plancherel formula gives
	$$  \int_{\ag{R}} h(x) \overline{v(x)} dx = \int_{\ag{R}} \tilde{f}(\xi) \overline{\hat{v}(\xi)} d\xi. $$
Thus the $\intL^2$ function $\tilde{h} - \hat{h}$ is orthogonal to $\hat{v}$ for any $v \in V$. But the density of $V$ implies that of $\hat{V}$. Therefore $\tilde{h} - \hat{h} = 0$ in the sense of $\intL^2$, hence almost everywhere.

	Then we verify (S2) for $f \in V_{\rpR}^{\infty}=W^{\infty,2}(\ag{R})$. Since $\frac{d^n}{dx^n}f \in V_{\rpR}$ and
\begin{equation}
	\ProjF_{\xi}(\frac{d^n}{dx^n}f) = \frac{d^n}{dx^n} \ProjF_{\xi}(f) = (2\pi i \xi)^n \tilde{f}(\xi) e_{\xi} \Rightarrow (2\pi i \xi)^n \tilde{f}(\xi) \in \intL^2(\ag{R})
\label{PrDecCla}
\end{equation}
by the proof of \cite[Proposition 1.4]{CP90} (which is a consequence of the Dixmier-Malliavan theorem), we deduce that the Fourier expansion of $f$ satisfies
	$$ \int_{\ag{R}} \extnorm{\tilde{f}(\xi) e^{2\pi i \xi x}} d\xi \leq \left( \int_{\ag{R}} (1+(2\pi \xi)^2)^2 \norm[\tilde{f}(\xi)]^2 d\xi \right)^{\frac{1}{2}} \left( \int_{\ag{R}} (1+(2\pi \xi)^2)^{-2} d\xi \right)^{\frac{1}{2}} < \infty, $$
thus converges (at least) normally in $x$. In particular, it defines a continuous function in $x$, as well as the associated absolute integral. For any $h \in \Cont_c^{\infty}(\ag{R})$ we can apply Fubini and Plancherel to get
	$$ \int_{\ag{R}} \overline{h(x)} \int_{\ag{R}} \tilde{f}(\xi) e^{2\pi i \xi x} d\xi dx =  \int_{\ag{R}} \tilde{f}(\xi) \overline{\hat{h}(\xi)} d\xi = \int_{\ag{R}} \tilde{f}(\xi) \overline{\tilde{h}(\xi)} d\xi = \int_{\ag{R}} f(x) \overline{h(x)} dx, $$
where we have used (S3) for $h$. Hence both $f$ and its Fourier expansion define the same functional on $\Cont_c^{\infty}(\ag{R})$ and they are equal in the sense of distributions. They must be equal in the sense of $\intL_{{\rm loc}}^1(\ag{R})$, hence equal everywhere since they both are continuous functions.

	The above way is somewhat ``disguised''. But it seems to be convenient for generalization to $\GL_2$.
\begin{remark}
	The equations (\ref{FourDecCla}) and (\ref{PrDecCla}) look similar, especially when one takes into account that $\Cont_c^{\infty}(\ag{R}) \subset V_{\rpR}^{\infty}$. But they do have different nature. Precisely, (\ref{FourDecCla}) is calculus as we proved it by integration by parts. (\ref{PrDecCla}) is representation theoretic. A more elementary proof without appealing to Dixmier-Malliavan goes as follows. By definition we have in the sense of $\intL^2$ in $x$
	$$ \lim_{t \to 0} \frac{1}{t} \left\{ \rpR(t)f -f \right\} = \frac{d}{dx}f. $$
Taking Fourier transform, this amounts to the equation in the sense of $\intL^2$ in $\xi$
	$$ \lim_{t \to 0} \frac{1}{t} \left\{ \tilde{f}(\xi)e^{2\pi i \xi t} -\tilde{f}(\xi) \right\} = \widetilde{\frac{d}{dx}f}(\xi). $$
This implies the equality almost everywhere in $\xi$, i.e. (\ref{PrDecCla}) for $n=1$.
\end{remark}

	\subsection{Review of Spectral Decomposition for $\GL_2$}
	
	We summarize and re-interpret the concerned result in \cite[Section 3 \& 4]{GJ79} as follows.
\begin{remark}
	Throughout this paper, we fix a section $s_{\F}: \ag{R}_+ \to \F^{\times} \backslash \ag{A}^{\times}, t \mapsto t^+$ of the adelic norm map $\F^{\times} \backslash \ag{A}^{\times} \to \ag{R}_+, y \mapsto \norm[y]_{\ag{A}}$. Upon twisting by a suitable $\norm_{\ag{A}}^{i\alpha}, \alpha \in \ag{R}$, we can only consider those central characters $\omega$ trivial on the image of $s_{\F}$. We also normalize $\xi$'s that will appear later so that they are also trivial on the image of $s_{\F}$.
\label{CharNorm}
\end{remark}
\begin{definition}
	Let $V \subset V_{\rpR_{\omega}}$ be the direct sum of $\Cl_0$ (subspace of cuspidal forms) consisting of functions
	$$ \varphi \in \Cont_c^{\infty}(\GL_2, \omega), \int_{\F \backslash \ag{A}} \varphi(n(x)g) dx = 0, \forall g \in \GL_2(\ag{A}); $$
	and $\Cl_c$ consisting of incomplete theta series
\begin{equation}
	\Pcare(f)(g) = \sum_{\gamma \in \gp{B}(\F) \backslash \GL_2(\F)} f(\gamma g), f \in \Cont_c^{\infty}(\gp{N}(\ag{A})\gp{B}(\F) \backslash \GL_2(\ag{A}),\omega).
\label{PSDef}
\end{equation}
\label{FInvSpGL2}
\end{definition}
\noindent Note that $V \subset \Cont_c^{\infty}(\GL_2, \omega)$, and is dense in $V_{\rpR_{\omega}}=\intL^2(\GL_2, \omega)$. Since the delicate part is the continuous spectrum, we concerntrate ourselves in it. For any character $\xi$ of $\F^{\times} \backslash \ag{A}^{(1)}$ (extended by triviality on the image of $s_{\F}$) and $P(f) \in \Cl_c$, define
	$$ \hat{f}(s,\xi,\omega\xi^{-1})(g) = \int_{\ag{R}_+ \times \F^{\times} \backslash \ag{A}^{(1)}} f(a(t^+y)g) t^{-s-\frac{1}{2}} \xi^{-1}(y) \frac{dt}{t} d^{\times}y. $$
The integral converges for all $s \in \ag{C}$ since $f$ is of compact support. We also have the Mellin inversion
	$$ f(g) = \sum_{\xi} \int_{\Re s = c} \hat{f}(s,\xi,\omega\xi^{-1})(g) \frac{ds}{2\pi i}, $$
the sum over $\xi$ being finite due to the $\gp{K}_{\fin}$-finiteness of $f$. For $c \gg 1$ we can change the order of summation and get with normal convergence
	$$ P(f)(g) = \sum_{\xi} \int_{\Re s = c} \Eis(\hat{f}(s,\xi,\omega\xi^{-1}))(g) \frac{ds}{2\pi i}, \text{ with} $$
	$$ \Eis(\hat{f}(s,\xi,\omega\xi^{-1}))(g) = \sum_{\gamma \in \gp{B}(\F) \backslash \GL_2(\F)} \hat{f}(s,\xi,\omega\xi^{-1})(\gamma g). $$
The analyticity of $s \mapsto \Eis(\hat{f}(s,\xi,\omega\xi^{-1}))(g)$ is governed by its constant term
	$$ \eisCst(\hat{f}(s,\xi,\omega\xi^{-1}))(g) = \hat{f}(s,\xi,\omega\xi^{-1})(g) + \Intw \hat{f}(s,\xi,\omega\xi^{-1})(g), $$
where $\Intw = \Intw(s,\xi,\omega\xi^{-1}): V_{s,\xi,\omega\xi^{-1}} \to V_{-s,\omega\xi^{-1},\xi}$ is the analytic continuation in $s$ of the operator
\begin{equation}
	\Intw \hat{f}(s,\xi,\omega\xi^{-1})(g) = \int_{\ag{A}} \hat{f}(s,\xi,\omega\xi^{-1})(wn(x)g) dx,
\label{IntwDef}
\end{equation}
which intertwines the $\GL_2(\ag{A})$-actions of $\pi_{s,\xi,\omega\xi^{-1}}=\pi(\xi \norm_{\ag{A}}^s, \omega\xi^{-1} \norm_{\ag{A}}^{-s})$ and $\pi_{-s,\omega\xi^{-1},\xi}$. It has the following properties:
\begin{itemize}
	\item[(1)] It is meromorphic for $\Re s > -1/2$ and admits a possible simple pole at $s=1/2$ only if $\omega=\xi^2$ (c.f. Remark \ref{CharNorm}), in which case the residue is
\begin{align*}
	\Res_{s=\frac{1}{2}} \Intw \hat{f}(s,\xi,\xi)(g) &= -\frac{\Lambda_{\F}^*(0)}{2\Lambda_{\F}(2)} \cdot \int_{\gp{K}} \hat{f}(\frac{1}{2},\xi,\xi)(\kappa) \xi(\det \kappa)^{-1} d\kappa \cdot \xi(\det g) \\
	&= c_{\F} \cdot \int_{\GL_2(\F)\gp{Z}(\ag{A}) \backslash \GL_2(\ag{A})} P(f)(x) \overline{\xi(\det x)} dx \cdot \xi(\det g).
\end{align*}
	(Hence $c_{\F}^{-1}=\Vol(\GL_2(\F)\gp{Z}(\ag{A}) \backslash \GL_2(\ag{A}))$, as remarked in the line just before \cite[\S 5]{GJ79}.)
	\item[(2)] On the vertical line $s=iy \in i\ag{R}$, it is a unitary $\GL_2(\ag{A})$-intertwiner and we have (\cite[(8.30)]{Ge75})
	$$ \Eis(\Intw \hat{f}(iy,\xi,\omega\xi^{-1})) = \Eis(\hat{f}(iy,\xi,\omega\xi^{-1})). $$
	\item[(3)] $\Intw \hat{f}(\sigma+i\tau,\xi,\omega\xi^{-1})(g)$ is rapidly decreasing with respect to $\norm[\tau] \to \infty$ for $\sigma$ lying in any fixed compact sub-interval of $(-\epsilon, +\infty)$ and uniformly in $g$.
\end{itemize}
We state and postpone the proof of the following proposition, which is a generalization of \cite[Theorem 7.3]{Iw02}.
\begin{proposition}
	For any $P(f) \in \Cl_c$, we have the strong Fourier inversion
	$$ P(f)(g) = \sum_{\xi} \int_{-\infty}^{\infty} \Eis(\hat{f}(iy,\xi,\omega\xi^{-1}))(g) \frac{dy}{2\pi} +  c_{\F} \sum_{\chi: \chi^2=\omega} \Pairing{P(f)}{\chi \circ \det} \cdot \chi \circ \det(g). $$
\label{FourInvIncompTheta}
\end{proposition}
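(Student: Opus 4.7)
The approach is, for each $\xi$, to shift the contour of the normally convergent expansion
$$ P(f)(g) = \sum_\xi \int_{\Re s = c} \Eis(\hat f(s,\xi,\omega\xi^{-1}))(g) \frac{ds}{2\pi i} $$
obtained just above for $c \gg 1$, down to the critical line $\Re s = 0$, using the meromorphic continuation of $s \mapsto \Eis(\hat f(s,\xi,\omega\xi^{-1}))(g)$. Since $\hat f$ is entire in $s$, the only poles of the integrand in the strip $0 \leq \Re s \leq c$ come from $\Intw$; by property (1) they occur precisely at $s = 1/2$ when $\omega = \xi^2$, i.e.\ when $\xi = \chi$ with $\chi^2 = \omega$. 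The residues collected along the way should supply exactly the $\chi \circ \det$-terms appearing in the proposition.

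To justify the contour shift, I would first show that the integrals over the horizontal segments $\Im s = \pm T$, $0 \leq \Re s \leq c$, vanish as $T \to \infty$ uniformly for $g$ in compact sets. Integration by parts in the defining Mellin integral (legitimate because $f \in \Cont_c^\infty$) makes $\hat f(s,\xi,\omega\xi^{-1})(g)$ rapidly decreasing in $|\Im s|$ with bounds continuous in $g$; property (3) gives the analogous decay for $\Intw \hat f$; together these force the constant term $\eisCst(\hat f)$ to zero on those segments. The pseudo-Eisenstein difference $\Eis(\hat f) - \eisCst(\hat f)$ is rapidly decreasing in the Siegel domain and therefore also drops out in the limit, closing the contour.

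Next I would identify $\Res_{s=1/2} \Eis(\hat f(s,\chi,\chi))(g)$. Since $\hat f$ is holomorphic in $s$, the residue of $\eisCst(\hat f) = \hat f + \Intw \hat f$ reduces to the residue of $\Intw \hat f$, which is given in closed form by property (1) as $c_\F \Pairing{P(f)}{\chi \circ \det}\chi \circ \det(g)$. Because this expression is already a function of $\det g$ alone, and because residual representations of $\GL_2$ at such a simple pole are known to be scalar multiples of $\chi \circ \det$, matching constant terms shows that $\Res_{s=1/2} \Eis(\hat f(s,\chi,\chi))(g)$ itself equals $c_\F \Pairing{P(f)}{\chi \circ \det}\chi \circ \det(g)$.

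Finally, on the critical line the substitution $s = iy$ transforms $ds/(2\pi i)$ into $dy/(2\pi)$. Normal convergence in $g$ of the resulting integral follows from the unitarity of $\Intw$ on $\Re s = 0$ (property (2)) combined with the rapid decay of $\hat f$ and $\Intw \hat f$ in $y$ (property (3)); the sum over $\xi$ is finite because $f$ is $\gp{K}_\fin$-finite. The main technical obstacle is the horizontal vanishing step: on segments deep inside the critical strip, $\Eis(\hat f)$ is accessible only through its analytic continuation, so the required uniform bounds cannot come from the defining series and must instead be extracted from the constant term decomposition, leaning crucially on property (3).
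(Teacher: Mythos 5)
Your overall skeleton — shift the contour from $\Re s = c$ to $\Re s = 0$, collect the residues at $s = 1/2$ for those $\xi$ with $\xi^2 = \omega$, and identify the residue of $\Eis$ with the residue of its constant term because residual representations are the one-dimensional $\chi\circ\det$ — is exactly the route the paper takes, and your residue analysis is correct. But the step you flag at the end as "the main technical obstacle" is precisely the point where the proposal stops short, and it is in fact the main content of Section~3 of the paper.

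Your argument for the horizontal vanishing runs: $\hat f$ decays rapidly in $|\Im s|$; property (3) gives decay for $\Intw\hat f$, hence for the constant term; and "the pseudo-Eisenstein difference $\Eis(\hat f)-\eisCst(\hat f)$ is rapidly decreasing in the Siegel domain and therefore also drops out." The last clause does not follow. For a \emph{fixed} $s$, $\Eis-\eisCst$ is indeed rapidly decreasing in the cusp, but what is needed for closing the contour and for normal convergence of the $\Re s = 0$ integral is a bound on $\sup_{g\in K}\norm[\Eis(\hat f(s,\xi,\omega\xi^{-1}))(g)]$ for compact $K$ that grows at most polynomially in $|\Im s|$, uniformly across the critical strip. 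In that strip $\Eis$ is given only by analytic continuation, and neither the rapid decay of $\hat f$ nor property (3) of $\Intw$ controls the size of the \emph{non-constant} Fourier coefficients uniformly in $s$. The paper obtains the missing bound by a quite different mechanism: it expands $\hat f$ in $\gp{K}$-isotypic components $\hat f(s,\xi,\omega\xi^{-1};\vec n)$, whose coefficients are rapidly decreasing in $(|\Im s|,\Norm[\vec n])$; it then bounds each flat-section $\gp{K}$-isotypic Eisenstein series $\Eis(s,\xi,\omega\xi^{-1};\vec n)$ on $K$ via the adelic Maass--Selberg relation (\ref{MSRel}) combined with the explicit local computations of the intertwining operator (Corollaries \ref{LocIntwCalCpx}, \ref{LocIntwCalReal}, Lemma \ref{LocIntwCalFin}) and Sobolev inequalities (Corollary \ref{SobBdContKiso} and the remark following it). Without this apparatus — or something of equivalent strength — the contour shift is not justified, so the proposal as written has a genuine gap at its central step rather than merely a deferred technicality.
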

\noindent We tentatively define
\begin{itemize}
	\item[(1)] $\ProjF_{iy,\xi,\omega\xi^{-1}} = \ProjF_{\pi_{iy,\xi,\omega\xi^{-1}}}: \Cl_c \to V_{iy,\xi,\omega\xi^{-1}}$ by
	$$ \ProjF_{iy,\xi,\omega\xi^{-1}}(\Pcare(f)) = \hat{f}(iy,\xi,\omega\xi^{-1}) + \Intw \hat{f}(-iy,\omega\xi^{-1},\xi), $$
	and verify readily that $\ProjF_{iy,\xi,\omega\xi^{-1}}(\Cl_c) \subset V_{iy,\xi,\omega\xi^{-1}}^{\infty}$.
	\item[(2)] $\FuncRA_{iy,\xi,\omega\xi^{-1}}: V_{iy,\xi,\omega\xi^{-1}}^{\infty} \to \Cont^{\infty}(\GL_2,\omega)$ by the analytic continuation of $\FuncRA_{s,\xi,\omega\xi^{-1}}: V_{s,\xi,\omega\xi^{-1}}^{\infty} \to \Cont^{\infty}(\GL_2,\omega)$ via flat sections $f_s$ with $f \in V_{0,\xi,\omega\xi^{-1}}^{\infty}$
	$$ \FuncRA_{s,\xi,\omega\xi^{-1}}(f_s)(g) = \Eis(f_s)(g) = \Eis(s,f)(g). $$
	\item[(3)] for $\chi^2=\omega$, $\ProjF_{\chi \circ \det}: \Cl_c \to V_{\chi}=\ag{C}e_{\chi}$ with $\pi_{\chi}(g).e_{\chi} = \chi(\det g) e_{\chi}$ by
	$$ \ProjF_{\chi \circ \det}(P(f)) = c_{\F}^{\frac{1}{2}} \Pairing{P(f)}{\chi \circ \det} e_{\chi}. $$
	\item[(4)] for $\chi^2=\omega$, $\FuncRA_{\chi \circ \det}: V_{\chi}^{\infty}=V_{\chi} \to \Cont^{\infty}(\GL_2,\omega)$ by
	$$ \FuncRA_{\chi \circ \det}(e_{\chi})(g) = c_{\F}^{\frac{1}{2}} \chi \circ \det(g). $$
\end{itemize}
Therefore Proposition \ref{FourInvIncompTheta} becomes a strong Fourier inversion formula for $\Cl_c$.
\begin{remark}
	By Rankin-Selberg unfolding, one can also verify for any $e \in V_{0,\xi,\omega\xi^{-1}}^{\infty}$
	$$ \Pairing{\Pcare(f)}{\Eis(iy,\xi,\omega\xi^{-1};e)} = \Pairing{\ProjF_{iy,\xi,\omega\xi^{-1}}(\Pcare(f))}{e_{iy}}_{iy,\xi,\omega\xi^{-1}}, $$
	with Eisenstein series for flat sections defined in Definition \ref{FlatEisDef}.
\label{IncompThetaAdj}
\end{remark}

	On the other hand, we compute the inner product of two incomplete theta series using the Rankin-Selberg unfolding technique:
\begin{align*}
	\Pairing{P(f_1)}{P(f_2)} &= \int_{\gp{N}(\F) \gp{Z}(\ag{A}) \backslash \GL_2(\ag{A})} P(f_1)_{\gp{N}}(g) \overline{f_2(g)} dg \\
	&= \int_{\gp{N}(\F) \gp{Z}(\ag{A}) \backslash \GL_2(\ag{A})} f_1(g) \overline{f_2(g)} dg + \int_{\gp{N}(\F) \gp{Z}(\ag{A}) \backslash \GL_2(\ag{A})} \Intw f_1(g) \overline{f_2(g)} dg,
\end{align*}
	$$ \text{with} \quad \Intw f_1(g) = \int_{\ag{A}} f_1(wn(x)g) dx = \sum_{\xi} \int_{\Re s = c} \Intw f_1(s,\xi,\omega\xi^{-1})(g) \frac{ds}{2\pi i}, \text{for } c \gg 1. $$
For the first summand, we apply Plancherel for Mellin transform on the unitary axis; for the second, we insert the Mellin inversion for $f_1$, interchange the order of integrals and shift the vertical integral on $s$ to the unitary axis. Thus we obtain
\begin{align*}
	\Pairing{P(f_1)}{P(f_2)} &= \sum_{\xi} \int_0^{\infty} \Pairing{\ProjF_{iy,\xi,\omega\xi^{-1}}(P(f_1))}{\ProjF_{iy,\xi,\omega\xi^{-1}}(P(f_2))}_{iy} \frac{dy}{2\pi} + \\
	&\quad \sum_{\chi: \chi^2=\omega} \Pairing{\ProjF_{\chi \circ \det}(P(f_1))}{\ProjF_{\chi \circ \det}(P(f_2))}_{\chi}.
\end{align*}
The right hand side is first interpreted in the Cauchy principal sense, then the usual sense of $\intL^1$ since we know that $\Intw(iy,\xi,\omega\xi^{-1})$ is unitary. Hence we established the Plancherel formula for $V$ and can extend its validity to $\intL^2(\GL_2,\omega)$ by density, i.e. (S1) for $\GL_2$.
\begin{remark}
	In the last contour shift in $s$, one should have studied the growth in $\norm[t]$ of $\Intw(\sigma+it,\xi,\omega\xi^{-1})$ for $0 \leq \sigma < 2$. In fact, Proposition \ref{FourInvIncompTheta} will follow from this study of growth as well as the growth of the derivative $\Intw'(it,\xi,\omega\xi^{-1})$. It should be possible to study both growth without explicitly computing them. But since the explicit computation will have other applications, we will just make it.
\end{remark}

\section{Fourier Inversion for Smooth Vectors}

	\subsection{Preliminaries}
	
	A striking difference between the theory for $\GL_2$ and the classical Fourier analysis is Proposition \ref{FourInvIncompTheta}, whose counterpart in the classical case is trivial. One obvious proof consists of shifting the integral from $\Re s=c \gg 1$ to $\Re s = 0$. To this end, we must bound $\Eis(\hat{f}(s,\xi,\omega\xi^{-1}))(g)$ in this region. One possible way consists of bounding the Eisenstein series induced from unitary flat sections. With these bounds, it turns out that we can also establish the normal convergence of the Fourier expansion of a smooth vector, and the most delicate part is the bound on the axis $\Re s = 0$.
	
	We give a family of projectors $\ProjP[\vec{n}] = \otimes_v' \ProjP[n_v]$ of $\gp{K}$-representations parametrized by $\vec{n}=(n_v)_v \in \oplus_{v \in \Place(\F)} \ag{Z}$ as follows:
\begin{itemize}
	\item[(1)] At $v \mid \infty$ a complex place: $n_v \geq 0$ and $\ProjP[n_v]$ is the projector onto the unitary irreducible representation of $\SU_2(\ag{C})$ on the space $\ag{C}[x,y]_{n_v}$ of homogeneous polynomials of degree $n_v$.
	\item[(2)] At $v \mid \infty$ a real place: $n_v \in \ag{Z}$ and $\ProjP[n_v]$ is the projector onto the character of $\SO_2(\ag{R})$ given by $\begin{pmatrix} \cos \alpha & \sin \alpha \\ -\sin \alpha & \cos \alpha \end{pmatrix} \mapsto e^{in_v \alpha}$.
	\item[(3)] At $v < \infty$: $n_v \geq 0$ and $\ProjP[n_v]$ is the projector onto the orthogonal complement of the space of $\gp{K}_v[n_v-1]$-invariant vectors in the space of $\gp{K}_v[n_v]$-invariant vectors.
\end{itemize}
	Note that $\ProjP[\vec{n}]$ can be realized as integrals on $\gp{K}$.
\begin{definition}
	(Notations) For any $\vec{N} \in \oplus_{v \in \Place(\F)} \ag{N}$, $\vec{n} \leq \vec{N}$ means $\norm[n_v] \leq N_v$ for all $v$. For any representation $\rpR$ of $\gp{K}$ with underlying Hilbert space $V_{\rpR}$, $\rpR[\vec{n}]$ means the sub-representation of $\gp{K}$ on the space $\ProjP[\vec{n}] V_{\rpR}$, and $v[\vec{n}]=\ProjP[\vec{n}]v$ for any $v \in V_{\rpR}$. We write $\rpR[\leq \vec{N}] = \oplus_{\vec{n} \leq \vec{N}} \rpR[\vec{n}]$.
\label{KProjParam}
\end{definition}
\noindent It is a fact that for any unitary irreducible $\pi$ of $\GL_2(\ag{A})$, $\pi[\vec{n}]$ is either $0$ or $\gp{K}$-irreducible, hence finite dimensional.

	We shall study the the continuity of $\FuncRA_{s,\xi,\omega\xi^{-1}}: V_{s,\xi,\omega\xi^{-1}}^{\infty} \to \Cont^{\infty}(\GL_2,\omega)$ for $s \in \ag{C}$. More precisely, we shall identify $V_{s,\xi,\omega\xi^{-1}}$ with $V_{0,\xi,\omega\xi^{-1}}$ via flat sections and estimate the smooth Eisenstein series in terms of $s$ and the $\gp{K}$-structure on $V_{0,\xi,\omega\xi^{-1}}$. In particular, we shall write $V_{s,\xi,\omega\xi^{-1}}^{\infty}$ resp. $V_{s,\xi,\omega\xi^{-1}}^{\fin}$ as flat sections coming from functions in $V_{0,\xi,\omega\xi^{-1}}^{\infty}$ resp. $V_{0,\xi,\omega\xi^{-1}}^{\fin}$. So for $f \in V_{0,\xi,\omega\xi^{-1}}^{\infty}$, we write the flat section as $f_s \in V_{s,\xi,\omega\xi^{-1}}^{\infty}$ and define
\begin{equation}
	\Eis(s,\xi,\omega\xi^{-1};f) = \Eis(f_s).
\label{FlatEisDef}
\end{equation}
	
\begin{definition}
	For any character $\chi$ of $\F_v^{\times}$ resp. of $\F^{\times} \backslash \ag{A}^{\times}$ trivial on $\F^{\times} \backslash \ag{A}^{(1)}$, we write $\mu=\mu(\chi) \in \ag{R}$ such that
	$$ \chi(r) = \norm[r]_v^{i\mu} \text{ for } r>0, v \mid \infty; \chi(\varpi_v) = \norm[\varpi_v]_v^{i\mu} \text{ for } v < \infty; \text{ resp. } \chi(t) = \norm[t]_{\ag{A}}^{i\mu}. $$
\label{DefMu}
\end{definition}

	For our purpose, the adelic Maass-Selberg relation is very important. Recall the truncation operator defined in \cite[(5.7)]{GJ79} and the corresponding Maass-Selberg formula \cite[(5.13)]{GJ79}, which specializing to the case $s=s_1=\bar{s}_2$ gives
\begin{align}
	\Norm[\Lambda^c \Eis(s,\xi,\omega\xi^{-1};f)]^2 &= \frac{1}{2\Re s}\left\{ \Norm[f]^2 c^{2\Re s} - \Norm[\Intw(s,\xi,\omega\xi^{-1})f]^2 c^{-2\Re s} \right\} \label{MSRel} \\
	&\quad + 1_{\omega^{-1}\xi^2(\ag{A}^{(1)})=1}\frac{2\Im \left( \Pairing{f}{\Intw(s,\xi,\omega\xi^{-1})f} c^{i(2\Im s + \mu(\omega^{-1}\xi^2))} \right)}{2\Im s + \mu(\omega^{-1}\xi^2)}, \nonumber
\end{align}
	where $f \in V_{0,\xi,\omega\xi^{-1}}^{\infty}$.
\begin{remark}
	Taking the Sobolev inequalities into account, (\ref{MSRel}) allows us to reduce the bounding of an Eisenstein series on a fixed compact subset to the estimation of the intertwining operator (and its logarithmic derivative in the case $\Re s=0$).
\end{remark}

	We shall use the (partial) Fourier transforms
\begin{equation}
	\begin{matrix}
	\Four[1]{\Phi}(x,y) = \int_{\ag{A}} \Phi(u,y) \psi(-ux) du, \quad \Four[2]{\Phi}(x,y) = \int_{\ag{A}} \Phi(x,v) \psi(-vy) dv; \\
	\Four{\Phi}=\Four[1]{\Four[2]{\Phi}}=\Four[2]{\Four[1]{\Phi}}; \\
	\widehat{\Phi}(\vec{x})=\Four[w^{-1}]{\Phi}(\vec{x}) = \Four{\Phi}(\vec{x}.w^{-1}), w=\begin{pmatrix} & -1 \\ 1 & \end{pmatrix};
	\end{matrix}
\label{FourDef}
\end{equation}
	as well as their local versions. Note that $\widehat{\widehat{\Phi}}=\Phi$. 
	
	We shall also use the following non-conventional local version of the intertwining operator. If $\Phi$ is a Schwartz function on $\ag{A}^2$, consider the following element in $V_{s,\xi,\omega\xi^{-1}}^{\infty}$
	$$ f_{\Phi}(s,\xi,\omega\xi^{-1};g) = \xi(\det g)\norm[\det g]_{\ag{A}}^{\frac{1}{2}+s} \int_{\ag{A}^{\times}} \Phi((0,t)g) \omega^{-1}\xi^2(t) \norm[t]_{\ag{A}}^{1+2s} d^{\times}t $$
	first defined for $\Re s > 0$ then meromorphically continued to $s \in \ag{C}$. We have by (\ref{IntwDef})
\begin{align*}
	\Intw f_{\Phi}(s,\xi,\omega\xi^{-1};g) &= \int_{\ag{A}} f_{\Phi}(s,\xi,\omega\xi^{-1};wn(x)g) dx \\
	&= \xi(\det g) \norm[\det g]_{\ag{A}}^{\frac{1}{2}+s} \int_{\ag{A}^{\times}} \Four[2]{R(g).\Phi}(t,0) \omega^{-1}\xi^2(t) \norm[t]_{\ag{A}}^{2s} d^{\times}t \\
	&= \xi(\det g) \norm[\det g]_{\ag{A}}^{\frac{1}{2}+s} \int_{\ag{A}^{\times}} \Four[1]{\Four[2]{R(g).\Phi}}(t,0) \omega\xi^{-2}(t) \norm[t]_{\ag{A}}^{1-2s} d^{\times}t.
\end{align*}
	Note that
\begin{align*}
	\Four[1]{\Four[2]{R(g).\Phi}}(t,0) &= \int_{\ag{A}^2} \Phi((u,v)g) \psi((u,v)w\begin{pmatrix} 0 \\ t \end{pmatrix}) du dv \\
	&= \norm[\det g]_{\ag{A}}^{-1} \int_{\ag{A}^2} \Phi(u,v) \psi((u,v)g^*w\begin{pmatrix} 0 \\ t(\det g)^{-1} \end{pmatrix}) du dv, \quad (g^*=(\det g) g^{-1}) \\
	&= \norm[\det g]_{\ag{A}}^{-1} \widehat{\Phi}((0,t(\det g)^{-1})g).
\end{align*}
	It follows that
\begin{align*}
	\Intw f_{\Phi}(s,\xi,\omega\xi^{-1};g) &= \omega\xi^{-1}(\det g)\norm[\det g]_{\ag{A}}^{\frac{1}{2}-s} \int_{\ag{A}^{\times}} \widehat{\Phi}((0,t)g) \omega\xi^{-2}(t)\norm[t]_{\ag{A}}^{1-2s} d^{\times}t \\
	&= f_{\widehat{\Phi}}(-s,\omega\xi^{-1},\xi;g).
\end{align*}
	Suggested by Tate's theory, it is convenient to introduce $\IntwR$ defined by
\begin{equation}
	\IntwR(s,\xi,\omega\xi^{-1}) = \frac{\Lambda(1+2s,\omega^{-1}\xi^2)}{\Lambda(1-2s,\omega\xi^{-2})} \Intw(s,\xi,\omega\xi^{-1}),
\label{IntwRel}
\end{equation}
	as well as its local version
\begin{align}
	\IntwR_v(s,\xi_v,\omega_v\xi_v^{-1}): &\frac{\xi_v(\det g)\norm[\det g]_v^{\frac{1}{2}+s} \int_{\F_v^{\times}} \Phi_v((0,t)g) \omega_v^{-1}\xi_v^2(t) \norm[t]_v^{1+2s} d^{\times}t}{L_v(1+2s,\omega_v^{-1}\xi_v^2)} \label{LocIntwRDef} \\
	&\mapsto \frac{\omega_v\xi_v^{-1}(\det g)\norm[\det g]_v^{\frac{1}{2}-s} \int_{\F_v^{\times}} \widehat{\Phi_v}((0,t)g) \omega_v\xi_v^{-2}(t)\norm[t]_v^{1-2s} d^{\times}t}{L_v(1-2s,\omega_v\xi_v^{-2})}. \nonumber
\end{align}
	We shall use in the sequel
\begin{equation}
	\Intw_v(s,\xi_v,\omega_v\xi_v^{-1}) = \frac{L_v(1-2s,\omega_v\xi^{-2})}{L_v(1+2s,\omega_v^{-1}\xi_v^2)} \IntwR_v(s,\xi_v,\omega_v\xi_v^{-1}).
\label{LocIntwDef}
\end{equation}
	
	Recall the modified Bessel functions of the second kind (Bessel-K functions, \cite[6.22 (7)]{Wat44})
\begin{equation}
	\BesselK_{\nu}(y) = \frac{1}{2} \int_0^{\infty} e^{-y(t+t^{-1})} t^{\nu} \frac{dt}{t} = \int_0^{\infty} e^{-y(t^2+t^{-2})} t^{2\nu} \frac{dt}{t}, y > 0, \nu \in \ag{C}.
\label{BesselK}
\end{equation}

	For simplicity of notations, we omit the subscript $v$ in the following three subsections where we compute the local intertwining operators explicitly on $\gp{K}$-isotypic vectors.

	\subsection{Local $\gp{K}$-isotypic theory: complex place}
	
		\subsubsection{Hamiltonian}
	
	Let $\ag{H}$ be the Hamiltonian over $\ag{R}$. It has a matrix realization in $\Mat_2(\ag{C})$ as
	$$ \ag{H} = \left\{ \begin{pmatrix} z_1 & z_2 \\ -\bar{z}_2 & \bar{z}_1 \end{pmatrix}: z_1,z_2 \in \ag{C} \right\}. $$
	As real smooth manifolds, we have the identification (polar decomposition $r^2=\norm[z_1]^2+\norm[z_2]^2$)
	$$ \ag{C}^2 \simeq \ag{H} \simeq \ag{R}_+ \times \ag{H}^1, (z_1,z_2) \leftrightarrow (r,\kappa), $$
	where we have written
	$$ \ag{H}^1 = \left\{ \begin{pmatrix} z_1 & z_2 \\ -\bar{z}_2 & \bar{z}_1 \end{pmatrix} \in \ag{H}: \norm[z_1]^2+\norm[z_2]^2=1 \right\} \simeq \SU_2(\ag{C}). $$
	As Haar measures on $\ag{H}$, we must have for some $c>0$,
	$ dz_1 dz_2 = c r^3 dr d\kappa $ where $d\kappa$ is the probability Haar measure on $\SU_2(\ag{C})$ and $dz_i$ are the normalized Tate measure on $\ag{C}$. To calculate $c$, we integrate against $e^{-\norm[z_1]^2-\norm[z_2]^2}$ to get
	$$ 4\pi^2=\int_{\ag{C}^2} e^{-\norm[z_1]^2-\norm[z_2]^2} dz_1 dz_2 = c \int_0^{\infty} e^{-r^2} r^3 dr = \frac{c}{2} \Rightarrow dz_1dz_2 = 8\pi^2 r^3dr d\kappa. $$
	
		\subsubsection{Spherical Harmonics}
		
	Consider the regular representation $\varrho=\rpL \times \rpR$ of $\ag{H}^1 \times \ag{H}^1$ on $\intL^2(\ag{H}^1,d\kappa)$ with $\rpL$ resp. $\rpR$ the left resp. right regular representation.	Let $\ag{C}[\ag{H}^1]$ be the space of functions on $\ag{H}^1$ expressible as the restriction of a polynomial $P \in \ag{C}[z_1,z_2,\bar{z}_1,\bar{z}_2]$. It is the subspace of $\ag{H}^1 \times \ag{H}^1$-finite vectors. Hence we have an algebraic decomposition
	$$ \ag{C}[\ag{H}^1] = \bigoplus_{n \in \ag{N}} V_n \text{ or } \varrho = \bigoplus_{n \in \ag{N}} \varrho_n $$
	where $\varrho_n = \rho_n \otimes \rho_n$ with $\rho_n$ the irreducible representation of $\SU_2(\ag{C})$ on the space $\ag{C}[X,Y]_n$ of homogeneous polynomials of degree $n$. Given an integer $n_0$, the subspace
	$$ \ag{C}[\ag{H}^1,n_0] = \left\{ P \in \ag{C}[\ag{H}^1]: P(\begin{pmatrix} e^{i\alpha} & \\ & e^{-i\alpha} \end{pmatrix} \kappa) = e^{in_0 \alpha} P(\kappa), \forall \alpha \in \ag{R}/2\pi\ag{Z}, \kappa \in \ag{H}^1 \right\} $$
	is a sub-representation $\rpR_{n_0}$ of $\rpR$, which decomposes as
	$$ \ag{C}[\ag{H}^1,n_0] = \bigoplus_{n \geq \norm[n_0], 2 \mid n-n_0} V_{n_0,n} \text{ or } \rpR_{n_0} = \bigoplus_{n \geq \norm[n_0], 2 \mid n-n_0} \rho_n. $$
	We denote its completion in $\intL^2(\ag{H}^1)$ by $\intL^2(\ag{H}^1,n_0)$. The elements of a basis of the complexified Lie algebra of $\SU_2(\ag{C})$
	$$ H=\begin{pmatrix} i & 0 \\ 0 & -i \end{pmatrix}, X_{\pm} = \pm \frac{1}{2} \begin{pmatrix} 0 & -1 \\ 1 & 0 \end{pmatrix} - \frac{i}{2} \begin{pmatrix} 0 & i \\ i & 0 \end{pmatrix} $$
	act on $\ag{C}[\ag{H}^1]$ as differential operators
	$$ \rpL(H) = -i (z_1 \partial_1 - \bar{z}_1 \bar{\partial}_1 + z_2 \partial_2 - \bar{z}_2 \bar{\partial}_2); $$
	$$ \rpR(H) = i (z_1 \partial_1 - \bar{z}_1 \bar{\partial}_1 - z_2 \partial_2 + \bar{z}_2 \bar{\partial}_2); $$
	$$ \rpR(X_+) = z_2 \partial_1 - \bar{z}_1 \bar{\partial}_2; \rpR(X_-) = z_1 \partial_2 - \bar{z}_2 \bar{\partial}_1; $$
	where $\partial_i$ resp. $\bar{\partial}_i$ is the partial differential with respect to $z_i$ resp. $\bar{z}_i$. Let the spherical harmonic $\tilde{e}_{n,k}^{n_0} \in V_{n_0,n}$ correspond to the monomial $X^{n-k}Y^k$ for $0 \leq k \leq n$. $\tilde{e}_{n,0}^{n_0}$ is determined up to scalar by
	$$ \rpL(H).\tilde{e}_{n,0}^{n_0} = -in_0 \tilde{e}_{n,0}^{n_0}, \quad \rpR(H).\tilde{e}_{n,0}^{n_0} = in \tilde{e}_{n,0}^{n_0}, \quad \rpR(X_-).\tilde{e}_{n,0}^{n_0} = 0. $$
	It is easy to verify that $ \tilde{e}_{n,0}^{n_0} = z_1^{\frac{n+n_0}{2}} \bar{z}_2^{\frac{n-n_0}{2}} $ satisfies the above equations. From the relations $\rpR(X_+).\tilde{e}_{n,k}^{n_0} = (n-k)\tilde{e}_{n,k+1}^{n_0}$, we deduce
\begin{equation}
	\tilde{e}_{n,k}^{n_0} = \binom{n}{k}^{-1} \sum_{j=0}^k (-1)^{k-j} \binom{\frac{n+n_0}{2}}{j} \binom{\frac{n-n_0}{2}}{k-j} z_1^{\frac{n+n_0}{2}-j} z_2^j \bar{z}_1^{k-j} \bar{z}_2^{\frac{n-n_0}{2}-(k-j)}.
\label{NNBasSU2}
\end{equation}
	The harmonics $\tilde{e}_{n,k}^{n_0}, 0 \leq k \leq n$ form an orthogonal basis of $V_{n_0,n}$ but not normal.

\begin{lemma}
	We have
	$$ \Norm[\tilde{e}_{n,k}^{n_0}]^2 = \int_{\ag{H}^1} \norm[\tilde{e}_{n,k}^{n_0}(\kappa)]^2 d\kappa = \frac{1}{n+1} \binom{n}{k}^{-1} \binom{n}{\frac{n-n_0}{2}}^{-1}. $$
	Consequently, an orthonormal basis of $(\rho_n,V_{n_0,n})$ is given by 
	$$ e_{n,k}^{n_0}=\sqrt{(n+1)\binom{n}{k}\binom{n}{\frac{n-n_0}{2}}}\tilde{e}_{n,k}^{n_0}. $$
\label{NBasSU2}
\end{lemma}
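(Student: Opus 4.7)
The plan is to first establish pairwise orthogonality of the $\tilde{e}_{n,k}^{n_0}$, so that only the diagonal norms matter, and then to compute each diagonal norm in two steps: a Lie-algebra recursion in $k$ reducing to the case $k=0$, followed by a direct Gaussian integral for the base case. Orthogonality is immediate from (\ref{NNBasSU2}): every monomial appearing in $\tilde{e}_{n,k}^{n_0}$ transforms under right multiplication by $\diag(e^{i\alpha},e^{-i\alpha})$ by the same character $e^{i(n-2k)\alpha}$, so vectors with different $k$ pair to zero by Schur orthogonality on the circle.

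For the recursion, I would invoke unitarity of $\rpR$ via the relation $\rpR(X_+)^{*}=\rpR(X_-)$, which follows because the conjugation of the complexified Lie algebra with respect to the compact real form $\mathfrak{su}_2$ sends $X_+$ to $-X_-$. Combined with the bracket $[X_+,X_-]=iH$ and the already-used actions of $H$ and $X_+$, a short induction on $k$ yields $\rpR(X_-).\tilde{e}_{n,k}^{n_0}=k\,\tilde{e}_{n,k-1}^{n_0}$. Then
$$ (n-k)^2\Norm[\tilde{e}_{n,k+1}^{n_0}]^2 = \Pairing{\rpR(X_-)\rpR(X_+)\tilde{e}_{n,k}^{n_0}}{\tilde{e}_{n,k}^{n_0}} = (n-k)(k+1)\Norm[\tilde{e}_{n,k}^{n_0}]^2, $$
and telescoping gives $\Norm[\tilde{e}_{n,k}^{n_0}]^2 = \binom{n}{k}^{-1}\Norm[\tilde{e}_{n,0}^{n_0}]^2$.

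For the base case, $\norm[\tilde{e}_{n,0}^{n_0}(\kappa)]^2 = \norm[z_1]^{n+n_0}\norm[z_2]^{n-n_0}$ is homogeneous of total degree $2n$ in the underlying real coordinates. Integrating against $e^{-\norm[z_1]^2-\norm[z_2]^2}$ over $\ag{C}^2$ and using the polar identity $dz_1dz_2 = 8\pi^2 r^3 dr\, d\kappa$ established in the text separates the integral into a radial Gaussian $\int_0^{\infty} e^{-r^2} r^{2n+3} dr$ times the angular factor $\Norm[\tilde{e}_{n,0}^{n_0}]^2$. Computing the same integral by Fubini as a product of two one-dimensional Tate-type integrals over $\ag{C}$ and equating gives $\Norm[\tilde{e}_{n,0}^{n_0}]^2 = (n+1)^{-1}\binom{n}{(n-n_0)/2}^{-1}$; the orthonormal basis follows by the obvious scalar renormalization. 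The one piece of bookkeeping demanding care is the identification $\rpR(X_+)^{*}=\rpR(X_-)$, sensitive to the factor of $i$ in the definition of $H$ and the resulting non-standard bracket $[X_+,X_-]=iH$; beyond that the argument is a mechanical calculation.
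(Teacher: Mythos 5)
Your argument is correct, and for the key reduction from general $k$ to $k=0$ it takes a genuinely different route from the paper. The paper transports the question to a model: since $V_{n_0,n}$ is irreducible and isomorphic to $\rho_n$ realized on $\ag{C}[X,Y]_n$, any intertwiner is a scalar multiple of an isometry by Schur's lemma, so ratios of norms of corresponding vectors are preserved, and the paper reads off $\Norm[X^{n-k}Y^k]^2/\Norm[X^n]^2 = \binom{n}{k}^{-1}$ from a Gaussian-integral realization of the inner product (the displayed integral there should be over $\ag{C}^2$ rather than $\ag{R}^2$ for that ratio to come out as stated, but the intended computation and the final answer are right). You instead work entirely inside $\intL^2(\ag{H}^1)$ via the raising/lowering calculus: the adjoint relation $\rpR(X_+)^{*}=\rpR(X_-)$ — correctly traced to the $\mathfrak{su}_2$-conjugation $X_+\mapsto -X_-$, with the non-standard bracket $[X_+,X_-]=iH$ caused by the extra factor of $i$ in the definition of $H$ — together with the paper's normalization $\rpR(X_+)\tilde{e}_{n,k}^{n_0}=(n-k)\tilde{e}_{n,k+1}^{n_0}$ and the standard $\mathfrak{sl}_2$ induction giving $\rpR(X_-)\tilde{e}_{n,k}^{n_0}=k\tilde{e}_{n,k-1}^{n_0}$ produce $(n-k)\Norm[\tilde{e}_{n,k+1}^{n_0}]^2=(k+1)\Norm[\tilde{e}_{n,k}^{n_0}]^2$, which telescopes to $\binom{n}{k}^{-1}\Norm[\tilde{e}_{n,0}^{n_0}]^2$. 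You also supply an explicit orthogonality argument (distinct $\rpR(H)$-weights $i(n-2k)$), which the paper asserts but does not prove. The base case $k=0$ is handled identically in both: the same $\ag{C}^2$ Gaussian is computed once by Fubini and once in polar coordinates via $dz_1dz_2=8\pi^2 r^3\,dr\,d\kappa$. Your route is self-contained and avoids committing to a particular inner-product model of $\rho_n$; the paper's is marginally shorter if one already trusts the Gaussian model.
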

\begin{proof}
	Since $V_{n_0,n}$ is isomorphic to the standard representation $\rho_n$, we have
	$$ \frac{\Norm[\tilde{e}_{n,k}^{n_0}]^2}{\Norm[\tilde{e}_{n,0}^{n_0}]^2} = \frac{\Norm[X^{n-k}Y^k]_{\rho_n}^2}{\Norm[X^n]_{\rho_n}^2} = \frac{\int_{\ag{R}^2} e^{-x^2-y^2}x^{2(n-k)}y^{2k} dx dy}{\int_{\ag{R}^2} e^{-x^2-y^2}x^{2n} dx dy} = \binom{n}{k}^{-1}. $$
	We have two ways to calculate the integral
\begin{align*}
	\int_{\ag{C}^2} e^{-\norm[z_1]^2-\norm[z_2]^2} \norm[z_1]^{n+n_0} \norm[z_2]^{n-n_0} dz_1 dz_2 &= 4(2\pi)^2 \int_0^{\infty} e^{-r^2} r^{n+n_0+1} dr \int_0^{\infty} e^{-r^2} r^{n-n_0+1} dr \\
	&= (2\pi)^2 \left( \frac{n+n_0}{2} \right)! \left( \frac{n-n_0}{2} \right)!;
 \end{align*}
 	$$ \int_{\ag{C}^2} e^{-\norm[z_1]^2-\norm[z_2]^2} \norm[z_1]^{n+n_0} \norm[z_2]^{n-n_0} dz_1 dz_2 = 8\pi^2 \Norm[\tilde{e}_{n,0}^{n_0}]^2 \int_0^{\infty} e^{-r^2} r^{2n+3} dr = 4\pi^2 (n+1)! \Norm[\tilde{e}_{n,0}^{n_0}]^2. $$
 	Comparing the right hand sides, we conclude.
\end{proof}

		\subsubsection{Intertwining Operator}
		
	Let $\Sch_P(\ag{H})=\Sch_P(\ag{R}^4)$ be the subspace of the Schwartz function space $\Sch(\ag{H})=\Sch(\ag{R}^4)$ spanned by
\begin{equation}
	P_{\vec{n}}(\vec{z}) = e^{-2\pi (\norm[z_1]^2 + \norm[z_2]^2)} z_1^{n_1} z_2^{n_2} \bar{z}_1^{\bar{n}_1} \bar{z}_2^{\bar{n}_2}, \vec{n} \in \ag{N}^4.
\label{BasSchPC}
\end{equation}
	It is naturally equipped with an action of $\gp{K}=\SU_2(\ag{C})$, every element of which is $\gp{K}$-finite. The explicit formula for the Fourier transform being given by
	$$ \widehat{\Phi}(z_1,z_2) = \int_{\ag{C}^2} \Phi(u_1,u_2) e^{-2\pi i (z_1u_2-z_2u_1+\bar{z}_1 \bar{u}_2 - \bar{z}_2 \bar{u}_1)} du_1 du_2, $$
	we deduce that the Fourier transform interchanges
	$$ \partial_1 \leftrightarrow -2\pi i z_2, \partial_2 \leftrightarrow 2\pi i z_1, \bar{\partial}_1 \leftrightarrow -2\pi i \bar{z}_2, \bar{\partial}_2 \leftrightarrow -2\pi i \bar{z}_1. $$
	Consequently, the Fourier transform $\Phi \mapsto \widehat{\Phi}$ leaves $\Sch_P(\ag{H})$ stable. Moreover, for any $\kappa \in \ag{H}^1$ we have
\begin{align}
	\widehat{\kappa.\Phi}(\vec{z}) &= \int_{\ag{C}^2} \kappa.\Phi(\vec{u}) \psi_{\ag{C}}(\vec{u}w^{-1}\vec{z}^T) d\vec{u} = \int_{\ag{C}^2} \Phi(\vec{u}) \psi_{\ag{C}}(\vec{u}\kappa^{-1}w^{-1}\vec{z}^T) d\vec{u} \nonumber \\
	&= \widehat{\Phi}(\vec{z}.(w\kappa^{-1}w^{-1})^T) = \widehat{\Phi}(\vec{z}.\kappa), \label{FourKmap}
\end{align}
	since $\kappa \mapsto w (\kappa^{-1})^T w^{-1}$ is the identity map on $\ag{H}^1$. Hence the Fourier transform is a $\gp{K}$-map on both $\Sch(\ag{H})$ and $\Sch_P(\ag{H})$.
\begin{lemma}
\begin{itemize}
	\item[(1)] The family of $\gp{K}$-maps $\Sch(\ag{H}) \to V_{s,\xi,\omega\xi^{-1}}^{\infty}$ resp. $\Sch_P(\ag{H}) \to V_{s,\xi,\omega\xi^{-1}}^{\fin}$ defined by the Tate integral (first for $\Re s > 0$ then analytically continued to $s \in \ag{C}$)
	$$ \Phi \mapsto f_{\Phi}(s,\xi,\omega\xi^{-1};g) := \xi(\det g) \norm[\det g]_{\ag{C}}^{\frac{1}{2}+s} \int_{\ag{C}^{\times}} \Phi((0,t)g) \omega^{-1}\xi^2(t) \norm[t]_{\ag{C}}^{1+2s} d^{\times}t $$
	are well-defined, meromorphic in $s$ and surjective for each fixed $s,\xi,\omega$.
	\item[(2)] If we replace $f_{\Phi}$ with $\tilde{f}_{\Phi}$ defined by
	$$ \Phi \mapsto \tilde{f}_{\Phi}(s,\xi,\omega\xi^{-1};g) := \frac{f_{\Phi}(s,\xi,\omega\xi^{-1};g)}{L_{\ag{C}}(1+2s, \omega^{-1}\xi^2)}, $$
	then it is holomorphic in $s \in \ag{C}$.
\end{itemize}
\label{IntwPropC}
\end{lemma}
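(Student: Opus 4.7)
The plan is to verify the three assertions in order: absolute convergence and the $\gp{B}$-transformation law for $\Re s > 0$, meromorphic continuation via Tate's local theory at the complex place, and surjectivity on $\gp{K}$-isotypic components.

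First, for $\Phi \in \Sch(\ag{H}) \simeq \Sch(\ag{C}^2)$ the section $t \mapsto \Phi((0,t)g)$ is Schwartz in $t \in \ag{C}$; together with unitarity of $\omega^{-1}\xi^2$ this yields absolute convergence of the Tate integral for $\Re s > -1/2$. Substituting $u=td$ in the integral for $b = \diag(a,d)n(x)$ gives $f_\Phi(s;bg) = \xi(a)(\omega\xi^{-1})(d)\norm[a/d]_{\ag{C}}^{1/2+s}f_\Phi(s;g)$, which is exactly the induction from the character of $\gp{B}(\ag{C})$ defining $V_{s,\xi,\omega\xi^{-1}}$. Smoothness in $g$ follows by differentiating under the integral using rapid decay of $\Phi$, and $\gp{K}$-equivariance $f_{\kappa.\Phi}(g) = f_\Phi(g\kappa)$ is automatic from associativity. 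For part (2), I would apply Tate's local theorem at the complex place to the auxiliary Schwartz function $\varphi_g(t) := \Phi((0,t)g)$ and the unitary character $\omega^{-1}\xi^2$: the zeta integral divided by $L_{\ag{C}}(s,\omega^{-1}\xi^2)$ is entire, and substituting $s \mapsto 1+2s$ proves the holomorphy of $\tilde f_\Phi$; the meromorphic form (1) then picks up poles only from the local $L$-factor. On a basis element $P_{\vec n}$ of $\Sch_P(\ag{H})$, polar coordinates $\vec z = r \cdot \kappa$ reduce the integral to a single $\Gamma$-integral in $r$ times a polynomial in the matrix entries of $g$, manifestly a $\gp{K}$-finite vector.

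For surjectivity, I invoke Frobenius reciprocity: the $\gp{K}$-restriction of $V_{s,\xi,\omega\xi^{-1}}$ decomposes as $\bigoplus_{n \ge \norm[n_0],\,2\mid n-n_0} V_{n_0,n}$ with multiplicity one, the parameter $n_0$ being determined by $\omega^{-1}\xi^2$ on the diagonal $U(1) \subset \SU_2(\ag{C})$. Using the diffeomorphism $\SU_2(\ag{C}) \xrightarrow{\sim} S^3$, $\kappa \mapsto (0,1)\kappa$, and writing $\ag{C}^2\setminus\{0\} \simeq \ag{R}_+ \times \SU_2(\ag{C})$ accordingly, for $\Phi(z) = \Phi_0(\norm[z])F(z/\norm[z])$ the restriction $f_\Phi|_{\gp{K}}$ factors as a radial Mellin transform of $\Phi_0$ times the $U(1)$-isotypic component of parameter $n_0$ along the Hopf-circle action. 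Since any $f \in V_{s,\xi,\omega\xi^{-1}}^\infty$ is itself $U(1)$-isotypic of parameter $n_0$ when viewed on $S^3$, we may take $F = f|_{\gp{K}}$ and any Schwartz $\Phi_0$ with non-zero Mellin transform at $1+2s$, yielding surjectivity onto $V^\infty$. Surjectivity onto $V^{\fin}$ is handled similarly with $F$ ranging over the explicit harmonics $e_{n,k}^{n_0}$, for which the radial Mellin transform on a Gaussian produces explicit $\Gamma$-factor ratios.

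The main obstacle is the explicit bookkeeping in the surjectivity step: one must propagate the harmonic $\tilde e_{n,k}^{n_0}$ through the determinant character $\xi(\det g)\norm[\det g]_{\ag{C}}^{1/2+s}$, the $d^\times t$-integration, and the polar change of variables while confirming that the resulting $\Gamma$-factor matches $L_{\ag{C}}(1+2s,\omega^{-1}\xi^2)$ (so that (2) yields the correct holomorphic normalisation) and is non-zero for generic $s$ (so that the chosen $\rho_n$-component is genuinely hit). This same computation is precisely what is needed for the explicit evaluation of $\IntwR_v$ on $\gp{K}$-isotypic vectors in the subsequent subsections, so the effort will be re-used rather than wasted.
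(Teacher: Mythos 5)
Your proposal follows essentially the same route as the paper: Tate's local zeta theory for parts (1)--(2), identification of the $\gp{K}$-restriction with $\intL^2(\ag{H}^1,n_0)$ resp.\ $\ag{C}[\ag{H}^1,n_0]$, and explicit construction of preimages in polar coordinates. The $\gp{K}$-finite case via Gaussians $P_{\vec n}$ matches the paper exactly, and noting that this same calculation feeds the later evaluation of $\IntwR_v$ is a good observation.

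There is, however, one gap in the surjectivity argument for $V_{s,\xi,\omega\xi^{-1}}^{\infty}$. You propose taking $\Phi(z)=\Phi_0(\norm[z])\,F(z/\norm[z])$ with $F=f|_{\gp{K}}$ (up to the $w$-twist) and ``any Schwartz $\Phi_0$ with non-zero Mellin transform at $1+2s$.'' This is not enough: for a general $f\in V_{0,\xi,\omega\xi^{-1}}^{\infty}$, the angular factor $F$ on $\ag{H}^1\simeq S^3$ does \emph{not} extend to a smooth function at the origin of $\ag{H}\simeq\ag{R}^4$, so the product $\Phi_0(\norm[z])F(z/\norm[z])$ need not lie in $\Sch(\ag{H})$ unless $\Phi_0$ vanishes to infinite order at $r=0$. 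The paper handles this by taking $\Phi_0(r)=e^{-a(r^2+r^{-2})}$, whose flatness at $r=0$ smooths out the angular singularity; it then also notes (and this is the second small subtlety you skip) that since the resulting radial Mellin factor $K_{\ag{C},a}$ is a Bessel-$\mathcal K$ function which can vanish at isolated points, one must vary $a$ in a small interval to guarantee non-vanishing at the given $s$. With those two clarifications your argument matches the paper's.
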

\begin{proof}
	(1) It is clear from the formula that these are $\gp{K}$-maps. Since the smooth resp. $\gp{K}$-finite structure of $V_{s,\xi,\omega\xi^{-1}}$ is the same as the one of $V_{0,\xi,\omega\xi^{-1}}$ via flat sections, which depends in turn only on its restriction to $\gp{K}$, and since $\Sch(\ag{H})$ is stable under the Lie algebra of $\gp{K}$, the map is well-defined. The meromorphic continuation as well as (2) are given by the theory of Tate's zeta integrals. If $\omega^{-1}\xi^2(e^{i\alpha})=e^{in_0\alpha}$ for some $n_0 \in \ag{Z}$, then it is clear that
	$$ \Res_{\gp{K}}^{\GL_2(\ag{C})} V_{s,\xi,\omega\xi^{-1}} = \intL^2(\ag{H}^1,n_0) \text{ resp. } \Res_{\gp{K}}^{\GL_2(\ag{C})} V_{s,\xi,\omega\xi^{-1}}^{\fin} = \ag{C}[\ag{H}^1,n_0]. $$
	For $\tilde{e}_{n,0}^{n_0}$ (\ref{NNBasSU2}), we take $P=(-1)^{\frac{n-n_0}{2}}P_{\vec{n}}$ (\ref{BasSchPC}) with $\vec{n}=(\frac{n-n_0}{2},0,0,\frac{n+n_0}{2})$ such that
\begin{equation}
	P(\begin{pmatrix} e^{i\alpha} & \\ & e^{-i\alpha} \end{pmatrix}\kappa) = e^{-in_0\alpha}P(\kappa), P(w^{-1}\kappa) = e^{-2\pi} \tilde{e}_{n,0}^{n_0}(\kappa), \forall \kappa \in \SU_2(\ag{C}).
\label{FinSchSecC}
\end{equation}
	The surjectivity in the $\gp{K}$-finite case follows from
\begin{align*}
	f_{P}(s,\xi,\omega\xi^{-1};\kappa) &= \int_{\ag{C}^{\times}} P((0,t)\kappa) \omega^{-1}\xi^2(t) \norm[t]_{\ag{C}}^{1+2s} d^{\times}t \\
	&= \frac{2}{\pi} \int_0^{\infty} \int_0^{2\pi} P(\begin{pmatrix} r & 0 \\ 0 & r \end{pmatrix} \begin{pmatrix} e^{i\alpha} & \\ & e^{-i\alpha} \end{pmatrix}w^{-1}\kappa) e^{in_0\alpha} \omega^{-1}\xi^2(r) r^{2+4s} \frac{dr}{r} d\alpha \\
	&= 4 \int_0^{\infty} e^{-2\pi r^2} \omega^{-1}\xi^2(r) r^{2+4s+n} \frac{dr}{r} \cdot \tilde{e}_{n,0}^{n_0}(\kappa) = \Gamma_{\ag{C}}(1+2s+i\mu(\omega^{-1}\xi^2)+\frac{n}{2}) \tilde{e}_{n,0}^{n_0}(\kappa).
\end{align*}
	For $f \in V_{0,\xi,\omega\xi^{-1}}^{\infty}$, we take $P=P_a$ for some $a>0$ defined in the polar coordinates by
	$$ P_a(r\kappa) = e^{-a(r^2+r^{-2})} f(w\kappa), r \in \ag{R}_+, \kappa \in \ag{H}^1. $$
	In other words, $P_a(h) = e^{-a(\Norm[h]^2+\Norm[h]^{-2})} f(wh/\Norm[h])$ where $h \mapsto \Norm[h]^2$ is the reduced norm in $\ag{H}$. With this expression, it is easy to verify $P_a \in \Sch(\ag{H})$. The surjectivity in the smooth case follows from
\begin{align*}
	f_{P}(s,\xi,\omega\xi^{-1};\kappa) &= \int_{\ag{C}^{\times}} P_a((0,t)\kappa) \omega^{-1}\xi^2(t) \norm[t]_{\ag{C}}^{1+2s} d^{\times}t \\
	&= 4 \int_0^{\infty} e^{-a (r^2+r^{-2})} \omega^{-1}\xi^2(r) r^{2+4s} \frac{dr}{r} \cdot f(\kappa),
\end{align*}
	and the fact that we can choose $a \in [1,2)$ such that
	$$ K_{\ag{C},a}(1+2s+i\mu(\omega^{-1}\xi^2)) := 4 \int_0^{\infty} e^{-a (r^2+r^{-2})} \omega^{-1}\xi^2(r) r^{2+4s} \frac{dr}{r} $$
	is non-vanishing.
\end{proof}
\begin{remark}
	We can write $K_{\ag{C},a}$ in terms of the Bessel K-functions (\ref{BesselK}) as
	$$ K_{\ag{C},a}(1+2s+i\mu(\omega^{-1}\xi^2)) = 4 \BesselK_{1+2s+i\mu(\omega^{-1}\xi^2)}(a). $$
\end{remark}
	If $\omega^{-1}\xi^2(e^{i\alpha})=e^{in_0\alpha}$ for some $n_0 \in \ag{Z}$, then $e_{n,k}^{n_0}$ ((\ref{NNBasSU2}) and Lemma \ref{NBasSU2}) gives rise to a flat section of $\pi_{s,\xi}$ determined by
	$$ e(s,\xi,\omega\xi^{-1};n,k;\kappa) = e_{n,k}^{n_0}(\kappa), \kappa \in \SU_2(\ag{C}). $$
\begin{corollary}
\begin{itemize}
	\item[(1)] The effect of $\IntwR(s,\xi,\omega\xi^{-1})$ (\ref{LocIntwRDef}) on the $\gp{K}$-finite flat sections is given by
	$$ \IntwR e(s,\xi,\omega\xi^{-1};n,k) = \mu(s,\xi,\omega\xi^{-1};n) e(-s,\omega\xi^{-1},\xi;n,k), \text{ with} $$
	$$ \mu(s,\xi,\omega\xi^{-1};n) = \frac{\Gamma_{\ag{C}}(1+2s+i\mu(\omega^{-1}\xi^2)+\frac{\norm[n_0]}{2})}{\Gamma_{\ag{C}}(1-2s-i\mu(\omega^{-1}\xi^2)+\frac{\norm[n_0]}{2})} \frac{\Gamma_{\ag{C}}(1-2s-i\mu(\omega^{-1}\xi^2)+\frac{n}{2})}{i^{n_0} \Gamma_{\ag{C}}(1+2s+i\mu(\omega^{-1}\xi^2)+\frac{n}{2})}, $$
	where $n_0 \in \ag{Z}$ is determined by $\omega^{-1}\xi^2(e^{i\alpha})=e^{in_0\alpha}$.
	\item[(2)] We have for $y \in \ag{R}$, $\norm[\mu(iy,\xi,\omega\xi^{-1};n)]=1$ and
	$$ \extnorm{\mu'(iy,\xi,\omega\xi^{-1};n)} \leq \left\{ \begin{matrix} 0 & \text{if } n=\norm[n_0] \\ 4 \left( \frac{2}{\norm[n_0]+2} + \log \frac{n}{\norm[n_0]+2} \right) & \text{if } n \geq \norm[n_0]+2. \end{matrix} \right. $$
\end{itemize}
\label{LocIntwCalCpx}
\end{corollary}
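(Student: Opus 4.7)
The plan is to reduce part (1) to the computation of a single scalar via Schur's lemma, and then to evaluate that scalar by applying Lemma \ref{IntwPropC} to an explicit Schwartz realization. Since $\IntwR(s,\xi,\omega\xi^{-1})$ is $\gp{K}$-equivariant and each $\gp{K}$-isotypic component $V_{n_0,n} \subset V_{s,\xi,\omega\xi^{-1}}^{\fin}$ is $\gp{K}$-irreducible of type $\rho_n$ (Lemma \ref{NBasSU2}), Schur's lemma forces it to act by a scalar between $V_{n_0,n}$ and $V_{-n_0,n} \subset V_{-s,\omega\xi^{-1},\xi}^{\fin}$ once both are identified with $\rho_n$ through the orthonormal bases $e_{n,k}^{\pm n_0} \leftrightarrow X^{n-k}Y^k$. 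Thus $\IntwR e(s,\xi,\omega\xi^{-1};n,k) = \mu_n\, e(-s,\omega\xi^{-1},\xi;n,k)$ for a scalar $\mu_n=\mu(s,\xi,\omega\xi^{-1};n)$ independent of $k$, and it suffices to evaluate $\mu_n$ on the highest-weight vector $\tilde e_{n,0}^{n_0}$.

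To do so, take the Schwartz function $P=(-1)^{(n-n_0)/2}P_{\vec n} \in \Sch_P(\ag{H})$ with $\vec n=(\frac{n-n_0}{2},0,0,\frac{n+n_0}{2})$ from the proof of Lemma \ref{IntwPropC}, for which $f_P(s,\xi,\omega\xi^{-1};\kappa) = \Gamma_{\ag{C}}(1+2s+i\mu(\omega^{-1}\xi^2)+n/2)\,\tilde e_{n,0}^{n_0}(\kappa)$. Since $P$ is a self-dual Gaussian times a single monomial in $v_1,\bar v_2$, applying the interchange rules $\partial_j \leftrightarrow \mp 2\pi i z_k$ shows that $\widehat P$ is again a Gaussian times a monomial, but with the exponents on $v_1$ and $\bar v_2$ swapped, multiplied by an explicit power of $i$. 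Hence $\widehat P$ is a scalar multiple of the analogous Schwartz function $P^{(-n_0)}$ attached to the swapped triple $(-s,\omega\xi^{-1},\xi)$ (whose $\gp{K}$-parameter is $-n_0$ and for which $\mu(\omega\xi^{-2}) = -\mu(\omega^{-1}\xi^2)$). A second application of Lemma \ref{IntwPropC} in this swapped setup evaluates $f_{\widehat P}(-s,\omega\xi^{-1},\xi;\kappa)$, and the identity $\Intw f_P(s,\xi,\omega\xi^{-1};g)=f_{\widehat P}(-s,\omega\xi^{-1},\xi;g)$ established just before (\ref{LocIntwRDef}) yields $\Intw \tilde e_{n,0}^{n_0}$; multiplying by the $L$-factor ratio in (\ref{LocIntwRDef}) then gives $\IntwR \tilde e_{n,0}^{n_0}$. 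The $\tilde e \to e$ normalization constants on the two sides cancel because $\binom{n}{(n-n_0)/2}=\binom{n}{(n+n_0)/2}$, producing the stated formula for $\mu_n$.

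For part (2), on the axis $s=iy$ the four arguments of $\Gamma_{\ag{C}}$ in $\mu_n$ pair as complex conjugates (as $s \mapsto -s$ flips the imaginary part of each), so $|\Gamma_{\ag{C}}(\bar z)|=|\Gamma_{\ag{C}}(z)|$ and $|i^{n_0}|=1$ give $|\mu(iy,\xi,\omega\xi^{-1};n)|=1$. When $n=|n_0|$ the two Gamma ratios are reciprocals and $\mu_n$ is constant in $s$, so $\mu'=0$. When $n \geq |n_0|+2$, differentiate $\log\mu_n$ in $s$ and use the classical telescoping $(\log\Gamma)'(z+K)-(\log\Gamma)'(z)=\sum_{j=0}^{K-1}1/(z+j)$ with $K=(n-|n_0|)/2$; pairing complex-conjugate summands at $s=iy$ gives
$$ \left.\frac{d}{ds}\log\mu_n\right|_{s=iy} = -4\sum_{j=0}^{(n-|n_0|)/2-1}\frac{1+|n_0|/2+j}{(1+|n_0|/2+j)^2+(2y+\mu(\omega^{-1}\xi^2))^2}. $$
Dropping the nonnegative $(2y+\mu(\omega^{-1}\xi^2))^2$ term in each denominator and comparing the harmonic tail $\sum_{k=|n_0|/2+1}^{n/2} 1/k$ with $1/(|n_0|/2+1)+\int_{|n_0|/2+1}^{n/2} dx/x$ produces $|\mu'(iy)| = |\mu_n(iy)|\cdot|(\log\mu_n)'(iy)| \leq 4\bigl(\frac{2}{|n_0|+2}+\log\frac{n}{|n_0|+2}\bigr)$.

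The main obstacle is the sign bookkeeping in the Fourier-transform step: the phase $i^{n_0}$ in the final formula for $\mu_n$ arises only after correctly combining the $(-1)^{(n \pm n_0)/2}$ prefactors built into $P$ and $P^{(-n_0)}$, the power of $i$ produced by the repeated differentiation of the Gaussian, and the $w^{-1}$-twist in the definition of $\widehat{\,\cdot\,}$.
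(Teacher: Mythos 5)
Your proposal is correct and follows essentially the same route as the paper: reduce to the highest-weight vector by $\gp{K}$-equivariance (the paper phrases the Schur-lemma step as ``generate $(n,k)$'s from $(n,0)$ with differentials of $\gp{K}$''), compute $\widehat{P}$ via the $\partial_j \leftrightarrow \mp 2\pi i z_k$ rules to get the $i^{-n_0}$ phase, transport through $\Intw f_P = f_{\widehat P}$ and the $L$-factor in \eqref{LocIntwRDef}, and prove (2) by the same telescoping of the $\Gamma_{\ag{C}}$-ratio into a finite product whose logarithmic derivative is bounded by the harmonic-tail estimate. The only cosmetic slips are the stray notation $v_1,\bar v_2$ (should be $z_1,\bar z_2$) and the unresolved but correctly flagged phase bookkeeping, which the paper carries out explicitly.
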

\begin{proof}
	$P$ associated with $\tilde{e}_{n,0}^{n_0}$ given by (\ref{FinSchSecC}) satisfies
	$$ \widehat{P}  = \frac{(-1)^{\frac{n-n_0}{2}} \bar{\partial}_1^{\frac{n+n_0}{2}} \partial_2^{\frac{n-n_0}{2}} P_{\vec{0}}}{(2\pi i)^{\frac{n-n_0}{2}} (-2\pi i)^{\frac{n+n_0}{2}}} = (-1)^{\frac{n+n_0}{2}} i^{-n_0} z_1^{\frac{n+n_0}{2}} \bar{z}_2^{\frac{n-n_0}{2}} P_{\vec{0}}, $$
	which is associated with $i^{-n_0}\tilde{e}_{n,0}^{-n_0}$ by (\ref{FinSchSecC}). Using the formulas given by the lemma, we get
\begin{align*}
	\Intw \tilde{e}(s,\xi,\omega\xi^{-1};n,0) &= \frac{\Intw f_P(s,\xi,\omega\xi^{-1})}{\Gamma_{\ag{C}}(1+2s+i\mu(\omega^{-1}\xi^2)+\frac{n}{2})} = \frac{f_{\widehat{P}}(-s,\omega\xi^{-1},\xi)}{ \Gamma_{\ag{C}}(1+2s+i\mu(\omega^{-1}\xi^2)+\frac{n}{2})} \\
	&= \frac{\Gamma_{\ag{C}}(1-2s+i\mu(\omega\xi^{-2})+\frac{n}{2})}{ i^{n_0} \Gamma_{\ag{C}}(1+2s+i\mu(\omega^{-1}\xi^2)+\frac{n}{2})} \tilde{e}(-s,\omega\xi^{-1},\xi;n,0).
\end{align*}
	Since $\Intw$ is a $\gp{K}$-map, we can generate $(n,k)$'s from $(n,0)$ with differentials of $\gp{K}$ in the same way on both sides and conclude for (1). Now writing $\mu=\mu(\omega^{-1}\xi^2)$ for simplicity, we deduce
	$$ \mu(iy,\xi,\omega\xi^{-1};n) = \frac{1}{i^{n_0}} \prod_{k=0}^{\frac{n-\norm[n_0]}{2}-1} \frac{1-i(2y+\mu)+\frac{\norm[n_0]}{2}+k}{1+i(2y+\mu)+\frac{\norm[n_0]}{2}+k}; $$
	$$ \frac{\mu'(iy,\xi,\omega\xi^{-1};n)}{\mu(iy,\xi,\omega\xi^{-1};n)} = -4 \sum_{k=0}^{\frac{n-\norm[n_0]}{2}-1} \frac{1+\frac{\norm[n_0]}{2}+k}{(2y+\mu)^2+(1+\frac{\norm[n_0]}{2}+k)^2}. $$
	The asserted bound in (2) then becomes obvious.
\end{proof}

	\subsection{Local $\gp{K}$-isotypic theory: real place}
	
	The real case is similar to and much simpler than the complex case. $\ag{C}$ as an $\ag{R}$-algebra has a matrix realization in $\Mat_2(\ag{R})$ as
	$$ \ag{C} = \left\{ \begin{pmatrix} x_1 & x_2 \\ -x_2 & x_1 \end{pmatrix}: x_1,x_2 \in \ag{R} \right\}. $$
	As real smooth manifolds, we have the identification (polar decomposition $r^2=\norm[x_1]^2+\norm[x_2]^2$)
	$$ \ag{R}^2 \simeq \ag{C} \simeq \ag{R}_+ \times \ag{C}^1, (x_1,x_2) \leftrightarrow (r,\kappa), $$
	where we have written
	$$ \ag{C}^1 = \left\{ \begin{pmatrix} x_1 & x_2 \\ -x_2 & x_1 \end{pmatrix} \in \ag{C}: \norm[x_1]^2+\norm[x_2]^2=1 \right\} \simeq \SO_2(\ag{R}). $$
	As Haar measures on $\ag{C}$, we have for $d\kappa$ the probability Haar measure on $\SO_2(\ag{R})$ the relation
	$$ dx_1dx_2 = 2\pi rdr d\kappa. $$
	Consider the regular representation $\varrho=\rpL \times \rpR$ of $\ag{C}^1 \times \ag{C}^1$ on $\intL^2(\ag{C}^1,d\kappa)$. Let $\ag{C}[\ag{C}^1]$ be the space of functions on $\ag{C}^1$ expressible as the restriction of a polynomial $P \in \ag{C}[x_1,x_2]$. It is the subspace of $\ag{C}^1 \times \ag{C}^1$-finite vectors. Hence we have an algebraic decomposition
	$$ \ag{C}[\ag{C}^1] = \bigoplus_{n \in \ag{Z}} V_n \text{ or } \varrho = \bigoplus_{n \in \ag{Z}} \varrho_n $$
	where $\varrho_n = \rho_n \otimes \rho_n$ with $\rho_n$ the irreducible representation of $\SO_2(\ag{R})$ given by the character
	$$ \begin{pmatrix} e^{i\alpha} & \\ & e^{i\alpha} \end{pmatrix} \mapsto e^{in\alpha} .$$
	Given $n_0 \in \{0,1\}$, the subspace
	$$ \ag{C}[\ag{C}^1,n_0] = \left\{ P \in \ag{C}[\ag{H}^1]: P(\begin{pmatrix} -1 & \\ & -1 \end{pmatrix} \kappa) = (-1)^{n_0} P(\kappa), \forall \kappa \in \ag{H}^1 \right\} $$
	is a sub-representation $\rpR_{n_0}$ of $\rpR$, which decomposes as
	$$ \ag{C}[\ag{C}^1,n_0] = \bigoplus_{2 \mid n-n_0} V_{n_0,n} \text{ or } \rpR_{n_0} = \bigoplus_{2 \mid n-n_0} \rho_n. $$
	We denote its completion in $\intL^2(\ag{C}^1)$ by $\intL^2(\ag{C}^1,n_0)$. $V_{n_0,n}$ is one dimensional with a normalized basis element
\begin{equation}
	e_n(z) = z^n \text{ if } n \geq 0; \bar{z}^{-n} \text{ if } n < 0.
\label{NBasSO2}
\end{equation}
	Let $\Sch_P(\ag{C})=\Sch_P(\ag{R}^2)$ be the subspace of the Schwartz function space $\Sch(\ag{C})=\Sch(\ag{R}^2)$ spanned by
\begin{equation}
	P_{n}(z) = e^{-\pi \norm[z]^2} z^{\frac{\norm[n]+n}{2}} \bar{z}^{\frac{\norm[n]-n}{2}}, n \in \ag{Z}.
\label{BasSchPR}
\end{equation}
	It is naturally equipped with an action of $\gp{K}=\SO_2(\ag{R})$, every element of which is $\gp{K}$-finite. The explicit formula for the Fourier transform being given by
	$$ \widehat{\Phi}(x_1,x_2) = \int_{\ag{R}^2} \Phi(u_1,u_2) e^{-2\pi i (x_1u_2-x_2u_1)} du_1 du_2, \text{ i.e.,} $$
	$$ \widehat{\Phi}(z) = \int_{\ag{R}^2} \Phi(u) e^{-\pi(u\bar{z}-\bar{u}z)} du, $$
	we deduce that the Fourier transform interchanges
	$$ \partial \leftrightarrow \pi \bar{z}, \bar{\partial} \leftrightarrow -\pi z. $$
	Consequently, the Fourier transform $\Phi \mapsto \widehat{\Phi}$ leaves $\Sch_P(\ag{C})$ stable. Moreover, for any $\kappa \in \ag{C}^1$ we have
\begin{align}
	\widehat{\kappa.\Phi}(\vec{x}) &= \int_{\ag{R}^2} \kappa.\Phi(\vec{u}) \psi_{\ag{R}}(\vec{u}w^{-1}\vec{x}^T) d\vec{u} = \int_{\ag{R}^2} \Phi(\vec{u}) \psi_{\ag{R}}(\vec{u}\kappa^{-1}w^{-1}\vec{x}^T) d\vec{u} \nonumber \\
	&= \widehat{\Phi}(\vec{x}.(w\kappa^{-1}w^{-1})^T) = \widehat{\Phi}(\vec{x}.\kappa), \label{FourKmapR}
\end{align}
	since $\kappa \mapsto w (\kappa^{-1})^T w^{-1}$ is the identity map on $\ag{C}^1$. Hence the Fourier transform is a $\gp{K}$-map on both $\Sch(\ag{C})$ and $\Sch_P(\ag{C})$.
\begin{lemma}
\begin{itemize}
	\item[(1)] The family of $\gp{K}$-maps $\Sch(\ag{C}) \to V_{s,\xi,\omega\xi^{-1}}^{\infty}$ resp. $\Sch_P(\ag{C}) \to V_{s,\xi,\omega\xi^{-1}}^{\fin}$ defined by the Tate integral (first for $\Re s > 0$ then analytically continued to $s \in \ag{C}$)
	$$ \Phi \mapsto f_{\Phi}(s,\xi,\omega\xi^{-1};g) := \xi(\det g) \norm[\det g]_{\ag{R}}^{\frac{1}{2}+s} \int_{\ag{R}^{\times}} \Phi((0,t)g) \omega^{-1}\xi^2(t) \norm[t]_{\ag{R}}^{1+2s} d^{\times}t $$
	are well-defined, meromorphic in $s$ and surjective for each fixed $s,\xi,\omega$.
	\item[(2)] If we replace $f_{\Phi}$ with $\tilde{f}_{\Phi}$ defined by
	$$ \Phi \mapsto \tilde{f}_{\Phi}(s,\xi,\omega\xi^{-1};g) := \frac{f_{\Phi}(s,\xi,\omega\xi^{-1};g)}{L_{\ag{R}}(1+2s, \omega^{-1}\xi^2)}, $$
	then it is holomorphic in $s \in \ag{C}$.
\end{itemize}
\label{IntwPropR}
\end{lemma}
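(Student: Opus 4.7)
The plan is to follow the proof of Lemma \ref{IntwPropC} mutatis mutandis, simplified by the fact that at a real place the $\gp{K}$-types $V_{n_0,n}$ are one-dimensional, indexed by $n \in \ag{Z}$ with $n\equiv n_0 \pmod 2$, where $n_0 \in \{0,1\}$ is determined by $\omega^{-1}\xi^2(-1)=(-1)^{n_0}$. The $\gp{K}$-equivariance of $\Phi \mapsto f_\Phi$, the convergence of the Tate integral for $\Re s > 0$, its meromorphic continuation to $s \in \ag{C}$, and the fact that division by $L_{\ag{R}}(1+2s,\omega^{-1}\xi^2)$ in (2) produces an entire function, are all standard consequences of Tate's thesis applied to the radial Schwartz function.

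For the $\gp{K}$-finite surjectivity I would test $\Phi = P_n$ from (\ref{BasSchPR}) with $n \equiv n_0 \pmod 2$. Identifying $(0,1)\kappa_\alpha \in \ag{C}^1$ with the unit complex number $e^{i(\alpha+\pi/2)}$, one finds $P_n((0,\pm r)\kappa_\alpha) = (\pm 1)^n e^{-\pi r^2} r^{|n|} e^{in(\alpha+\pi/2)}$, so the sign sum collapses to $1+(-1)^{n+n_0}=2$ and the radial Tate integral yields a non-zero Gamma factor $\Gamma_{\ag{R}}(1+2s+i\mu(\omega^{-1}\xi^2)+|n|)$. Thus $f_{P_n}$ is a non-zero multiple of $e_n$, and the ratio with $L_{\ag{R}}(1+2s,\omega^{-1}\xi^2)$ is a non-vanishing polynomial in $s$. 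Letting $n$ range over $\{n \in \ag{Z}: n\equiv n_0 \pmod 2\}$ hits every one-dimensional $\gp{K}$-isotype.

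The main effort goes into the smooth case, where one must realize an arbitrary smooth flat section as $\tilde f_\Phi$ for some $\Phi \in \Sch(\ag{C})$. As in the complex case, I would take the ansatz
\begin{equation*}
P_a(r\kappa') = e^{-a(r^2+r^{-2})} f(w\kappa'),\quad r \in \ag{R}_+,\ \kappa' \in \ag{C}^1,
\end{equation*}
extended by $0$ at the origin. The rapid decay of $e^{-a(r^2+r^{-2})}$ at both endpoints together with the smoothness of $f$ on the compact $\ag{C}^1$ make $P_a \in \Sch(\ag{C})$. The same $\pm$-splitting as above, together with the transformation law $f(-g)=\omega(-1)f(g)$ and the identity $\omega^{-1}\xi^2(-1)\omega(-1)=\xi^2(-1)=1$, yields
\begin{equation*}
f_{P_a}(s,\xi,\omega\xi^{-1};\kappa) = 2\,\BesselK_{(1+2s+i\mu(\omega^{-1}\xi^2))/2}(a)\cdot f(\kappa),
\end{equation*}
and it remains to choose $a \in [1,2)$ at which this Bessel-K scalar is non-zero, which is possible because the zeros of $a \mapsto \BesselK_\nu(a)$ on $\ag{R}_+$ are discrete for each fixed $\nu$. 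This $e^{-a(r^2+r^{-2})}$ ansatz is, as in the complex case, the one non-routine ingredient of the proof.
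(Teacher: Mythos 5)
Your proposal matches the paper's proof essentially step for step: the same mutatis-mutandis reduction to the complex case, the same test vectors $P_n$ from (\ref{BasSchPR}) for the $\gp{K}$-finite case (up to the harmless normalization $(-i)^n$), and the same ansatz $P_a(r\kappa')=e^{-a(r^2+r^{-2})}f(w\kappa')$ with a non-vanishing $\BesselK_{\frac12+s+i\mu/2}(a)$ for the smooth case. Your extra remarks on the $\pm$-splitting and the identity $\omega^{-1}\xi^2(-1)\,\omega(-1)=1$ simply make explicit a step the paper leaves implicit.
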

\begin{proof}
	The proof being quite similar to that of Lemma \ref{IntwPropC}, we only record the essential steps. Assume $\omega\xi^{-1}(-1)=(-1)^{n_0}$ for some $n_0 \in \{0,1\}$. For $e_n$ (\ref{NBasSO2}), we take $P=(-i)^n P_n$ (\ref{BasSchPR}) such that
\begin{equation}
	P(\begin{pmatrix} -1 & \\ & -1 \end{pmatrix}\kappa) = (-1)^{n_0} P(\kappa), P(w^{-1}\kappa) = e^{-\pi} e_n(\kappa), \kappa \in \SO_2(\ag{R}).
\label{FinSchSecR}
\end{equation}
	The surjectivity in the $\gp{K}$-finite case follows from
\begin{align*}
	f_{P}(s,\xi,\omega\xi^{-1};\kappa) &= \int_{\ag{R}^{\times}} P((0,t)\kappa) \omega^{-1}\xi^2(t) \norm[t]_{\ag{R}}^{1+2s} d^{\times}t \\
	&= 2 \int_0^{\infty} P(\begin{pmatrix} r & 0 \\ 0 & r \end{pmatrix} w^{-1}\kappa) \omega^{-1}\xi^2(r) r^{1+2s} \frac{dr}{r} \\
	&= 2 \int_0^{\infty} e^{-\pi r^2} \omega^{-1}\xi^2(r) r^{1+2s+\norm[n]} \frac{dr}{r} \cdot e_n(\kappa) = \Gamma_{\ag{R}}(1+2s+i\mu(\omega^{-1}\xi^2)+\norm[n]) e_n(\kappa).
\end{align*}
	For $f \in V_{0,\xi,\omega\xi^{-1}}^{\infty}$, we take $P=P_a$ for some $a>0$ defined in the polar coordinates by
	$$ P_a(r\kappa) = e^{-a(r^2+r^{-2})} f(w\kappa), r \in \ag{R}_+, \kappa \in \ag{C}^1. $$
	In other words, $P_a(z) = e^{-a(\norm[z]^2+\norm[z]^{-2})} f(-iz/\norm[z])$. With this expression, it is easy to verify $P_a \in \Sch(\ag{C})$. The surjectivity in the smooth case follows from
\begin{align*}
	f_{P}(s,\xi,\omega\xi^{-1};\kappa) &= \int_{\ag{R}^{\times}} P_a((0,t)\kappa) \omega^{-1}\xi^2(t) \norm[t]_{\ag{R}}^{1+2s} d^{\times}t \\
	&= 2 \int_0^{\infty} e^{-a (r^2+r^{-2})} \omega^{-1}\xi^2(r) r^{1+2s} \frac{dr}{r} \cdot f(\kappa),
\end{align*}
	and the fact that we can choose $a \in [1,2)$ such that
	$$ K_{\ag{R},a}(1+2s+i\mu(\omega^{-1}\xi^2)) := 2 \int_0^{\infty} e^{-a (r^2+r^{-2})} \omega^{-1}\xi^2(r) r^{1+2s} \frac{dr}{r} $$
	is non-vanishing.
\end{proof}
\begin{remark}
	We can write $K_{\ag{R},a}$ in terms of the Bessel K-functions (\ref{BesselK}) as
	$$ K_{\ag{R},a}(1+2s+i\mu(\omega^{-1}\xi^2)) = 2 \BesselK_{\frac{1}{2}+s+i\frac{\mu(\omega^{-1}\xi^2)}{2}}(a). $$
\end{remark}
	$e_n$ (\ref{NBasSO2}) gives rise to a flat section of $\pi_{s,\xi}$ determined by
	$$ e(s,\xi,\omega\xi^{-1};n;\kappa) = e_n(\kappa), \kappa \in \SO_2(\ag{R}). $$
\begin{corollary}
\begin{itemize}
	\item[(1)] The effect of $\IntwR(s,\xi,\omega\xi^{-1})$ (\ref{LocIntwRDef}) on the $\gp{K}$-finite flat sections is given by
	$$ \IntwR e(s,\xi,\omega\xi^{-1};n) = \mu(s,\xi,\omega\xi^{-1};n) e(-s,\omega\xi^{-1},\xi;n), \text{ with} $$
	$$ \mu(s,\xi,\omega\xi^{-1};n) = \frac{\Gamma_{\ag{R}}(1+2s+i\mu(\omega^{-1}\xi^2)+n_0)}{\Gamma_{\ag{R}}(1-2s-i\mu(\omega^{-1}\xi^2)+n_0)} \frac{\Gamma_{\ag{R}}(1-2s-i\mu(\omega^{-1}\xi^2)+\norm[n])}{(-1)^{\frac{\norm[n]-n}{2}} \Gamma_{\ag{R}}(1+2s+i\mu(\omega^{-1}\xi^2)+\norm[n])}, $$
	where $n_0=n_0(\omega^{-1}\xi^2)=n_0(\omega) \in \{0,1\}$ is determined by $\omega(-1)=(-1)^{n_0}$.
	\item[(2)] We have for $y \in \ag{R}$, $\norm[\mu(iy,\xi,\omega\xi^{-1};n)]=1$ and
	$$ \extnorm{\mu'(iy,\xi,\omega\xi^{-1};n)} \leq \left\{ \begin{matrix} 0 & \text{if } \norm[n]=n_0 \\ 2 \left( \frac{2}{n_0+1} + \log \frac{\norm[n]-1}{n_0+1} \right) & \text{if } \norm[n] \geq n_0+2. \end{matrix} \right. $$
\end{itemize}
\label{LocIntwCalReal}
\end{corollary}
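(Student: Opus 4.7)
The plan is to follow the template of Corollary~\ref{LocIntwCalCpx} closely; the real case is in fact simpler, since each $V_{n_0,n}$ is one dimensional and no $\gp{K}$-differentials are required to transport the computation from one basis vector to another.

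For part (1), I would take $P=(-i)^n P_n$ as in (\ref{FinSchSecR}), which by the proof of Lemma~\ref{IntwPropR} gives $f_P=\Gamma_{\ag{R}}(1+2s+i\mu(\omega^{-1}\xi^2)+|n|)\,e(s,\xi,\omega\xi^{-1};n)$. The key intermediate step is to compute $\widehat P$ explicitly. Using the identities $\bar\partial^m e^{-\pi|z|^2}=(-\pi z)^m e^{-\pi|z|^2}$ and its conjugate, combined with the Fourier interchanges $\partial\leftrightarrow\pi\bar z$ and $\bar\partial\leftrightarrow-\pi z$, a short calculation (splitting according to the sign of $n$) yields $\widehat{P_n}=i^{|n|-n}P_n$, hence $\widehat P=i^{|n|-n}\cdot(-i)^n P_n$. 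Substituting into the identity $\Intw f_P(s,\xi,\omega\xi^{-1})=f_{\widehat P}(-s,\omega\xi^{-1},\xi)$ from \S 3.1, and using $\mu(\omega\xi^{-2})=-\mu(\omega^{-1}\xi^2)$, gives
$$ \Intw\, e(s,\xi,\omega\xi^{-1};n) = \frac{i^{|n|-n}\,\Gamma_{\ag{R}}(1-2s-i\mu(\omega^{-1}\xi^2)+|n|)}{\Gamma_{\ag{R}}(1+2s+i\mu(\omega^{-1}\xi^2)+|n|)}\, e(-s,\omega\xi^{-1},\xi;n). $$
Multiplying by $L_{\ag{R}}(1+2s,\omega^{-1}\xi^2)/L_{\ag{R}}(1-2s,\omega\xi^{-2})$ to pass from $\Intw$ to $\IntwR$ then produces the outer pair of Gamma factors at $n_0$ (the parity of $\omega$, since $\xi^2(-1)=1$, so $\omega^{-1}\xi^2$ has the same parity as $\omega$); and $i^{|n|-n}=(-1)^{(|n|-n)/2}\in\{\pm 1\}$ is its own reciprocal, which lets us move it to the denominator to match the stated formula for $\mu(s,\xi,\omega\xi^{-1};n)$.

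For part (2), I would rewrite the formula by $\Gamma_{\ag{R}}(z)=\pi^{-z/2}\Gamma(z/2)$ (the $\pi$-powers cancel) and telescope the resulting Gamma quotients via $\Gamma(x+1)=x\Gamma(x)$; writing $\nu:=\mu(\omega^{-1}\xi^2)$ for brevity, this puts $\mu$ into the finite-product form
$$ \mu(s,\xi,\omega\xi^{-1};n)=\frac{1}{(-1)^{(|n|-n)/2}}\prod_{k=0}^{(|n|-n_0)/2-1}\frac{c_k-s-i\nu/2}{c_k+s+i\nu/2}, \qquad c_k:=\frac{1+n_0}{2}+k. $$
At $s=iy$ each factor is a ratio of complex conjugates, hence unimodular, so $|\mu(iy,\xi,\omega\xi^{-1};n)|=1$; when $|n|=n_0$ the product is empty and $\mu$ is constant in $s$, yielding the first case of the bound. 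For $|n|\geq n_0+2$, logarithmic differentiation at $s=iy$ gives
$$ \frac{\mu'(iy,\xi,\omega\xi^{-1};n)}{\mu(iy,\xi,\omega\xi^{-1};n)}=-\sum_{k=0}^{(|n|-n_0)/2-1}\frac{2c_k}{c_k^2+(y+\nu/2)^2}, $$
so $|\mu'(iy,\xi,\omega\xi^{-1};n)|\leq 2\sum_k 1/c_k=4\sum_{k=0}^{(|n|-n_0)/2-1}1/(1+n_0+2k)$. Estimating this partial sum by its leading term $1/(n_0+1)$ plus the integral $\int_0^{(|n|-n_0)/2-1}dx/(1+n_0+2x)=\tfrac{1}{2}\log((|n|-1)/(n_0+1))$ gives exactly the asserted bound $2\bigl(2/(n_0+1)+\log((|n|-1)/(n_0+1))\bigr)$.

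The only bookkeeping requiring attention is the extraction of the sign $i^{|n|-n}$ in $\widehat{P_n}$: the cases $n\geq 0$ and $n<0$ have to be handled separately because the Fourier-interchange rules $\partial\leftrightarrow\pi\bar z$ and $\bar\partial\leftrightarrow-\pi z$ differ by a sign, but they recombine cleanly into the uniform expression $(-1)^{(|n|-n)/2}$ appearing in the statement; everything else is a direct transcription of the complex-place calculation.
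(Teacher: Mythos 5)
Your proof is correct, and it is exactly the reconstruction the paper leaves implicit: the paper states Corollary~\ref{LocIntwCalReal} without proof, pointing (via Lemma~\ref{IntwPropR}'s ``quite similar to \ldots Lemma~\ref{IntwPropC}'') to the complex-place computation as a template, which is precisely what you follow. All the steps check out: the Fourier-transform identity $\widehat{P_n}=i^{\norm[n]-n}P_n$ (verified by either sign of $n$ via the stated interchange rules), the transfer through $\Intw f_P(s,\xi,\omega\xi^{-1})=f_{\widehat P}(-s,\omega\xi^{-1},\xi)$, the sign-self-reciprocity of $(-1)^{(\norm[n]-n)/2}$, the telescoping into $\prod_{k=0}^{m-1}(c_k-s-i\nu/2)/(c_k+s+i\nu/2)$ with $m=(\norm[n]-n_0)/2$ and $c_k=(1+n_0)/2+k$, the unimodularity on $\Re s=0$, and the integral-comparison estimate $4\bigl(\tfrac{1}{n_0+1}+\tfrac12\log\tfrac{\norm[n]-1}{n_0+1}\bigr)$ which reproduces the stated bound.
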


	\subsection{Local $\gp{K}$-isotypic theory: finite place}
	
		\subsubsection{Some Fourier Transforms}
		
	Let $\chi$ be a character of $\vo^{\times}$ with conductor $\cond(\chi)$. Recall we have fixed a uniformiser $\varpi$.
\begin{lemma}
\begin{itemize}
	\item[(1)] If $\cond(\chi)>0$, then the integral $ \int_{\vo^{\times}} \chi(y) \psi(-\varpi^n y) dy $ is non-vanishing only when $n=-\cond(\psi)-\cond(\chi)$.
	\item[(2)] If we denote by
	$$ G(\chi,\psi) = \int_{\vo^{\times}} \chi(y) \psi(-\varpi^{-\cond(\psi)-\cond(\chi)}y) dy, $$
	then we have $\norm[G(\chi,\psi)]=\Cond(\chi)^{-1/2}\Cond(\psi)^{-1/2}$.
	\item[(3)] We shall write
	$$ g(\chi,\psi) = G(\chi,\psi) \Cond(\chi)^{\frac{1}{2}} \Cond(\psi)^{\frac{1}{2}}. $$
	We have $\norm[g(\chi,\psi)]=1$ and
	$$ g(\chi^{-1},\psi) = \chi(-1) \overline{g(\chi,\psi)}. $$
\end{itemize}
\end{lemma}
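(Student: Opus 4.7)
The plan is to handle the three parts in sequence, writing $c = \cond(\chi)$ and $d = \cond(\psi)$ throughout.

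For part (1), the strategy is to decompose $\vo^{\times}$ into cosets modulo $1+\vp^c$. Since $\chi$ is trivial on $1+\vp^c$ by the definition of the conductor, writing $\vo^{\times}$ as the disjoint union of $a(1+\vp^c)$ for $a$ ranging over representatives of $(\vo/\vp^c)^{\times}$ gives
\[ \int_{\vo^{\times}} \chi(y) \psi(-\varpi^n y) dy = \sum_a \chi(a) \psi(-\varpi^n a) \int_{\vp^c} \psi(-\varpi^n a u) du. \]
The inner integral equals $\Vol(\vp^c)$ when $n + c \geq -d$ and vanishes otherwise, by orthogonality of additive characters; this gives the vanishing of the whole integral for $n < -c-d$. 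For $n > -c-d$, the function $a \mapsto \psi(-\varpi^n a)$ descends through a proper quotient of $(\vo/\vp^c)^{\times}$ (or is identically $1$ when $n \geq -d$), so the remaining outer sum $\sum_a \chi(a)\psi(-\varpi^n a)$ vanishes because $\chi$ is non-trivial on the kernel of that projection, its conductor being strictly larger than the effective modulus of $\psi(-\varpi^n \cdot)$.

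For part (2), I expand
\[ \norm[G(\chi,\psi)]^2 = \int_{\vo^{\times}} \int_{\vo^{\times}} \chi(y) \overline{\chi(y')} \psi(-\varpi^{-c-d}(y-y')) dy dy' \]
and substitute $y = y't$ with $t \in \vo^{\times}$ to uncouple the variables into $\int_{\vo^{\times}} \chi(t) J(t) dt$, where $J(t) = \int_{\vo^{\times}} \psi(-\varpi^{-c-d}(t-1) y') dy'$. Evaluating $J(t)$ via $\int_{\vo} - \int_{\vp}$ and orthogonality of additive characters, it depends only on the valuation $j$ of $t-1$: it equals $\Vol(\vo^{\times})$ when $j \geq c$, equals $-\Vol(\vp)$ when $j = c-1$, and vanishes otherwise. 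On the first region $\chi(t)=1$, while on the second region $\chi$ descends (for $c \geq 2$) to a non-trivial additive character of $\vo/\vp$ whose integral over $1+\vp^{c-1}$ vanishes, so the contribution from $j = c-1$ reduces to $\Vol(\vp)\Vol(1+\vp^c)$; the case $c=1$ is handled analogously using $\int_{\vo^{\times}} \chi = 0$. Summing yields $\norm[G]^2 = \Vol(\vo)\Vol(\vp^c)$, matching $\Cond(\chi)^{-1}\Cond(\psi)^{-1}$ under the self-dual Haar normalization $\Vol(\vo) = \Cond(\psi)^{-1/2}$.

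For part (3), unitarity of $g(\chi,\psi)$ is immediate from (2). For the functional equation, I apply $y \mapsto -y$ in
\[ \overline{G(\chi,\psi)} = \int_{\vo^{\times}} \chi^{-1}(y) \psi(\varpi^{-c-d} y) dy \]
to get $\chi^{-1}(-1) G(\chi^{-1},\psi) = \chi(-1) G(\chi^{-1},\psi)$, the last equality using $\chi(-1) \in \{\pm 1\}$. Since $\cond(\chi^{-1}) = \cond(\chi)$, the normalizing factors agree, giving the asserted identity. The main obstacle is the careful bookkeeping in part (2), tracking both contributing valuation regimes (including the separate case $c=1$) and reconciling the final measure computation with the normalization of $\Cond(\psi)$.
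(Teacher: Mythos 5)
Your proposal is correct and supplies in full the elementary computations that the paper simply delegates: the paper's proof merely cites \cite{Wu14} for item (2) and declares items (1) and (3) to be ``simple computations.'' Your arguments --- decomposing $\vo^{\times}$ into cosets of $1+\vp^{\cond(\chi)}$ for (1), unfolding $\norm[G(\chi,\psi)]^2$ as a double integral and substituting $y=y't$ so that only the valuation strata $v(t-1)\geq \cond(\chi)-1$ contribute for (2), and the change of variable $y\mapsto -y$ together with $\chi(-1)=\pm 1$ for (3) --- are exactly the standard computations the author had in mind, and they are carried out correctly under the self-dual normalization $\Vol(\vo)=\Cond(\psi)^{-1/2}$ that the paper uses.
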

\begin{proof}
	Up to normalization of measures, (2) is exactly \cite[Proposition 4.6]{Wu14}. Other assertions are simple computations.
\end{proof}
\begin{definition}
	For $n \in \ag{N}$, we denote by $[\chi,n]$ resp. $[1,\geq n]$ the function on $\F^{\times}$ supported in $\varpi^n \vo^{\times}$ resp. $\varpi^n \vo$ given by
	$$ \varpi^n \vo^{\times} \to \ag{C}, \varpi^n y \mapsto \chi(y) \text{ resp. } 1_{\varpi^n \vo}. $$
\end{definition}
\begin{proposition}
\begin{itemize}
	\item[(1)] If $\cond(\chi)>0$, then we have the Fourier transform
	$$ \Four{[\chi,n]} = q^{-n} G(\chi,\psi) [\chi^{-1}, -n-\cond(\psi)-\cond(\chi)]. $$
	\item[(2)] If $\cond(\chi)=0$, then $\chi=1$ and we have the Fourier transform
	$$ \Four{[1,\geq n]} = q^{-n} \Cond(\psi)^{-\frac{1}{2}} [1,\geq -n-\cond(\psi)]. $$
\end{itemize}
\label{CalFourFin}
\end{proposition}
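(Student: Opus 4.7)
The plan is to establish both parts by direct computation, using the change of variables $u = \varpi^n y$ to normalize the integration domain to $\vo^\times$ or $\vo$, and then invoking the preceding lemma in case (1) and a standard vanishing/volume computation for the additive character in case (2). Throughout, I take the underlying measure on $\F$ to be the self-dual measure with respect to $\psi$, so that $\vol(\vo) = \Cond(\psi)^{-1/2}$; this is consistent with the constant $\Cond(\psi)^{-1/2}$ appearing in part (2) and with the normalization built into $G(\chi, \psi)$.

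\textbf{Proof of (1).} By definition,
\[ \Four{[\chi,n]}(x) = \int_{\varpi^n \vo^\times} \chi(\varpi^{-n} u) \psi(-ux) du. \]
Substituting $u = \varpi^n y$ with $y \in \vo^\times$ and $du = q^{-n} dy$ gives
\[ \Four{[\chi,n]}(x) = q^{-n} \int_{\vo^\times} \chi(y) \psi(-\varpi^n y x) dy. \]
If $x \notin \F^\times$ then the integral reduces to $\int_{\vo^\times} \chi(y) dy = 0$ by the nontriviality of $\chi$ (here $\cond(\chi) > 0$). Otherwise, write $x = \varpi^m z$ with $z \in \vo^\times$; after the further substitution $y \mapsto y z^{-1}$ we obtain
\[ \Four{[\chi,n]}(x) = q^{-n} \chi^{-1}(z) \int_{\vo^\times} \chi(y) \psi(-\varpi^{n+m} y) dy. \]
By part (1) of the preceding lemma, the remaining integral vanishes unless $n+m = -\cond(\psi) - \cond(\chi)$, in which case by part (2) it equals $G(\chi, \psi)$. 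This is exactly the assertion.

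\textbf{Proof of (2).} Here $\chi = 1$ and we compute directly
\[ \Four{[1,\geq n]}(x) = \int_{\varpi^n \vo} \psi(-ux) du = q^{-n} \int_{\vo} \psi(-\varpi^n v x) dv. \]
The inner integral equals $\vol(\vo) = \Cond(\psi)^{-1/2}$ when $\varpi^n x$ lies in the largest fractional ideal on which $\psi$ is trivial, i.e., in $\vp^{-\cond(\psi)}$, and vanishes otherwise. This gives $x \in \vp^{-n - \cond(\psi)}$, which is exactly the support of $[1, \geq -n - \cond(\psi)]$, and we recover the claimed formula.

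Both parts are essentially bookkeeping: the only thing that might cause trouble is the measure normalization, but since the self-dual convention is dictated by the $\Cond(\psi)^{-1/2}$ in part (2) and by the definition of $G(\chi, \psi)$ in part (1), there is no genuine obstacle. No new analytic input is required beyond what the preceding lemma already provides.
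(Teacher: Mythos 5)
Your proof is correct and carries out exactly the direct computation that the paper itself leaves to the reader (the paper's proof is literally just ``These are elementary computations''). You substitute $u=\varpi^n y$, reduce part (1) to the preceding lemma via $x=\varpi^m z$ and the translation $y\mapsto yz^{-1}$, and handle part (2) by identifying when the additive character is trivial on $\varpi^n x\vo$; the measure normalization $\Vol(\vo)=\Cond(\psi)^{-1/2}$ you invoke is the one forced by the paper's conventions, and the vanishing of $\int_{\vo^\times}\chi$ for nontrivial $\chi$ correctly disposes of $x=0$ in case (1). No gaps.
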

\begin{proof}
	These are elementary computations.
\end{proof}

		\subsubsection{Classical Vectors}
		
	It can be inferred from \cite{Ca73_Br} that the $\gp{K}$-representation
	$$ \Res_{\gp{K}}^{\GL_2(\F)} V_{s,\xi} = \Ind_{\gp{B}(\vo)}^{\gp{K}} (\xi, \omega \xi^{-1}) = \oplus_{n \geq c} V_n $$
	has a decomposition into $\gp{K}$-irreducibles $(\sigma_n,V_n)$ for $c=\cond(\pi_{s,\xi})=\cond(\xi)+\cond(\omega\xi^{-1})$; and that for $N \geq c$
	$$ \oplus_{c \leq n \leq N} V_n = \Ind_{\gp{B}(\vo)}^{\gp{K}} (\xi, \omega \xi^{-1})^{\gp{K}[\vp^N]} $$
	is just the subspace of $V_{s,\xi}$ consisting of $\gp{K}[\vp^N]$-invariant vectors. In each $V_n$ there is a unitary $e_n=e(s,\xi,\omega\xi^{-1};n)$ unique up to a scalar of modulus $1$ which is invariant by $\gp{K}_1[\vp^n]$. Here we have written
	$$ \gp{K}[\vp^n] = \begin{pmatrix} 1+\vp^n & \vp^n \\ \vp^n & 1+\vp^n \end{pmatrix}, \gp{K}_1[\vp^n] = \begin{pmatrix} \vo^{\times} & \vo \\ \vp^n & 1+\vp^n \end{pmatrix}. $$
	Hence $e_n, c \leq n \leq N$ span the subspace of $\gp{K}_1[\vp^N]$-invariant vectors in $V_{s,\xi}$. They are called ``classical vectors'' in \cite{Wu14}.
	
	On the other hand, we have
	$$ \Ind_{\gp{B}(\vo)}^{\gp{K}} (\xi, \omega \xi^{-1}) \simeq (\xi \circ \det) \otimes \Ind_{\gp{B}(\vo)}^{\gp{K}} (1, \omega \xi^{-2}). $$
	Note that $\gp{K}$ transitively acts (at right) on
	$$ \vo \times \vo - \vp \times \vp = \F_1^2 = \{ (x_1,x_2) \in \F^2: \max(\norm[x_1]_{\F},\norm[x_2]_{\F}) = 1 \} \simeq \gp{B}_1(\vo) \backslash \gp{K}, $$
	which induces a $\gp{K}$-isomorphism ($\vo^{\times}$ acting on $\F_1^2$ as scalar multiplication)
	$$ \iota: \Ind_{\vo^{\times}}^{\F_1^2} \omega\xi^{-2} \simeq \Ind_{\gp{B}(\vo)}^{\gp{K}} (1, \omega \xi^{-2}), \Phi \mapsto \left( \kappa \mapsto \int_{\F^{\times}} \Phi((0,t)\kappa) \omega^{-1}\xi^2(t) d^{\times}t = \Cond(\psi)^{-\frac{1}{2}} \Phi((0,1)\kappa) \right). $$
	Note that the $\gp{K}$-invariant measure on $\F_1^2$ is the restriction of the usual Tate's one on $\F^2$, and that
	$$ \Norm[\Phi]^2 = \int_{\F_1^2} \norm[\Phi(x_1,x_2)]^2 dx_1 dx_2 = \zeta_v(2)^{-1} \int_{\gp{K}} \norm[\iota(\Phi)(\kappa)]^2 d\kappa = \zeta_v(2)^{-1} \Norm[\iota(\Phi)]^2 $$
	for our normalization of measures.
\begin{lemma}
	Under the composition of $\iota$ and tensoring by $\xi \circ \det$ map, the space of $\gp{K}_1[\vp^N]$-invariant vectors in $V_{s,\xi}$ is in bijection with $\Phi \in \Ind_{\vo^{\times}}^{\F_1^2} \omega\xi^{-2}$ satisfying
\begin{itemize}
	\item[(1)] $\Phi(u_1x,u_2y) = \xi^{-1}(u_1) \omega\xi^{-1}(u_2) \Phi(x,y)$ for all $u_1,u_2 \in \vo^{\times}, (x,y) \in \F_1^2$;
	\item[(2)] $\Phi(x,y+tx)=\Phi(x,y)$ for all $t \in \vo, (x,y) \in \F_1^2$;
	\item[(3)] $\Phi(x+ty,y)=\Phi(x,y)$ for all $t \in \vp^N, (x,y) \in \F_1^2$.
\end{itemize}
\label{ClassicalPhi}
\end{lemma}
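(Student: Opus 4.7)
My plan is to translate the $\gp{K}_1[\vp^N]$-invariance of the vector $f_\Phi(\kappa) := \xi(\det \kappa)\, \iota(\Phi)(\kappa) = \Cond(\psi)^{-\frac{1}{2}}\,\xi(\det \kappa)\,\Phi((0,1)\kappa)$ into pointwise conditions on $\Phi$, by tracking the right regular $\gp{K}$-action through $\iota$ and the twist by $\xi \circ \det$. A direct computation, exploiting that $\iota$ is a $\gp{K}$-map for the natural right-translation actions, yields
$$ R(\kappa').f_\Phi \;=\; f_{\,\xi(\det\kappa')\, R(\kappa').\Phi}, \qquad (R(\kappa').\Phi)(\mathbf{x}) = \Phi(\mathbf{x}\kappa'), $$
so by injectivity of $\Phi \mapsto f_\Phi$ on the induced representation, the condition $f_\Phi(\kappa\kappa') = f_\Phi(\kappa)$ for all $\kappa' \in \gp{K}_1[\vp^N]$ is equivalent to
$$ \xi(\det\kappa')\,\Phi(\mathbf{x}\kappa') = \Phi(\mathbf{x}), \qquad \forall\, \kappa' \in \gp{K}_1[\vp^N],\ \mathbf{x} \in \F_1^2. \qquad (\star) $$

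The next step is to reduce $(\star)$ to checking only the three subgroups
$$ U = \begin{pmatrix} 1 & \vo \\ 0 & 1 \end{pmatrix}, \qquad V = \begin{pmatrix} 1 & 0 \\ \vp^N & 1 \end{pmatrix}, \qquad T_0 = \begin{pmatrix} \vo^{\times} & 0 \\ 0 & 1+\vp^N \end{pmatrix}, $$
via the Iwahori-type factorization
$$ \begin{pmatrix} a & b \\ c & d \end{pmatrix} = \begin{pmatrix} 1 & 0 \\ cd^{-1} & 1 \end{pmatrix} \begin{pmatrix} (ad-bc)/d & 0 \\ 0 & d \end{pmatrix} \begin{pmatrix} 1 & b/d \\ 0 & 1 \end{pmatrix}, $$
which is legitimate because $d \in 1+\vp^N \subset \vo^{\times}$ for $N \geq 1$. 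Computing the right action explicitly matches $(\star)$ on $U$ with condition (2) (since $(x,y)\begin{pmatrix} 1 & b \\ 0 & 1 \end{pmatrix} = (x, y+bx)$ and $\det = 1$), and $(\star)$ on $V$ with condition (3) (since $(x,y)\begin{pmatrix} 1 & 0 \\ c & 1 \end{pmatrix} = (x+cy, y)$). For $\kappa' = \diag(u_1, u_2) \in T_0$, $(\star)$ directly yields $\Phi(u_1 x, u_2 y) = \xi^{-1}(u_1 u_2)\Phi(x,y)$ only for $u_1 \in \vo^{\times},\, u_2 \in 1+\vp^N$; I then combine this with the built-in equivariance $\Phi(u\mathbf{x}) = \omega\xi^{-2}(u)\Phi(\mathbf{x})$ of $\Phi \in \Ind_{\vo^{\times}}^{\F_1^2} \omega\xi^{-2}$ by writing $(u_1 x, u_2 y) = u_2 \cdot (u_1 u_2^{-1} x, y)$, which upgrades the identity to all $u_1, u_2 \in \vo^{\times}$ in exactly the shape $\xi^{-1}(u_1)\,\omega\xi^{-1}(u_2)$ of (1). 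Conversely, for $N \geq c = \cond(\xi) + \cond(\omega\xi^{-1})$ both $\xi$ and $\omega\xi^{-1}$ are trivial on $1+\vp^N$, so (1) specializes back to the $T_0$-constraint, closing the bijection.

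The main difficulty is purely bookkeeping: one must carefully track the direction of the $\xi \circ \det$ twist under right translation, and recognize that the superficially stronger form of (1) (with $u_2$ ranging over all of $\vo^{\times}$ rather than just $1+\vp^N$) is exactly the synthesis of the $T_0$-invariance and the built-in $\vo^{\times}$-equivariance carried by the induced-picture space.
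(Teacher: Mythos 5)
Your proposal is correct and takes the same route as the paper: translate $\gp{K}_1[\vp^N]$-invariance through $\iota$ and the $\xi\circ\det$ twist into pointwise conditions on $\Phi$, reducing to a generating set of $\gp{K}_1[\vp^N]$. In fact you are \emph{more} careful than the printed proof, which simply lists the generating matrices
$\begin{pmatrix} \vo^{\times} & \\ & \vo^{\times} \end{pmatrix},\ \begin{pmatrix} 1 & \vo \\ & 1 \end{pmatrix},\ \begin{pmatrix} 1 & \\ \vp^N & 1 \end{pmatrix}$
and says ``we conclude by easily translating.'' Taken literally this is imprecise: the full diagonal torus $\diag(\vo^\times,\vo^\times)$ is not contained in $\gp{K}_1[\vp^N]$; those three families generate the larger $\gp{K}_0[\vp^N]$. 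You correctly isolate that $\gp{K}_1[\vp^N]$ only supplies $u_2\in 1+\vp^N$ in condition (1), and that upgrading to all $u_2\in\vo^\times$ requires combining the $T_0$-constraint with the $\omega\xi^{-2}$-covariance already built into $\Ind_{\vo^\times}^{\F_1^2}\omega\xi^{-2}$ via $(u_1 x,u_2 y)=u_2\cdot(u_1 u_2^{-1}x,y)$, and you check the converse using $\cond(\xi),\cond(\omega\xi^{-1})\le N$. That is the substance the paper's proof glosses over.

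One small correction: the factorization you wrote down, with the lower-unipotent factor on the left and the upper-unipotent on the right, does not actually reproduce $\begin{pmatrix} a&b\\c&d\end{pmatrix}$ (the $(1,1)$ entry of the product is $(ad-bc)/d$, not $a$). The valid version with $d$ in the denominator places the factors in the order upper-unipotent, diagonal, lower-unipotent:
$$\begin{pmatrix} a&b\\c&d\end{pmatrix} = \begin{pmatrix} 1&b/d\\0&1\end{pmatrix}\begin{pmatrix} (ad-bc)/d&0\\0&d\end{pmatrix}\begin{pmatrix} 1&0\\c/d&1\end{pmatrix}.$$
This still lands $\gamma=c/d\in\vp^N$, $\delta=d\in 1+\vp^N$, $\alpha=(ad-bc)/d\in\vo^\times$, $\beta=b/d\in\vo$, so the reduction to $U$, $T_0$, $V$ goes through unchanged. (One should also note the factorization presupposes $N\ge 1$; the $N=0$ case, when it occurs, is the spherical case and is immediate.) None of this affects the validity of your argument.
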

\begin{definition}
	We shall refer to the above subspace of $\Ind_{\vo^{\times}}^{\F_1^2} \omega\xi^{-2}$ as \emph{level $N$} subspace.
\end{definition}
\begin{proof}
	Since $\gp{K}_1[\vp^N]$ is generated by the matrices
	$$ \begin{pmatrix} \vo^{\times} & \\ & \vo^{\times} \end{pmatrix}, \begin{pmatrix} 1 & \vo \\ & 1 \end{pmatrix}, \begin{pmatrix} 1 & \\ \vp^N & 1 \end{pmatrix}, $$
	we conclude by easily translating the action of the above elements on $\Ind_{\vo^{\times}}^{\F_1^2} \omega\xi^{-2}$.
\end{proof}
\begin{lemma}
	For any integer $n \geq c=\cond(\pi(\xi,\omega\xi^{-1}))$ we define $\Phi_n=\Phi_n(\xi,\omega\xi^{-1};\cdot)$ as follows:
\begin{itemize}
	\item[(1)] If $\cond(\xi), \cond(\omega\xi^{-1})>0$, then $c \geq 2$ and
	$$ \Phi_c = \frac{\Cond(\psi)^{\frac{1}{2}} \Cond(\omega\xi^{-1})^{\frac{1}{2}}}{1-q^{-1}} [\xi^{-1}, \cond(\omega\xi^{-1})] \otimes [\omega\xi^{-1},0]; $$
	$$ \Phi_{c+n} = \frac{q^{\frac{n}{2}} \Cond(\psi)^{\frac{1}{2}} \Cond(\omega\xi^{-1})^{\frac{1}{2}}}{1-q^{-1}} [\xi^{-1}, \cond(\omega\xi^{-1})+n] \otimes [\omega\xi^{-1},0], n \geq 1. $$
	\item[(2)] If $\cond(\xi)>0$, $\cond(\omega\xi^{-1})=0$, then $c \geq 1$ and
	$$ \Phi_c = \frac{\Cond(\psi)^{\frac{1}{2}}}{(1-q^{-1})^{\frac{1}{2}}} [\xi^{-1},0] \otimes [1, \geq 0]; \Phi_{c+n} = \frac{q^{\frac{n}{2}} \Cond(\psi)^{\frac{1}{2}}}{(1-q^{-1})(1+q^{-1})^{\frac{1}{2}}} [\xi^{-1},n] \otimes [1,0], n \geq 1. $$
	\item[(3)] If $\cond(\xi)=0$, $\cond(\omega\xi^{-1})>0$, then  $c \geq 1$ and
	$$ \Phi_c = \frac{\Cond(\psi)^{\frac{1}{2}} \Cond(\omega\xi^{-1})^{\frac{1}{2}}}{(1-q^{-1})^{\frac{1}{2}}} [1, \geq \cond(\omega\xi^{-1})] \otimes [\omega\xi^{-1},0]; $$
	$$ \Phi_{c+n} = \frac{q^{\frac{n}{2}} \Cond(\psi)^{\frac{1}{2}} \Cond(\omega\xi^{-1})^{\frac{1}{2}}}{(1-q^{-1})(1+q^{-1})^{\frac{1}{2}}} \left( [1, \geq \cond(\omega\xi^{-1})+n] - q^{-1} [1, \geq \cond(\omega\xi^{-1})+n-1] \right) \otimes [\omega\xi^{-1},0]. $$
	\item[(4)] If $\cond(\xi)=\cond(\omega\xi^{-1})=0$, then $c=0$ and
	$$ \Phi_0 = \frac{\Cond(\psi)^{\frac{1}{2}}}{(1-q^{-2})^{\frac{1}{2}}} 1_{\F_1^2}; \Phi_1 = \Cond(\psi)^{\frac{1}{2}} \sqrt{\frac{q(1+q^{-1})}{1-q^{-1}}} \left( 1_{\vp \times \vo^{\times}} - (q+1)^{-1} 1_{\F_1^2} \right); $$
	$$ \Phi_n = \frac{q^{\frac{n}{2}} \Cond(\psi)^{\frac{1}{2}}}{1-q^{-1}} \left( 1_{\vp^n \times \vo^{\times}} - q^{-1} 1_{\vp^{n-1} \times \vo^{\times}} \right), n \geq 2. $$
\end{itemize}
	Then $\{ \Phi_n: c \leq n \leq N \}$ form an orthonormal basis of the level $N$ subspace.
\end{lemma}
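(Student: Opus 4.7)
The plan is to establish the lemma in three stages: (i) verify that each $\Phi_n$ (for $c \leq n \leq N$) belongs to the level $N$ subspace, (ii) check that the family $\{\Phi_n : c \leq n \leq N\}$ is orthonormal under the inherited Tate inner product $\Norm[\Phi]^2 = \int_{\F_1^2} \norm[\Phi(x,y)]^2 dx dy$, and (iii) observe that the cardinality $N-c+1$ equals the dimension of the level $N$ subspace, so any orthonormal family of this size is automatically a basis.

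For stage (i), I would run case-by-case through Lemma \ref{ClassicalPhi}, conditions (1)--(3). Condition (1) is built into the defining characters of $[\xi^{-1},\cdot]$, $[\omega\xi^{-1},0]$ and $[1,\geq\cdot]$. For conditions (2) and (3), the support of each $\Phi_n$ in the first variable is contained in $\varpi^{k} \vo$ for some $k \geq 1$ (so that $y \in \vo^{\times}$ in the second slot forces all transformations to stay in the prescribed set), and one verifies that $x \mapsto x + ty$ with $t \in \vp^N$ preserves membership in $\varpi^{k} \vo^{\times}$ provided $k \leq N$, while $y \mapsto y + tx$ with $t \in \vo$ preserves $y \in \vo^{\times}$ because $tx \in \vp$. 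A minor subtlety in cases (3) and (4) is that the combinations like $[1,\geq k] - q^{-1}[1,\geq k-1]$ or $1_{\vp^n \times \vo^{\times}} - q^{-1} 1_{\vp^{n-1} \times \vo^{\times}}$ each satisfy the three conditions separately.

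For stage (ii), the volume formulae $\Vol(\varpi^k \vo^{\times}) = q^{-k}(1-q^{-1})\Cond(\psi)^{-1/2}$ and $\Vol(\varpi^k \vo) = q^{-k}\Cond(\psi)^{-1/2}$, together with $\Vol(\F_1^2) = (1-q^{-2})\Cond(\psi)^{-1}$, control everything. In cases (1) and (2), distinct $\Phi_n$'s have first-coordinate supports of pairwise distinct valuations, so orthogonality is automatic and only the norms need matching against the normalization constants; the factor $q^{n/2}$ in front compensates for the shrinking volume of $\varpi^{c+n}\vo^{\times}$. In cases (3) and (4), the decisive check is the cross-term: the coefficient $q^{-1}$ in $[1,\geq k] - q^{-1}[1,\geq k-1]$ is precisely what makes $\Phi_n \perp \Phi_{n-1}$, and the coefficient $(q+1)^{-1}$ in the definition of $\Phi_1$ of case (4) is dictated by $\Vol(\vp \times \vo^{\times})/\Vol(\F_1^2) = q^{-1}(1-q^{-1})/(1-q^{-2}) = (q+1)^{-1}$, making $\Phi_1 \perp \Phi_0$. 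After this, the norms collapse to orthogonal Pythagorean sums matching the squared prefactors.

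For stage (iii), the decomposition $\oplus_{c \leq n \leq N} V_n = \Ind_{\gp{B}(\vo)}^{\gp{K}}(\xi, \omega\xi^{-1})^{\gp{K}[\vp^N]}$ recalled from \cite{Ca73_Br} gives dimension $N-c+1$; since each $e_n$ is $\gp{K}_1[\vp^n]$-fixed, this equals the dimension of the level $N$ subspace. So $N-c+1$ orthonormal vectors inside a space of dimension $N-c+1$ must form a basis.

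The main obstacle I foresee is the bookkeeping in case (4), where $\F_1^2$ splits into the two strata $(\vo^{\times} \times \vo) \sqcup (\vp \times \vo^{\times})$ of differing volume; tracking how the $q^{-1}$-subtraction in the recursive definition of $\Phi_n$ interacts with the $\Cond(\psi)^{1/2}$ factors in the Tate measure demands careful arithmetic, but once the volume dictionary above is in hand the verification is mechanical.
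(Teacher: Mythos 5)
Your proposal is correct, and it is close in substance to the paper's but differs in framing. The paper's proof is constructive: it first computes the $\gp{B}(\vo)$-orbits in $\F_1^2$ (namely $\vo^{\times}\times\vo$ and $(\vp^n - \vp^{n+1})\times\vo^{\times}$ for $n\geq 1$), deduces the $\gp{K}_0[\vp^N]$-orbits, observes that the associated indicator functions $1_{\F_1^2}$, $1_{\vp^n\times\vo^{\times}}$ (in case (4), with the obvious modifications in cases (1)--(3)) span the level $N$ subspace, and then \emph{derives} the explicit $\Phi_n$'s by running Gram--Schmidt on that basis. You instead \emph{verify} the given formulae a posteriori: check membership via Lemma~\ref{ClassicalPhi}, check orthonormality via the volume dictionary, and conclude by a dimension count. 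The computations are identical in both routes (the Gram--Schmidt coefficients are precisely the inner-product ratios you check), so there is no real gap; the paper's route has the pedagogical merit of explaining where the constants like $(q+1)^{-1}$ and $q^{-1}$ come from, while yours is arguably cleaner to write down once the formulae are stated. One small point to tighten: your stage (iii) invokes the decomposition $\oplus_{c\leq n\leq N}V_n = \Ind_{\gp{B}(\vo)}^{\gp{K}}(\xi,\omega\xi^{-1})^{\gp{K}[\vp^N]}$ and then jumps to ``dimension $N-c+1$'', but that direct sum has much larger dimension; what you actually need is the one-dimensionality of the $\gp{K}_1[\vp^N]$-fixed line in each $V_n$ with $c\leq n\leq N$ (the newvector/oldvector uniqueness recalled just before Lemma~\ref{ClassicalPhi}), which together with the bijection of Lemma~\ref{ClassicalPhi} gives $\dim = N-c+1$. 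The paper's orbit count gives the same number and has the advantage that it also makes the basis of indicator functions visible, which is what feeds Gram--Schmidt.
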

\begin{proof}
	It is easy to see that the action of the upper triangular group $\gp{B}(\vo)$ has orbits in $\F_1^2$ as
	$$ \vo^{\times} \times \vo; (\vp^n-\vp^{n+1}) \times \vo^{\times}, n \geq 1. $$
	Consequently, the orbits of the group $\gp{K}_0[\vp^N]$ for $N \geq 1$ are
	$$ \vo^{\times} \times \vo; (\vp^n-\vp^{n+1}) \times \vo^{\times}, 1 \leq n < N; \vp^N \times \vo^{\times}. $$
	Taking case (4) for example, it follows that $1_{\F_1^2}, 1_{\vp^n \times \vo^{\times}}$ for $1 \leq n \leq N$ form a basis of the level $N$ subspace. We apply the Gram-Schmidt process to conclude.
\end{proof}
\begin{definition}
	We can take $e_n=e(s,\xi,\omega\xi^{-1};n)$ as the flat section defined by
	$$ e(s,\xi,\omega\xi^{-1};n;g) = \zeta_v(2)^{-\frac{1}{2}} \xi(\det g) \norm[\det g]^{\frac{1}{2}+s} \int_{\F^{\times}} \Phi_n(\xi,\omega\xi^{-1};(0,t)g) \omega^{-1}\xi^2(t) \norm[t]^{1+2s} d^{\times}t. $$
\label{NNBasFin}
\end{definition}
		
		\subsubsection{Intertwining Operator}
		
	Since smoothness and $\gp{K}$-finiteness are the same at a finite place, the analytic continuation of the intertwining operator for smooth vectors is the same as the one for $\gp{K}$-finite vectors.
\begin{lemma}
	The effect of $\Intw(s,\xi,\omega\xi^{-1})$ (\ref{LocIntwDef}) on the $\gp{K}$-finite flat sections is determined as follows:
	$$ \IntwR e(s,\xi,\omega\xi^{-1};c+n) = \mu(s,\xi,\omega\xi^{-1};c+n) e(-s,\omega\xi^{-1},\xi;c+n), \forall n \geq 0, \text{ with} $$
\begin{itemize}
	\item[(1)] If $\cond(\xi), \cond(\omega\xi^{-1})>0$, then we have
	$$ \mu(s,\xi,\omega\xi^{-1};c+n) = \frac{L(1+2s,\omega^{-1}\xi^2)}{L(1-2s,\omega\xi^{-2})} \frac{g(\xi^{-1},\psi)\overline{g(\omega^{-1}\xi,\psi)}}{\left( q^n \Cond(\psi) \Cond(\xi) \Cond(\omega\xi^{-1}) \right)^{2s+i\mu(\omega^{-1}\xi^2)}}, \forall n \geq 0. $$
	\item[(2)] If $\cond(\xi)>0$, $\cond(\omega\xi^{-1})=0$, then we have
	$$ \mu(s,\xi,\omega\xi^{-1};c+n) = \frac{g(\xi^{-1},\psi)}{\left( q^n \Cond(\psi) \Cond(\xi) \right)^{2s+i\mu(\omega^{-1}\xi^2)}}, \forall n \geq 0. $$	
	\item[(3)] If $\cond(\xi)=0$, $\cond(\omega\xi^{-1})>0$, then we have
	$$ \mu(s,\xi,\omega\xi^{-1};c+n) = \frac{\overline{g(\omega^{-1}\xi,\psi)}}{\left( q^n \Cond(\psi) \Cond(\xi) \right)^{2s+i\mu(\omega^{-1}\xi^2)}}, \forall n \geq 0. $$	
	\item[(4)] If $\cond(\xi)=\cond(\omega\xi^{-1})=0$, then we have
	$$ \mu(s,\xi,\omega\xi^{-1};0) = \Cond(\psi)^{-(2s+i\mu(\omega^{-1}\xi^2))}; $$
	$$ \mu(s,\xi,\omega\xi^{-1};n) = \frac{1-q^{-(1-2s-i\mu(\omega^{-1}\xi^2))}}{1-q^{-(1+2s+i\mu(\omega^{-1}\xi^2))}} (q^n \Cond(\psi))^{-(2s+i\mu(\omega^{-1}\xi^2))}, \forall n \geq 1. $$
\end{itemize}
	We have for $y \in \ag{R}$, $\norm[\mu(iy,\xi,\omega\xi^{-1};n)]=1$ and
	$$ \extnorm{\mu'(iy,\xi,\omega\xi^{-1};n)} \leq 2 \left( n\log q + \log \Cond(\psi) + \frac{2\log q}{1-q^{-1}} 1_{\cond(\omega^{-1}\xi^2)=0} \right). $$
\label{LocIntwCalFin}
\end{lemma}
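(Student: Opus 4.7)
The plan is to combine two ingredients already on the table. First, the key identity from Section 3.1,
\[
\Intw f_{\Phi}(s,\xi,\omega\xi^{-1};g) \;=\; f_{\widehat{\Phi}}(-s,\omega\xi^{-1},\xi;g),
\]
and second, the recipe $e(s,\xi,\omega\xi^{-1};c+n) = \zeta_v(2)^{-1/2} f_{\Phi_{c+n}(\xi,\omega\xi^{-1};\cdot)}(s,\xi,\omega\xi^{-1};\cdot)$ from Definition \ref{NNBasFin}. Computing $\Intw e(s,\xi,\omega\xi^{-1};c+n)$ therefore reduces to computing the two-variable Fourier transform $\widehat{\Phi_{c+n}(\xi,\omega\xi^{-1};\cdot)}$ and identifying it, up to a scalar, with $\Phi_{c+n}(\omega\xi^{-1},\xi;\cdot)$ in the induced model for the swapped parameters. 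Multiplying that scalar by $L_v(1+2s,\omega^{-1}\xi^2)/L_v(1-2s,\omega\xi^{-2})$ converts $\Intw$ into $\IntwR$ via (\ref{LocIntwDef}) and produces the eigenvalue $\mu(s,\xi,\omega\xi^{-1};c+n)$.

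In each of the four cases, $\Phi_{c+n}$ is an explicit (linear combination of) tensor products of elementary pieces $[\chi,k]$ and $[1,\ge k]$ on $\F\times\F$, and Proposition \ref{CalFourFin} gives their Fourier transforms in closed form. In cases (1)--(3), $\Phi_{c+n}$ is a single tensor product, whose Fourier transform is again a single tensor product; tracking supports and normalizations shows it equals a Gauss sum times $\Phi_{c+n}(\omega\xi^{-1},\xi;\cdot)$, and the Gauss sums $g(\xi^{-1},\psi)$ and $\overline{g(\omega^{-1}\xi,\psi)}$ appear exactly as asserted. In case (4) with $n\ge 1$, the subtractive combination $1_{\vp^{n}\times\vo^\times}-q^{-1}1_{\vp^{n-1}\times\vo^\times}$ is designed so that, under $\Four{[1,\ge k]} = q^{-k}\Cond(\psi)^{-1/2}[1,\ge -k-\cond(\psi)]$, its image reassembles into the analogous combination on the dual side; the discrepancy between ``$[1,\ge k]$'' pieces on the two sides is precisely the $L$-factor ratio $(1-q^{-(1-2s-i\mu(\omega^{-1}\xi^2))})/(1-q^{-(1+2s+i\mu(\omega^{-1}\xi^2))})$ visible in the final answer.

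The assertion $|\mu(iy,\xi,\omega\xi^{-1};n)|=1$ can either be quoted from the unitarity of $\Intw(iy,\xi,\omega\xi^{-1})$ recorded in \S 2.2, or read directly off the formulas: the Gauss sum factors are unimodular, the ``conductor power'' $X^{-(2iy+i\mu(\omega^{-1}\xi^2))}$ with $X>0$ is unimodular, and in case (4) the $L$-factor ratio is a quotient of complex conjugates. For the derivative bound I differentiate $\log\mu(iy;c+n)$. In cases (1)--(3) the only $y$-dependence is $X^{-(2iy+i\mu(\omega^{-1}\xi^2))}$ with $X$ one of $q^n\Cond(\psi)\Cond(\xi)\Cond(\omega\xi^{-1})$ or $q^n\Cond(\psi)\Cond(\xi)$; its logarithmic derivative has modulus $2\log X$, which after re-indexing in terms of the parameter $c+n$ of the lemma becomes $\le 2((c+n)\log q + \log\Cond(\psi))$. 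In case (4), and in case (1) when $\omega^{-1}\xi^2$ is unramified, one additionally differentiates the $L$-factor ratio; using $|1-q^{-1\pm(2iy+i\mu(\omega^{-1}\xi^2))}|\ge 1-q^{-1}$ bounds each of the two summands of the logarithmic derivative by $2\log q\cdot q^{-1}/(1-q^{-1})$, producing the residual $1_{\cond(\omega^{-1}\xi^2)=0}\cdot 2\log q/(1-q^{-1})$ term.

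The main obstacle is the bookkeeping in the second step. The numerical normalizations ($q^{n/2}$, $\Cond(\psi)^{1/2}$, $\Cond(\omega\xi^{-1})^{1/2}$, and the Gram--Schmidt factors $(1\pm q^{-1})^{\pm 1/2}$ in cases (3)--(4)) must balance so that $\widehat{\Phi_{c+n}(\xi,\omega\xi^{-1};\cdot)}$ lies exactly along $\Phi_{c+n}(\omega\xi^{-1},\xi;\cdot)$ in the swapped induced representation, rather than along some other level-$N$ basis element. Case (4) is the most delicate because $\Phi_n$ for $n\ge 1$ is a genuine two-term combination; once one verifies that the Fourier transform preserves this subtractive shape and releases the promised $L$-factor ratio, the other three cases follow by essentially the same but simpler computation.
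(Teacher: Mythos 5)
Your overall strategy is the same as the paper's: start from the identity $\Intw f_{\Phi}(s,\xi,\omega\xi^{-1}) = f_{\widehat{\Phi}}(-s,\omega\xi^{-1},\xi)$ from \S 3.1, feed in the explicit $\Phi_n$ from the orthonormal basis lemma, and use the Fourier transforms of Proposition \ref{CalFourFin} to read off the scalar. The $\gp{K}$-intertwining plus the one-dimensionality of each $e_n$-line guarantees \emph{a priori} that $\IntwR e(s,\xi,\omega\xi^{-1};c+n)$ is a scalar multiple of $e(-s,\omega\xi^{-1},\xi;c+n)$, so the task is just to extract that scalar. Your treatment of the unitarity statement, the logarithmic-derivative bound in cases (1)--(3), and the extra $L$-factor contribution when $\omega^{-1}\xi^{2}$ is unramified is all sound and matches the paper's reasoning.

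However, one intermediate claim in your plan is not literally correct and would need to be repaired. You assert that $\widehat{\Phi_{c+n}(\xi,\omega\xi^{-1};\cdot)}$ equals a scalar times $\Phi_{c+n}(\omega\xi^{-1},\xi;\cdot)$ as functions on $\F^{2}$. That is false even in the simplest case: for instance, $\Four{[1,\geq 0]\otimes[1,\geq 0]}$ is supported on $(\varpi^{-\cond(\psi)}\vo)^{2}$, and after the $w^{-1}$-twist, $\widehat{1_{\F_1^2}}$ lives on $(\varpi^{-1-\cond(\psi)}\vo)^{2}$ --- nowhere close to $\F_1^2$ --- and similarly in cases (1)--(3) the supports of $\widehat{\Phi_{c+n}}$ are shifted off $\F_1^2$ by $\cond(\psi)$ and the relevant conductors. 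What is true, and what you actually need, is the weaker statement that the \emph{sections} $f_{\widehat{\Phi_{c+n}(\xi,\omega\xi^{-1})}}(-s,\omega\xi^{-1},\xi;\cdot)$ and $f_{\Phi_{c+n}(\omega\xi^{-1},\xi)}(-s,\omega\xi^{-1},\xi;\cdot)$ are proportional, because the Tate integral $\Phi\mapsto f_{\Phi}$ is far from injective: shifting the radial support of $\Phi$ only rescales $f_{\Phi}$ by a power of $q$. This radial shift is precisely where the factor $\left(q^{n}\Cond(\psi)\Cond(\xi)\Cond(\omega\xi^{-1})\right)^{2s+i\mu(\omega^{-1}\xi^{2})}$ comes from, so you cannot simply "identify" the two $\Phi$'s; you must carry out the Tate integral on both sides (say by evaluating at a suitable $\kappa$) and divide, tracking the shift. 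The paper handles this cleanly in case (4) by introducing the auxiliary functions $\Psi_{n}=[1,\geq n]\otimes[1,\geq 0]$ and their sections $f_{n}$, whose transformation under $\Intw$ is immediate, and then expressing $\iota(1_{\F_1^2})$ and $\iota(1_{\vp^{n}\times\vo^{\times}})$ as linear combinations of the $f_{n}$; the subtractive shape of $\Phi_{n}$ for $n\geq 1$ is then seen to reproduce after applying $\Intw$, and the $L$-factor ratio drops out of the $f_0$-coefficient. If you replace "identifying $\widehat{\Phi_{c+n}}$ with $\Phi_{c+n}(\omega\xi^{-1},\xi)$ up to scalar" by "computing the Tate integrals $f_{\widehat{\Phi_{c+n}}}$ and $f_{\Phi_{c+n}(\omega\xi^{-1},\xi)}$ and comparing," your outline becomes a correct account of the proof.
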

\begin{proof}
	This is a combination of definitions (\ref{LocIntwDef}), Definition \ref{NNBasFin} and the computation Proposition \ref{CalFourFin}. We take the case (4) for example. Writing $\Psi_n=[1,\geq n] \otimes [1,\geq 0]$, we can define
	$$ f_n(s,\xi,\omega\xi^{-1};\kappa) = \int_{\F^{\times}} \Psi_n((0,t)\kappa) \norm[t]_{\F}^{1+2s} \omega^{-1}\xi^2(t) d^{\times}t. $$
	On the one hand, letting $\mu=\mu(\omega^{-1}\xi^2)$ we have by definition
\begin{align*}
	\Intw f_n(s,\xi,\omega\xi^{-1};\kappa) &= \int_{\F^{\times}} \widehat{\Psi_n}((0,t)\kappa) \norm[t]_{\F}^{1-2s} \omega\xi^{-2}(t) d^{\times}t \\
	&= q^{-n} \Cond(\psi)^{-1} \int_{\F^{\times}} \Psi_n((0,t \varpi_{\F}^{n+\cond(\psi)})\kappa) \norm[t]_{\F}^{1-2s} \omega\xi^{-2}(t) d^{\times}t \\
	&= (q^n \Cond(\psi))^{-(2s+i\mu)} f_n(-s,\omega\xi^{-1},\xi;\kappa), \forall \kappa \in \gp{K}.
\end{align*}
	On the other hand, we have
	$$ \Psi_n = \sum_{k=0}^{n-1} \varpi_{\F}^{-k}.1_{\vp^{n-k} \times \vo^{\times}} + \sum_{k=n}^{\infty} \varpi_{\F}^{-k}.1_{\F_1^2}, $$
	which implies, writing $f_n=f_n(s,\xi,\omega\xi^{-1})$,
	$$ f_n = \sum_{k=0}^{n-1} q^{-k(1-2s-i\mu)} \iota(1_{\vp^{n-k} \times \vo^{\times}}) + \frac{q^{-n(1-2s-i\mu)}}{1-q^{-(1-2s-i\mu)}} \iota(1_{\F_1^2}). $$
	Thus we get
	$$ \iota(1_{\F_1^2}) = (1-q^{-(1-2s-i\mu)}) f_0, \iota(1_{\vp^n \times \vo^{\times}}) = f_n - q^{-(1-2s-i\mu)} f_{n-1}, \forall n \geq 1. $$
	Consequently, we have the equations
	$$ \iota(1_{\vp \times \vo^{\times}}) - (q+1)^{-1} \iota(1_{\F_1^2}) = f_1 - (q+1)^{-1} (1+q^{-(2s+i\mu)}) f_0, $$
	$$ \iota(1_{\vp^n \times \vo^{\times}}) - q^{-1} \iota(1_{\vp^{n-1} \times \vo^{\times}}) = f_n - q^{-1}(1+q^{-(2s+i\mu)}) f_{n-1} + q^{-2(1+s+i\mu)} f_{n-2}, \forall n \geq 2. $$
	We finally conclude by the relation of $e_n$ and $\Phi_n$, and the computation
	$$ \Intw e(s,\xi,\omega\xi^{-1};0) = \frac{1-q^{-(1+2s+i\mu)}}{1-q^{-(1-2s-i\mu)}} \Cond(\psi)^{-(2s+i\mu)} e(-s,\omega\xi^{-1},\xi;0); $$
\begin{align*}
	\Intw e(s,\xi,\omega\xi^{-1};1) &= \Cond(\psi)^{\frac{1}{2}} \sqrt{\frac{q(1+q^{-1})}{1-q^{-1}}} \left\{ (q \Cond(\psi))^{-(2s+i\mu)} f_1(-s,\omega\xi^{-1},\xi) \right. \\
	&\quad \left. - (q+1)^{-1} (1+q^{-(2s+i\mu)}) \Cond(\psi)^{-(2s+i\mu)} f_0(-s,\omega\xi^{-1},\xi) \right\} \\
	&= (q \Cond(\psi))^{-(2s+i\mu)} \Cond(\psi)^{\frac{1}{2}} \sqrt{\frac{q(1+q^{-1})}{1-q^{-1}}} \left\{ f_1(-s,\omega\xi^{-1},\xi) \right. \\
	&\quad \left. - (q+1)^{-1} (1+q^{2s+i\mu}) f_1(-s,\omega\xi^{-1},\xi) \right\} \\
	&= (q \Cond(\psi))^{-(2s+i\mu)} e(-s,\omega\xi^{-1},\xi;1);
\end{align*}
\begin{align*}
	\Intw e(s,\xi,\omega\xi^{-1};n) &= \frac{q^{\frac{n}{2}} \Cond(\psi)^{\frac{1}{2}}}{1-q^{-1}} \left\{ (q^n \Cond(\psi))^{-(2s+i\mu)} f_n(-s,\omega\xi^{-1},\xi) \right. \\
	&\quad - q^{-1}(1+q^{-(2s+i\mu)}) (q^{n-1} \Cond(\psi))^{-(2s+i\mu)} f_{n-1}(-s,\omega\xi^{-1},\xi) \\
	&\quad \left. + q^{-2}q^{-(2s+i\mu)} (q^{n-2} \Cond(\psi))^{-(2s+i\mu)} f_{n-2}(-s,\omega\xi^{-1},\xi) \right\} \\
	&= (q^n \Cond(\psi))^{-(2s+i\mu)} e(-s,\omega\xi^{-1},\xi;n).
\end{align*}
	The asserted bound for $\mu'(iy,\xi,\omega\xi^{-1};n)$ is then obvious from the formulae.
\end{proof}

	\subsection{Global Estimation}
	
	We are ready to study the size of a $\gp{K}$-isotypic Eisenstein series. For simplicity of notations we write
	$$ \Eis(s,\xi,\omega\xi^{-1};\vec{n}) = \Eis(s,\xi,\omega\xi^{-1};e_{\vec{n}}) $$
where
\begin{itemize}
	\item $\vec{n}=(n_v)_v \in \oplus_{v \in \Place(\F)} \ag{Z}$ with $n_v$ parametrizing the $\gp{K}_v$-type as before;
	\item $e_{\vec{n}} = e_{\vec{n}}(\xi,\omega\xi^{-1}) = \otimes_v' e_{n_v}$ with $e_{n_v}$ being a unitary vector in the $\gp{K}_v$-isotypic part of $V_{0,\xi_v,\omega_v\xi_v^{-1}}$ with parameter $n_v$.
\end{itemize}
	Note that $\vec{n}$ does not determine $e_{\vec{n}}$. In fact, the dimension $d(\xi,\omega\xi^{-1};\vec{n})$ of $\gp{K}$-type $\vec{n}$ vectors in $V_{s,\xi,\omega\xi^{-1}}$ satisfies
\begin{itemize}
	\item $d(\xi,\omega\xi^{-1};\vec{n}) = \Pi_v d(\xi_v,\omega_v\xi_v^{-1};n_v)$, where $d(\xi_v,\omega_v\xi_v^{-1};n_v)$ is the dimension of $\gp{K}_v$-type $n_v$ vectors;
	\item $d(\xi_v,\omega_v\xi_v^{-1};n_v) \leq n_v+1$ for $v \mid \infty$ complex and $d(\xi_v,\omega_v\xi_v^{-1};n_v) \leq 1$ for $v \mid \infty$ real;
	\item $d(\xi_v,\omega_v\xi_v^{-1};n_v) = (q_v^{n_v}-q_v^{n_v-2}1_{n_v \geq 2})1_{n_v \geq \cond(\xi_v)+\cond(\omega_v\xi_v^{-1})}$ for $v < \infty$.
\end{itemize}
	Recall the height function $\Ht$ defined by
	$$ \Ht: \GL_2(\ag{A}) \to \ag{R}_+, \begin{pmatrix} t_1 & x \\ & t_2 \end{pmatrix} \kappa \mapsto \extnorm{\frac{t_1}{t_2}}_{\ag{A}}, \forall t_1,t_2 \in \ag{A}^{\times}, x \in \ag{A}, \kappa \in \gp{K}. $$
\begin{lemma}
	If $\gamma \in \GL_2(\F) - \gp{B}(\F)$, then we have $\Ht(\gamma g) \leq \Ht(g)^{-1}$.
\label{ShiftHtBd}
\end{lemma}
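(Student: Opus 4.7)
The plan is to reduce the global inequality, via the Bruhat decomposition, to a universal local statement about the Weyl element $w$, which one verifies by direct inspection of the Iwasawa form place-by-place.

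Since $\gamma \notin \gp{B}(\F)$, its lower-left entry $c$ lies in $\F^{\times}$, and the Bruhat decomposition gives
$$ \gamma = \begin{pmatrix} 1 & a/c \\ 0 & 1 \end{pmatrix} \begin{pmatrix} \det\gamma/c & \\ & c \end{pmatrix} w \begin{pmatrix} 1 & d/c \\ 0 & 1 \end{pmatrix} =: n_1 a_{\gamma} w n_2 \in \gp{N}(\F)\gp{A}(\F)w\gp{N}(\F). $$
Directly from the definition, $\Ht$ is left-invariant under $\gp{N}(\ag{A})$, and left multiplication by $\diag(\alpha_1,\alpha_2)$ multiplies $\Ht$ by $\norm[\alpha_1/\alpha_2]_{\ag{A}}$; for $\alpha_1,\alpha_2 \in \F^{\times}$ this factor equals $1$ by the product formula. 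Setting $g' := n_2 g$ (so $\Ht(g')=\Ht(g)$), the lemma reduces to the universal inequality
$$ \Ht(wg') \leq \Ht(g')^{-1}, \qquad \forall g' \in \GL_2(\ag{A}). $$

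Both sides factor as $\prod_v$ of local heights, so it is enough to prove $\Ht_v(wg'_v) \leq \Ht_v(g'_v)^{-1}$ at each place. Write an Iwasawa decomposition $g'_v = n_v \diag(t_1,t_2) k_v$ with $n_v$ having $(1,2)$-entry $x$. Conjugating $n_v$ through $w$ yields
$$ wg'_v = \begin{pmatrix} 0 & -t_2 \\ t_1 & xt_2 \end{pmatrix} k_v, $$
whose bottom row is $(t_1,xt_2)k_v$. Since $\gp{K}_v$ preserves the appropriate norm on $\F_v^2$ (Euclidean at a real place, Hermitian at a complex place, sup-norm at a finite place), the $(2,2)$-Iwasawa coordinate $t'_2$ of $wg'_v$ satisfies $\norm[t'_2]_v \geq \norm[t_1]_v$ — a one-line verification at each kind of place, keeping in mind that at a complex place the normalized absolute value $\norm[\cdot]_v$ is the square of the ordinary modulus. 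Combining this with the uniform identity $\Ht_v(h_v) = \norm[\det h_v]_v / \norm[t_2^{(h_v)}]_v^2$ (again an immediate case-by-case check) and $\norm[\det(wg'_v)]_v = \norm[t_1 t_2]_v$, we obtain
$$ \Ht_v(wg'_v) = \frac{\norm[t_1 t_2]_v}{\norm[t'_2]_v^2} \leq \frac{\norm[t_1 t_2]_v}{\norm[t_1]_v^2} = \norm[t_2/t_1]_v = \Ht_v(g'_v)^{-1}. $$
Taking the product over $v$ completes the proof.

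No real obstacle is anticipated: the whole argument is standard Iwasawa bookkeeping, and the only subtle point is keeping the normalization conventions of $\norm[\cdot]_v$ straight at the complex archimedean place.
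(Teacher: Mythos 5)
Your proof is correct and follows essentially the same route as the paper's: Bruhat decomposition together with the product formula reduce the statement to a local inequality involving the Weyl element $w$, which is then verified via Iwasawa coordinates place by place. The paper phrases the reduced statement as $\Ht(wn(x)) \leq 1$ for $x \in \ag{A}$ and leaves the local check to the reader, whereas you prove the equivalent inequality $\Ht(wg') \leq \Ht(g')^{-1}$ and carry out the local $\gp{K}_v$-invariant-norm verification explicitly --- a more detailed write-up of the same argument.
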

\begin{proof}
	As we can take $\gamma=wn(\alpha)$ for some $\alpha \in \F$, it suffices to prove $\Ht(wn(x)) \leq 1$ for all $x \in \ag{A}$. In fact locally at each $v$, $\Ht_v(wn(x_v)) \leq 1$. We leave this simple verification to the reader.
\end{proof}
\begin{lemma}
	Let $K$ be a compact subset of $\GL_2(\F) \gp{Z}(\ag{A}) \backslash \GL_2(\ag{A})$. Then there is a constant $C=C(K,\F,(n_v)_{v < \infty},\omega^{-1}\xi^2)$ such that for $y \in \ag{R}$
	$$ \int_{K} \norm[\Eis(iy,\xi,\omega\xi^{-1};\vec{n})(g)]^2 dg \ll C+ \log \Cond(\omega^{-1}\xi^2 \norm_{\ag{A}}^{2iy}) + \sum_{\substack{v \mid \infty \\ n_v \neq 0}} \log (\norm[n_v]+1). $$
	$C$ depends on $\omega^{-1}\xi^2$ only if the later is quadratic with a non trivial Siegel zero.
\end{lemma}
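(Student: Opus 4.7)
The plan is to apply the Maass--Selberg relation (\ref{MSRel}) at $s = \sigma + iy$ with $\sigma \to 0^+$ to the unit flat section $e_{\vec{n}}$, and to compare the resulting truncated $L^2$-norm with the original integral over $K$. Since $K$ is compact modulo $\GL_2(\F)\gp{Z}(\ag{A})$, one can find $c_0 = c_0(K) \geq 1$ with $c_0^{-1} \leq \Ht(g) \leq c_0$ for $g \in K$; combined with Lemma \ref{ShiftHtBd}, this forces $\Lambda^{c_0}\Eis(iy,\xi,\omega\xi^{-1};\vec{n})(g) = \Eis(iy,\xi,\omega\xi^{-1};\vec{n})(g)$ on $K$, whence $\int_K |\Eis(iy,\xi,\omega\xi^{-1};\vec{n})(g)|^2 dg \leq \Norm[\Lambda^{c_0}\Eis(iy,\xi,\omega\xi^{-1};\vec{n})]^2$, which is exactly what (\ref{MSRel}) computes.

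By Corollaries \ref{LocIntwCalCpx}, \ref{LocIntwCalReal} and Lemma \ref{LocIntwCalFin}, the intertwining operator acts on $e_{\vec{n}}$ as multiplication by the scalar
$$ \lambda_{\vec{n}}(s) = \frac{\Lambda(1-2s,\omega\xi^{-2})}{\Lambda(1+2s,\omega^{-1}\xi^2)}\prod_v \mu_v(s,\xi_v,\omega_v\xi_v^{-1};n_v), $$
and $|\lambda_{\vec{n}}(iy)| = 1$ (the $\Lambda$-ratio has modulus $1$ because $\omega\xi^{-2}$ and $\omega^{-1}\xi^2$ are complex conjugates, and each local $|\mu_v(iy)|=1$). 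So the first summand in (\ref{MSRel}) is a $0/0$ expression at $\sigma = 0$, and L'H\^opital yields
$$ \lim_{\sigma \to 0^+}\frac{c_0^{2\sigma} - c_0^{-2\sigma}|\lambda_{\vec{n}}(\sigma+iy)|^2}{2\sigma} = 2\log c_0 \ - \ \Re\frac{\lambda_{\vec{n}}'(iy)}{\lambda_{\vec{n}}(iy)}. $$

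The core task is thus to bound $|(\log\lambda_{\vec{n}})'(iy)|$, which splits as the log-derivative of the $\Lambda$-ratio plus the sum of local terms $\mu_v'/\mu_v(iy)$. The $\Lambda$-piece contributes $\ll \log\Cond(\omega^{-1}\xi^2 \norm_{\ag{A}}^{2iy})$ by the standard estimate on $|(\Lambda'/\Lambda)(1 + 2iy,\omega^{-1}\xi^2)|$ on the edge of the critical strip; the implicit constant is absolute unless $\omega^{-1}\xi^2$ is a real quadratic character with a Siegel zero, which is exactly the exceptional dependence of $C$ on $\omega^{-1}\xi^2$ permitted in the statement. The local pieces are bounded by the three $K$-isotypic calculations: Corollaries \ref{LocIntwCalCpx}(2) and \ref{LocIntwCalReal}(2) give archimedean contributions $\ll \log(|n_v|+1) \cdot 1_{n_v \neq 0}$, and Lemma \ref{LocIntwCalFin} gives finite-place contributions depending only on $\F$, $(n_v)_{v<\infty}$ and $\omega^{-1}\xi^2$, which are absorbed into $C$.

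The second summand of (\ref{MSRel}) survives only when $\omega^{-1}\xi^2(\ag{A}^{(1)}) = 1$; under the normalization of Remark \ref{CharNorm} this forces $\omega = \xi^2$ and $\mu(\omega^{-1}\xi^2) = 0$, so the expression reduces to $\Im\bigl(\Pairing{e_{\vec{n}}}{\Intw(iy)e_{\vec{n}}} c_0^{2iy}\bigr)/y$. For $|y|$ bounded away from $0$ this is trivially $O(1/|y|)$; near $y = 0$ a further application of L'H\^opital produces only another copy of the logarithmic derivative of $\lambda_{\vec{n}}$ already controlled above. Assembling these bounds via (\ref{MSRel}) gives the stated inequality. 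The principal obstacle is the uniform-in-$y$ control of $(\Lambda'/\Lambda)(1+2iy,\omega^{-1}\xi^2)$ on the line $\Re s = 1$: this is a classical estimate but the possible Siegel zero of a real quadratic $\omega^{-1}\xi^2$ is the reason $C$ must be allowed to depend on $\omega^{-1}\xi^2$ in that case.
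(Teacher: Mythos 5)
Your proposal follows essentially the same route as the paper: truncation on a Siegel domain, Maass--Selberg, the $\sigma\to 0^+$ limit giving $2\log c_0$ minus the logarithmic derivative of the intertwining scalar, the factorization into the global $\Lambda$-ratio and local $\mu_v$'s, the edge-of-critical-strip bound on $\Lambda'/\Lambda(1+2iy,\omega^{-1}\xi^2)$ with the Siegel-zero caveat, and a separate treatment of the $\omega=\xi^2$ term. The only loose end is your handling of the $\omega=\xi^2$ term ``near $y=0$'': L'H\^opital only identifies the limit at $y=0$, whereas to get a bound uniform for small $y$ one should, as the paper does, write $(\mu(iy)+1)/y=\int_0^1\mu'(ity)\,dt$ and take a sup over $t\in[0,1]$ --- a cosmetic repair, not a conceptual one.
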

\begin{proof}
	By the reduction theory, there is $0<c_{\F}<1$ depending only on $\F$ such that the Siegel domain $S_{c_{\F}}$ contains a fundamental domain of $\GL_2(\F) \gp{Z}(\ag{A}) \backslash \GL_2(\ag{A})$. Fixing $K$, we can find $C > c_{\F}^{-1}$ and assume $K \subset S_{c_{\F}} - S_C$. Consequently, we get by Lemma \ref{ShiftHtBd}
	$$ \Eis(iy,\xi,\omega\xi^{-1};\vec{n})(g) - \Lambda^C \Eis(iy,\xi,\omega\xi^{-1};\vec{n})(g) = \left\{ \begin{matrix} 0 & \text{if } g \in S_{c_{\F}} - S_C \\ \Eis(iy,\xi,\omega\xi^{-1};\vec{n})_{\gp{N}}(g) & \text{if } g \in S_C. \end{matrix} \right. $$
	In particular, we can apply (\ref{MSRel}) and get
\begin{align*}
	\int_{K} \norm[\Eis(iy,\xi,\omega\xi^{-1};\vec{n})(g)]^2 dg &= \int_{K} \norm[\Lambda^C\Eis(iy,\xi,\omega\xi^{-1};\vec{n})(g)]^2 dg \leq \extNorm{\Lambda^C\Eis(iy,\xi,\omega\xi^{-1};\vec{n})}^2 \\
	&= 2 \log C - \Pairing{\Intw(-iy,\omega\xi^{-1},\xi) \Intw'(iy,\xi,\omega\xi^{-1})e_{\vec{n}}}{e_{\vec{n}}}_0 \\
	&\quad + \frac{1_{\omega=\xi^2}}{y} \Im \left( C^{2iy} \Pairing{e_{\vec{n}}}{\Intw(iy,\xi,\xi)e_{\vec{n}}}_0 \right),
\end{align*}
	where $\Intw'$ is the partial derivative of $\Intw$ with respect to $s$, and the pairing $\Pairing{\cdot}{\cdot}_0$ is taken by identifying everything in the Hilbert space $\Res_{\gp{K}}^{\GL_2(\ag{A})} V_{0,\xi,\omega\xi^{-1}}$. Note that we have used Remark \ref{CharNorm}, i.e. $\omega^{-1}\xi^2(\ag{A}^{(1)})=1 \Rightarrow \mu(\omega^{-1}\xi^2)=0$ (hence $\omega=\xi^2$). From the calculations in Corollary \ref{LocIntwCalCpx}, \ref{LocIntwCalReal} and Lemma \ref{LocIntwCalFin}, if we write
	$$ \mu(s,\xi,\omega\xi^{-1};\vec{n}) = \mu_{\F}(s,\omega^{-1}\xi^2) \prod_v \mu_v(s,\xi_v,\omega_v\xi^{-1};n_v) \text{ with} $$
	$$ \mu_{\F}(s,\omega^{-1}\xi^2) = \frac{\Lambda(1-2s, \omega\xi^{-2})}{\Lambda(1+2s, \omega^{-1}\xi^2)} $$
	and $\mu_v(s,\xi_v,\omega_v\xi^{-1};n_v)$ defined locally in the above mentioned lemmas, then we find
	$$ \Pairing{\Intw(-iy,\omega\xi^{-1},\xi) \Intw'(iy,\xi,\omega\xi^{-1})e_{\vec{n}}}{e_{\vec{n}}}_0 = \frac{\mu'(iy,\xi,\omega\xi^{-1};\vec{n})}{\mu(iy,\xi,\omega\xi^{-1};\vec{n})}, $$
	$$ \Pairing{e_{\vec{n}}}{\Intw(iy,\xi,\xi)e_{\vec{n}}}_0 = \overline{\mu(iy,\xi,\xi;\vec{n})}. $$
	It is a classical and subtle result of the analytic number theory that
\begin{equation}
	\extnorm{\frac{\Lambda'(1+2iy,\chi)}{\Lambda(1+2iy,\chi)}} \ll \log \Cond(\chi \norm_{\ag{A}}^{2iy}) + [\F:\ag{Q}] + \frac{1}{1-\beta_{\chi}},
\label{LogDerLBd}
\end{equation}
	where the last term is present only if $\chi\norm_{\ag{A}}^{i\alpha_{\chi}}, \alpha_{\chi} \in \ag{R}$ is quadratic admitting a Siegel zero $\beta_{\chi}$.
	It follows that
	$$ \extnorm{\frac{\mu_{\F}'(iy,\omega^{-1}\xi^2)}{\mu_{\F}(iy,\omega^{-1}\xi^2)}} \ll \log \Cond(\omega^{-1}\xi^2 \norm_{\ag{A}}^{2iy}) + [\F:\ag{Q}] + \frac{1}{1-\beta_{\omega^{-1}\xi^2}}. $$
	Together with the local bounds obtained in Corollary \ref{LocIntwCalCpx}, \ref{LocIntwCalReal} and Lemma \ref{LocIntwCalFin}, we deduce that
\begin{align*}
	\extnorm{\frac{\mu'(iy,\xi,\omega\xi^{-1};\vec{n})}{\mu(iy,\xi,\omega\xi^{-1};\vec{n})}} &\ll \sum_{\substack{v \mid \infty \\ n_v \neq 0}} \log (\norm[n_v]+1) + \sum_{\substack{v < \infty \\ n_v \neq 0}} (n_v+2) \log q_v + \frac{1}{1-\beta_{\omega^{-1}\xi^2}} \\
	&\quad + [\F:\ag{Q}] + \log \Cond(\omega^{-1}\xi^2 \norm_{\ag{A}}^{2iy}).
\end{align*}
	Since we have $\mu(0,\xi,\xi;\vec{n})=-1$ and
	$$ \extnorm{\frac{C^{2iy}-C^{-2iy}}{2y}} = \log C \cdot \extnorm{\int_{-1}^1 e^{2ity \log C} dt} \leq 2 \log C, $$
	$$ \extnorm{\frac{\mu(iy,\xi,\xi;\vec{n})+1}{y}} = \extnorm{\int_0^1 \mu'(ity,\xi,\xi;\vec{n}) dt} \leq \max_{0 \leq t \leq 1} \norm[\mu'(ity,\xi,\xi;\vec{n})], $$
	we deduce that
\begin{align*}
	\extnorm{\frac{\Im ( C^{2iy} \overline{\mu(iy,\xi,\xi;\vec{n})} )}{y}} &\ll \min \left( 2 \log C, \frac{1}{\norm[y]} \right) \\
	&+ \min \left( [\F:\ag{Q}] + \log \Cond(\norm_{\ag{A}}^{2iy}) + \sum_{\substack{v \mid \infty \\ n_v \neq 0}} \log (\norm[n_v]+1) + \sum_{\substack{v < \infty \\ n_v \neq 0}} (n_v+2) \log q_v, \frac{1}{\norm[y]} \right).
\end{align*}
	Combining the two bounds together, we conclude.
\end{proof}
\begin{remark}
	The concerned bound (\ref{LogDerLBd}) follows from, for example, either the argument \cite[Proposition 5.7 (2)]{IK04} or \cite[Theorem 16]{Te95}, both being based on the zero-free region \cite[Theorem 5.35]{IK04}. The tricky Siegel zero is the source of the famous ineffectiveness of Siegel's lower bound.
\end{remark}

	Recall that at each $v \mid \infty$, we have a Laplacian operator
	$$ \Delta_v = - \Casimir_{\gp{Z}(\F_v) \backslash \GL_2(\F_v)} - 2 \Casimir_{\gp{K}_v}, $$
	where $\Casimir_{\gp{G}}$ denotes the Casimir element of the Lie group $\gp{G}$.
\begin{itemize}
	\item For $\F_v=\ag{C}$, $\Delta_v$ acts on the $\gp{K}_v$-isotypic type-$n_v$ subspace of $V_{iy,\xi_v,\omega_v\xi_v^{-1}}$ as multiplication by
	$$ \frac{1+(2y+\mu(\omega_v^{-1}\xi_v^2))^2}{4}+\frac{2n_v(n_v+2)-n_0^2}{4}, $$
	where $n_0=n_0(\omega_v^{-1}\xi_v^2) \in \ag{Z}$ is such that $\omega_v^{-1}\xi_v^2(e^{i\alpha})=e^{in_0\alpha}$.
	\item For $\F_v=\ag{R}$, $\Delta_v$ acts on the $\gp{K}_v$-isotypic type-$n_v$ subspace of $V_{iy,\xi_v,\omega_v\xi_v^{-1}}$ as multiplication by
	$$ \frac{1+(2y+\mu(\omega_v^{-1}\xi_v^2))^2}{4}+\frac{n_v^2}{2}. $$
\end{itemize}
	We get an elliptic operator on $\Pi_{v \mid \infty} \gp{Z}(\F_v) \backslash \GL_2(\F_v)$
	$$ \Delta_{\infty} = \sum_{v \mid \infty} \Delta_v. $$
	By Sobolev's inequalities, there is $N \in \ag{N}$ such that for any compact subset $K \subset \GL_2(\F)\gp{Z}(\ag{A}) \backslash \GL_2(\ag{A})$ we have for smooth function $\varphi$
	$$ \sup_{g \in K} \norm[\varphi(g)] \ll_K \left( \int_K \extnorm{(1+\Delta_{\infty})^N.\varphi(g)}^2 dg \right)^{\frac{1}{2}}. $$
\begin{corollary}
	Let $\lambda(iy,\xi,\omega\xi^{-1};\vec{n}_{\infty})$ be the eigenvalue of $\Delta_{\infty}$ on $\Eis(iy,\xi,\omega\xi^{-1};\vec{n})$. Under the condition of the lemma, we have
	$$ \sup_{g \in K} \norm[\Eis(iy,\xi,\omega\xi^{-1};\vec{n})(g)] \ll_{K,\F,(n_v)_{v<\infty},\omega^{-1}\xi^2} (1+\lambda(iy,\xi,\omega\xi^{-1};\vec{n}_{\infty}))^N \log (1+\lambda(iy,\xi,\omega\xi^{-1};\vec{n}_{\infty})). $$
\label{SobBdContKiso}
\end{corollary}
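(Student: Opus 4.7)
The plan is to combine the Sobolev inequality stated just before the corollary with the $\intL^2$-bound from the preceding lemma. The key observation is that $\Eis(iy,\xi,\omega\xi^{-1};\vec{n})$ is an eigenfunction of $\Delta_\infty$ with eigenvalue $\lambda = \lambda(iy,\xi,\omega\xi^{-1};\vec{n}_\infty)$: the formulae just preceding the corollary show that $\Delta_\infty$ acts on $e_{\vec{n}} \in V_{iy,\xi,\omega\xi^{-1}}^{\infty}$ as multiplication by $\lambda$, and $f \mapsto \Eis(f)$ is $\GL_2(\ag{A})$-equivariant, hence intertwines the actions of the archimedean Casimirs and thus of $\Delta_\infty$.

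From this and the Sobolev estimate, I would derive
$$ \sup_{g \in K} \norm[\Eis(iy,\xi,\omega\xi^{-1};\vec{n})(g)] \ll_K (1+\lambda)^N \left( \int_K \norm[\Eis(iy,\xi,\omega\xi^{-1};\vec{n})(g)]^2 \, dg \right)^{1/2}. $$
The preceding lemma then bounds the $\intL^2$-integral by $C + \log \Cond(\omega^{-1}\xi^2 \norm_{\ag{A}}^{2iy}) + \sum_{v \mid \infty,\, n_v \neq 0} \log(\norm[n_v]+1)$, where $C$ absorbs the dependence on $K$, $\F$, $(n_v)_{v<\infty}$, and, when applicable, on $\omega^{-1}\xi^2$ through a Siegel zero.

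The remaining task is to show that the quantity on the right is $\ll \log(1+\lambda)$. Reading off the explicit eigenvalue formulae displayed before the corollary gives
$$ 1 + \lambda \;\gg\; 1 + \sum_{v \mid \infty} \bigl(2y + \mu(\omega_v^{-1}\xi_v^2)\bigr)^2 + \sum_{v \mid \infty} n_v^2, $$
so $\log(1+\lambda)$ dominates $\log(1+\norm[2y+\mu(\omega_v^{-1}\xi_v^2)])$ and $\log(\norm[n_v]+1)$ at each archimedean place. Since the finite-place part of $\Cond(\omega^{-1}\xi^2 \norm_{\ag{A}}^{2iy})$ is just $\Cond(\omega^{-1}\xi^2)$, a constant depending only on parameters we are allowed to lose control over, the desired comparison $\log \Cond(\omega^{-1}\xi^2 \norm_{\ag{A}}^{2iy}) + \sum_{v\mid\infty,\, n_v\neq 0} \log(\norm[n_v]+1) \ll \log(1+\lambda)$ follows. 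Plugging in yields $\sup_K \norm[\Eis] \ll (1+\lambda)^N \sqrt{\log(1+\lambda)}$, which is a fortiori bounded by the claimed $(1+\lambda)^N \log(1+\lambda)$.

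The argument is essentially routine once the eigenvalue/Sobolev connection is in place; the only mildly delicate point is the last comparison, where one must unpack the analytic conductor to verify that its archimedean piece is governed by exactly the $(2y+\mu)^2$ terms appearing in $\lambda$, while the remaining (finite-place) contribution is absorbed into the implicit constant.
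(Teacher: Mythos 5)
Your proposal is correct and follows essentially the same route as the paper: apply the Sobolev inequality stated just before the corollary (using that the Eisenstein series is a $\Delta_\infty$-eigenfunction), feed in the $\intL^2$-bound from the preceding lemma, and then observe that the right-hand side of that lemma's estimate is $\ll \log(1+\lambda)$ by unpacking the archimedean piece of the analytic conductor and the eigenvalue formulae. You also correctly note in passing that the argument actually yields the slightly stronger exponent $\sqrt{\log(1+\lambda)}$, which of course implies the stated bound (here $\lambda$ is bounded below away from $0$, so the ratio of $\sqrt{\log}$ to $\log$ is absorbed in the implied constant); the paper simply records the weaker form.
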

\begin{proof}
	We only need to notice that the right hand side of the inequality in the lemma is $\ll \log (1+\lambda(iy,\xi,\omega\xi^{-1};\vec{n}_{\infty}))$.
\end{proof}
\begin{remark}
	It is easier to get the similar bounds
	$$ \sup_{g \in K} \norm[\Eis(\sigma+iy,\xi,\omega\xi^{-1};\vec{n})(g)] \ll_{K,\F,(n_v)_{v<\infty},\omega^{-1}\xi^2} (\norm[y]+\Norm[\vec{n}])^N $$
	for $\norm[y] \gg 1$ and uniform for $\sigma$ lying in any compact sub-interval of $\ag{R}$.
\end{remark}

	We are now ready to prove the main results.
\begin{proof}{(Proposition \ref{FourInvIncompTheta})}
	We introduce the Fourier coefficients $\hat{f}(s,\xi,\omega\xi^{-1};\vec{n})$ which are entire functions in $s$ such that
	$$ \hat{f}(s,\xi,\omega\xi^{-1})(\kappa) = \sum_{\vec{n}} \hat{f}(s,\xi,\omega\xi^{-1};\vec{n}) e_{\vec{n}}(\xi,\omega\xi^{-1};\kappa), \forall \kappa \in \gp{K} $$
with bound for any $A > 0$ and $\Re s$ lying in any fix compact set
	$$ \extnorm{\hat{f}(s,\xi,\omega\xi^{-1};\vec{n})} \ll_{f,A} (\norm[\Im s] + \Norm[\vec{n}])^{-A}. $$
We then have uniformly for $\Re s \geq c \gg 1$ and with normal convergence in $g$
	$$ \Eis(\hat{f}(s,\xi,\omega\xi^{-1};\vec{n}))(g) = \sum_{\vec{n}} \hat{f}(s,\xi,\omega\xi^{-1};\vec{n}) \Eis(s,\xi,\omega\xi^{-1};\vec{n})(g). $$
It then holds for $s$ lying in the complement of the possible poles of $\Eis(s,\xi,\omega\xi^{-1};\vec{n})$ by the uniqueness of analytic continuation and the normal convergence of the right hand side. Hence we get
	$$ P(f)(g) = \sum_{\xi} \sum_{\vec{n}} \int_{\Re s = c} \hat{f}(s,\xi,\omega\xi^{-1};\vec{n}) \Eis(s,\xi,\omega\xi^{-1};\vec{n})(g) \frac{ds}{2\pi i}, $$
and we can shift the integral to $\Re s=0$ and get the desired strong Fourier inversion formula.
\end{proof}
\begin{lemma}
	(S3) is verified for $\Cont_c^{\infty}(\GL_2,\omega)$. Precisely, for $h \in \Cont_c^{\infty}(\GL_2,\omega)$ define
	$$ \ProjG_{iy,\xi,\omega\xi^{-1}}(h) = \sum_{\vec{n}} \Pairing{h}{\Eis(iy,\xi,\omega\xi^{-1};\vec{n})} e(iy,\xi,\omega\xi^{-1}; \vec{n}) \in V_{iy,\xi,\omega\xi^{-1}}^{\infty}, $$
where $e(s,\xi,\omega\xi^{-1}; \vec{n})$ is the flat section induced by $e_{\vec{n}}=e_{\vec{n}}(\xi,\omega\xi^{-1})$. Then we have for almost every $y$
	$$ \ProjG_{iy,\xi,\omega\xi^{-1}}(h) = \ProjF_{iy,\xi,\omega\xi^{-1}}(h) $$
as vectors in $V_{iy,\xi,\omega\xi^{-1}}$.
\label{FFforCcInf}
\end{lemma}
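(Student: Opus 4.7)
The plan mimics the ``disguised'' classical argument reviewed in \S 2.1. Fix $h\in\Cont_c^{\infty}(\GL_2,\omega)$. For every $P(f)\in\Cl_c$ (Definition \ref{FInvSpGL2}) I would compute $\Pairing{P(f)}{h}$ in two ways and equate. The first way substitutes into the inner product the strong Fourier inversion of $P(f)$ from Proposition \ref{FourInvIncompTheta}, in the $\gp{K}$-isotypic refinement derived in the preceding proof. Fubini is legitimate because $h$ has compact support, $\Eis(iy,\xi,\omega\xi^{-1};\vec{n})$ is controlled on $\supp h$ by Corollary \ref{SobBdContKiso}, and the Mellin coefficients $\hat f(iy,\xi,\omega\xi^{-1};\vec{n})$ decay rapidly in $(\norm[y],\Norm[\vec{n}])$ by the smoothness and compact support of $f$. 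Unwinding the definition of $\ProjG$ yields
$$\Pairing{P(f)}{h}=\sum_{\xi}\int_{-\infty}^{\infty}\Pairing{\hat f(iy,\xi,\omega\xi^{-1})}{\ProjG_{iy,\xi,\omega\xi^{-1}}(h)}_{iy}\frac{dy}{2\pi}+c_{\F}\sum_{\chi^2=\omega}\Pairing{P(f)}{\chi\circ\det}\,\overline{\Pairing{h}{\chi\circ\det}}.$$

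The second way applies the Plancherel formula from \S 2.2 together with $\ProjF_{iy,\xi,\omega\xi^{-1}}(P(f))=\hat f(iy,\xi,\omega\xi^{-1})+\Intw\hat f(-iy,\omega\xi^{-1},\xi)$, producing
$$\Pairing{P(f)}{h}=\sum_{\xi}\int_{0}^{\infty}\Pairing{\hat f(iy,\xi)+\Intw\hat f(-iy,\omega\xi^{-1})}{\ProjF_{iy,\xi,\omega\xi^{-1}}(h)}_{iy}\frac{dy}{2\pi}+c_{\F}\sum_{\chi^2=\omega}\Pairing{P(f)}{\chi\circ\det}\,\overline{\Pairing{h}{\chi\circ\det}},$$
the discrete residuals matching tautologically from $\ProjF_{\chi\circ\det}(\cdot)=c_{\F}^{1/2}\Pairing{\cdot}{\chi\circ\det}\,e_{\chi}$. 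To bring the first expression to the same shape, I fold the integral from $(-\infty,\infty)$ to $(0,\infty)$: the substitution $y\mapsto -y$, combined with $\Eis(\Intw\,\cdot\,)=\Eis(\,\cdot\,)$, the relabelling $\xi\leftrightarrow\omega\xi^{-1}$, and the unitarity $\norm[\mu(iy,\xi,\omega\xi^{-1};\vec n)]=1$ from Corollaries \ref{LocIntwCalCpx}, \ref{LocIntwCalReal} and Lemma \ref{LocIntwCalFin}, gives the identity $\Intw\ProjG_{-iy,\omega\xi^{-1},\xi}(h)=\ProjG_{iy,\xi,\omega\xi^{-1}}(h)$. The folded first expression then reads identically like the second except that $\ProjF$ is replaced by $\ProjG$.

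Subtracting gives, for every $f$,
$$\sum_{\xi}\int_{0}^{\infty}\Pairing{\hat f(iy,\xi)+\Intw\hat f(-iy,\omega\xi^{-1})}{\ProjG_{iy,\xi,\omega\xi^{-1}}(h)-\ProjF_{iy,\xi,\omega\xi^{-1}}(h)}_{iy}\frac{dy}{2\pi}=0.$$
By Mellin inversion, as $f$ ranges over $\Cont_c^{\infty}(\gp{N}(\ag{A})\gp{B}(\F)\backslash\GL_2(\ag{A}),\omega)$, the sections $y\mapsto\hat f(iy,\xi,\omega\xi^{-1})$ sweep out a dense subspace of the $\intL^2$-direct integral of the bundle $\{V_{iy,\xi,\omega\xi^{-1}}\}_{y\geq 0}$, and unitarity of $\Intw$ on the axis extends the denseness to the combined sections $\hat f(iy,\xi)+\Intw\hat f(-iy,\omega\xi^{-1})$. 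Consequently $\ProjG_{iy,\xi,\omega\xi^{-1}}(h)=\ProjF_{iy,\xi,\omega\xi^{-1}}(h)$ for a.e.\ $y\geq 0$, and the case $y<0$ follows by the same intertwining symmetry. The delicate step is the last one: one must upgrade $\intL^2$-density in the direct-integral bundle to a.e.\ pointwise equality of two measurable sections of $\{V_{iy,\xi,\omega\xi^{-1}}\}$. This is the $\GL_2$-analogue of the density of $\hat V$ exploited classically in \S 2.1, and is precisely the mechanism that forces the abstractly $\intL^2$-extended projector $\ProjF$ to coincide with the concretely defined $\ProjG$ on compactly-supported smooth vectors.
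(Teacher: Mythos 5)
Your proposal is correct and follows essentially the same route as the paper's own proof: insert the strong Fourier inversion of $\Pcare(f)$ from Proposition \ref{FourInvIncompTheta} into $\Pairing{\Pcare(f)}{h}$, compare with the Plancherel formula, and conclude by density of $\{\ProjF_{iy,\xi,\omega\xi^{-1}}(\Pcare(f))\}$ in the $\intL^2$-sections of the continuous-spectrum bundle. The only stylistic difference is that you carry out the folding from $(-\infty,\infty)$ to $(0,\infty)$ explicitly, while the paper absorbs it into the definition $\ProjF_{iy,\xi,\omega\xi^{-1}}(\Pcare(f))=\hat f(iy,\xi,\omega\xi^{-1})+\Intw\hat f(-iy,\omega\xi^{-1},\xi)$; the ``delicate step'' you flag at the end is handled in the paper simply by observing that the rapid decay makes $\ProjG_{iy,\xi,\omega\xi^{-1}}(h)$ a square-integrable section, so that orthogonality to a dense subspace forces it to agree with $\ProjF_{iy,\xi,\omega\xi^{-1}}(h)$ almost everywhere.
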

\begin{proof}
	By integration by parts we get for any $A \in \ag{N}$
	$$ \Pairing{h}{\Eis(iy,\xi,\omega\xi^{-1};\vec{n})} = \lambda(iy,\xi,\omega\xi^{-1};\vec{n}_{\infty})^{-A} \Pairing{\Delta_{\infty}^A h}{\Eis(iy,\xi,\omega\xi^{-1};\vec{n})}, $$
which is rapidly decreasing in $(y,\vec{n})$ using Corollary \ref{SobBdContKiso}. Hence $\ProjG_{iy,\xi,\omega\xi^{-1}}(h)$ is converging, well-defined and rapidly decreasing in $y$ as a measurable section. By Proposition \ref{FourInvIncompTheta} and Fubini, we can insert the Fourier expansion of an incomplete theta series and interchange the summation to get
	$$ \Pairing{h}{\Pcare(f)} = \sum_{\xi} \int_0^{\infty} \Pairing{\ProjG_{iy,\xi,\omega\xi^{-1}}(h)}{\ProjF_{iy,\xi,\omega\xi^{-1}}(\Pcare(f))}_{iy,\xi,\omega\xi^{-1}} \frac{dy}{2\pi} + \cdots. $$
	On the other hand, the Plancherel formula gives
	$$ \Pairing{h}{\Pcare(f)} = \sum_{\xi} \int_0^{\infty} \Pairing{\ProjF_{iy,\xi,\omega\xi^{-1}}(h)}{\ProjF_{iy,\xi,\omega\xi^{-1}}(\Pcare(f))}_{iy,\xi,\omega\xi^{-1}} \frac{dy}{2\pi} + \cdots. $$
	The rapid decay of the section $\ProjG_{iy,\xi,\omega\xi^{-1}}(h)$ implies its square-integrability. Hence $\ProjG_{iy,\xi,\omega\xi^{-1}}(h) - \ProjF_{iy,\xi,\omega\xi^{-1}}(h)$ is a square-integrable section orthogonal to the sections $\ProjF_{iy,\xi,\omega\xi^{-1}}(\Pcare(f))$ for all $\Pcare(f)$. As $\Pcare(f)$ is dense in the continuous part, we must have $\ProjG_{iy,\xi,\omega\xi^{-1}}(h) - \ProjF_{iy,\xi,\omega\xi^{-1}}(h)=0$ as square-integrable sections, which implies the equality almost everywhere in $y$.
\end{proof}
\begin{lemma}
	Let $\varphi \in V_{\rpR_{\omega}}^{\infty}$ be represented by a smooth function. Then the Fourier expansion of $\varphi$
	$$ \sum_{\pi \text{ cusp}} \sum_{\vec{n}} \ProjP_{\pi}(\varphi)[\vec{n}](g) + \sum_{\chi^2=\omega} \ProjP_{\chi \circ \det}(\varphi)(g) + \sum_{\xi} \int_0^{\infty} \sum_{\vec{n}} \ProjP_{iy,\xi,\omega\xi^{-1}}(\varphi)[\vec{n}](g) \frac{dy}{2\pi} $$
converges normally to a continuous (smooth) function. Here ``$[\vec{n}]$'' is a natural parametrization of $\gp{K}$-isotypic types defined in Definition \ref{KProjParam}.
\label{NConvFES}
\end{lemma}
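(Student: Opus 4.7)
The plan is to combine the polynomial bound on $K$-isotypic Eisenstein series on compact sets (Corollary \ref{SobBdContKiso}) with the rapid decay of spectral coefficients of a smooth vector, matched via Cauchy--Schwarz. First I observe that since $\varphi$ is smooth, it is invariant under some open compact subgroup $\gp{K}_0 \subset \GL_2(\ag{A}_{\fin})$, so in the spectral decomposition only finitely many Hecke characters $\xi$ contribute (their conductors being bounded in terms of $\gp{K}_0$), and for each such $\xi$ only a finite set of finite-place $K$-type tuples $(n_v)_{v<\infty}$ give a nonzero contribution. The sum over $\xi$ and over $(n_v)_{v<\infty}$ is therefore effectively finite, so the implicit constants from Corollary \ref{SobBdContKiso} (which are allowed to depend on $\F$, $(n_v)_{v<\infty}$, $\omega^{-1}\xi^2$, and the compact $K$) can be absorbed into a single constant depending on $\varphi$ and $K$ alone.

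Next, using that $\ProjF_{iy,\xi,\omega\xi^{-1}}$ is a $\GL_2(\ag{A})$-intertwiner and hence commutes with $\Delta_{\infty}$, write
$$ \ProjF_{iy,\xi,\omega\xi^{-1}}(\varphi) = \sum_{\vec{n}} c_{\vec{n}}(y) \, e(iy,\xi,\omega\xi^{-1};\vec{n}), \qquad \lambda_{\vec{n},y} := \lambda(iy,\xi,\omega\xi^{-1};\vec{n}_{\infty}). $$
For every $A \in \ag{N}$ the vector $(1+\Delta_{\infty})^A \varphi$ is still in $V_{\rpR_{\omega}}$ by smoothness, and Plancherel applied to it yields
$$ \sum_{\xi,\vec{n}} \int_0^{\infty} (1+\lambda_{\vec{n},y})^{2A} \norm[c_{\vec{n}}(y)]^2 \frac{dy}{2\pi} \leq \Norm[(1+\Delta_{\infty})^A \varphi]^2 < \infty. $$
Combining this with Corollary \ref{SobBdContKiso}, which supplies
$$ \sup_{g \in K} \norm[\Eis(iy,\xi,\omega\xi^{-1};\vec{n})(g)] \ll (1+\lambda_{\vec{n},y})^N \log(1+\lambda_{\vec{n},y}) $$
uniformly on a fixed compact $K$, a single Cauchy--Schwarz gives
$$ \sum_{\xi,\vec{n}} \int_0^{\infty} \norm[c_{\vec{n}}(y)] \sup_{g \in K} \norm[\Eis(iy,\xi,\omega\xi^{-1};\vec{n})(g)] \, dy \ll \biggl( \sum_{\xi,\vec{n}}\int_0^{\infty} (1+\lambda)^{2A}\norm[c_{\vec{n}}]^2 dy \biggr)^{1/2} \biggl( \sum_{\xi,\vec{n}}\int_0^{\infty} (1+\lambda)^{2N+2-2A} dy \biggr)^{1/2}. $$
Since $\lambda_{\vec{n},y}$ grows like $y^2 + \sum_{v \mid \infty} n_v^2$, choosing $A$ large enough (depending only on $N$ and $[\F:\ag{Q}]$) makes the second factor finite; the first is finite by the previous display. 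This proves both the $\intL^1$ convergence in $\pi$ and the normal convergence in $g$ of the continuous part.

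The discrete contributions are handled in the same spirit, and more easily: the residual summands $\chi \circ \det$ with $\chi^2=\omega$ form a finite list; for the cuspidal part, $\Pairing{\varphi}{\phi_{\pi,\vec{n}}}$ decays faster than any power of the archimedean Laplacian eigenvalue on $\phi_{\pi,\vec{n}}$ by repeated integration against $\Delta_{\infty}$, while standard Sobolev bounds control $\sup_K \norm[\phi_{\pi,\vec{n}}]$ polynomially. Applying the very same argument to $\Delta_{\infty}^B \varphi$ for every $B$ allows term-by-term differentiation of the resulting series, showing that the limit is smooth, not merely continuous. The main obstacle is really a bookkeeping one: making sure that the polynomial exponent $N$ and the implicit constants from Corollary \ref{SobBdContKiso} (which \emph{a priori} depend on the finite-place data) are controlled uniformly; this is exactly what the reduction to finitely many $\xi$ and finitely many $(n_v)_{v<\infty}$ via the $\gp{K}_0$-invariance of $\varphi$ accomplishes.
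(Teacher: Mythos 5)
Your argument is essentially the paper's: bound the $\gp{K}$-isotypic summand on a compact $K$ via Corollary \ref{SobBdContKiso} and Sobolev, match the resulting polynomial weight $(1+\lambda)^N$ against the rapid decay of $\Norm[\ProjF_\pi(\varphi)[\vec n]]$ supplied by Plancherel applied to $(1+\Delta_\infty)^{A}\varphi$, and close with Cauchy--Schwarz. Your explicit reduction to finitely many $\xi$ and finitely many finite-place $\gp{K}$-types via $\gp{K}_0$-invariance of $\varphi$ makes precise something the paper leaves implicit in the $\ll_{K,(n_v)_{v<\infty}}$ notation, and your remark about applying the same argument to $\Delta_\infty^B\varphi$ to get smoothness rather than mere continuity is a reasonable addendum.

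There is, however, one genuine gap, and it is precisely the point your proposal waves away. You write that the cuspidal part is ``handled in the same spirit, and more easily.'' That is not so: after Cauchy--Schwarz, the second factor in the cuspidal case is
\[
\Bigl(\sum_{\pi\ \text{cusp}}\sum_{\vec n} (1+\lambda(\pi;\vec n_\infty))^{\,2N'-2A}\Bigr)^{1/2},
\]
and its finiteness for large $A$ is \emph{not} automatic: it requires knowing that the number of cuspidal $\pi$ (with prescribed finite-place $\gp{K}$-type, say) with $\lambda(\pi;\vec n_\infty)\le T$ grows at most polynomially in $T$. This is a weak Weyl law for the cuspidal spectrum, which the paper cites explicitly (Theorem 2.23 of \cite{Wu14}, the main theme of \cite{Pa12}). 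By contrast, for the continuous part the analogous counting is elementary: the spectral parameter is an explicit integral over $y\in\ag{R}$ together with a sum over archimedean $\gp{K}$-types whose multiplicities and eigenvalues are written out in the local computations, so convergence for large $A$ is a direct calculation. The ordering of difficulty in your proposal is thus reversed, and without invoking a Weyl-type bound the cuspidal contribution is not controlled. Your phrase ``standard Sobolev bounds control $\sup_K\norm[\phi_{\pi,\vec n}]$ polynomially'' is correct, but that only gives the weight in the first factor; it does nothing towards the counting problem in the second.
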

\begin{proof}
	Similar to the continuous part, let $\lambda(\pi;\vec{n}_{\infty})$ be the eigenvalue of $\Delta_{\infty}$ on $\pi[\vec{n}]$. By Corollary \ref{SobBdContKiso} and the usual Sobolev's inequalities, we have for any given compact subset $K$
\begin{align*}
	&\quad \sum_{\pi \text{ cusp}} \sum_{\vec{n}} \extnorm{\ProjP_{\pi}(\varphi)[\vec{n}](g)} + \sum_{\chi^2=\omega} \extnorm{\ProjP_{\chi \circ \det}(\varphi)(g)} + \sum_{\xi} \int_0^{\infty} \sum_{\vec{n}} \extnorm{\ProjP_{iy,\xi,\omega\xi^{-1}}(\varphi)[\vec{n}](g)} \frac{dy}{2\pi} \\
	&\ll_{K,(n_v)_{v<\infty}} \sum_{\pi \text{ cusp}} \sum_{\vec{n}} (1+\lambda(\pi;\vec{n}_{\infty}))^N \extNorm{\ProjF_{\pi}(\varphi)[\vec{n}]} + \sum_{\chi^2=\omega} \extNorm{\ProjF_{\chi \circ \det}(\varphi)} + \\
	&\quad \sum_{\xi} \int_0^{\infty} \sum_{\vec{n}} (1+\lambda(iy,\xi,\omega\xi^{-1};\vec{n}_{\infty}))^{N+1} \extNorm{\ProjF_{iy,\xi,\omega\xi^{-1}}(\varphi)[\vec{n}]} \frac{dy}{2\pi} \\
	&\leq \Norm[(1+\Delta_{\infty})^{N+A}\varphi] \cdot \left( \sum_{\pi \text{ cusp}} \sum_{\vec{n}} (1+\lambda(\pi;\vec{n}_{\infty}))^{-2A} + \sum_{\xi} \int_0^{\infty} \sum_{\vec{n}} (1+\lambda(iy,\xi,\omega\xi^{-1};\vec{n}_{\infty}))^{2-2A} \frac{dy}{2\pi} \right)^{\frac{1}{2}} \\
	&\quad + \sum_{\chi^2=\omega} \extNorm{\ProjF_{\chi \circ \det}(\varphi)}.
\end{align*}
	For large enough $A$, the convergence of the integral concerning $\lambda(iy,\xi,\omega\xi^{-1};\vec{n}_{\infty})$ follows from the explicit calculation of its local components given above; while the convergence of the sum concerning $\lambda(\pi;\vec{n}_{\infty})$ is a weak Weyl law \cite[Theorem 2.23]{Wu14}, the main theme of \cite{Pa12}.
\end{proof}
\begin{proof}{(Theorem \ref{MainThm})}
	We are now in a situation completely analogous to the case of the classical Fourier analysis. Write $\tilde{\varphi}$ for the Fourier expansion of $\varphi$. By Lemma \ref{NConvFES} and Fubini we interchange the summations to get for any $h \in \Cont_c^{\infty}(\GL_2,\omega)$
\begin{align*}
	\Pairing{\tilde{\varphi}}{h} &= \sum_{\pi \text{ cusp}} \Pairing{\ProjF_{\pi}(\varphi)}{\ProjF_{\pi}(h)} + \sum_{\chi^2=\omega} \Pairing{\ProjF_{\chi \circ \det}(\varphi)}{\ProjF_{\chi \circ \det}(h)} \\
	&\quad + \sum_{\xi} \int_0^{\infty} \Pairing{\ProjF_{iy,\xi,\omega\xi^{-1}}(\varphi)}{\ProjG_{iy,\xi,\omega\xi^{-1}}(h)}_{iy,\xi,\omega\xi^{-1}} \frac{dy}{2\pi}.
\end{align*}
But we can replace $\ProjG_{iy,\xi,\omega\xi^{-1}}(h)$ with $\ProjF_{iy,\xi,\omega\xi^{-1}}(h)$ by Lemma \ref{FFforCcInf}. Then the Plancherel formula gives
	$$ \Pairing{\tilde{\varphi}}{h} = \Pairing{\varphi}{h}, \forall h \in \Cont_c^{\infty}(\GL_2,\omega), $$
which means $\tilde{\varphi}$ and $\varphi$ are equal in the sense of distributions. Both being continuous functions, they must be equal pointwise.
\end{proof}

\section{An Incomplete ``Proof''}

	There is an incomplete argument, which is our understanding of the ambiguous ``proof'' of \cite[Theorem 1.1]{CP90} in the direction of Fourier inversion. It is guided by the following beautiful functional analytic viewpoint on Fourier inversion. First let's go back to the formalism of spectral decomposition. We can consider formulae more general than the Plancherel formula by writing
\begin{equation}
	\Pairing{\ell}{v} = \int_{\widehat{\gp{G}}} \Pairing{\ProjF_{\pi}(\ell)}{\ProjF_{\pi}(v)} d\mu(\pi),
\label{SpecDell}
\end{equation}
\noindent where $v \in W \subset V_{\rpR}$ is a subspace with its own topology and $\ell \in W^*$ in the topological dual of $W$. Of course, we must give $\ProjF_{\pi}(\ell)$ suitable definition. On the other hand, in the setting of Fourier inversion with $X=\Gamma \backslash \gp{G}$ a homogeneous space, we ask formulae like
\begin{equation}
	\Pairing{\delta_{\Gamma e}}{v} = \int_{\widehat{\gp{G}}} \Pairing{\delta_{\Gamma e}}{\FuncRA_{\pi} \circ \ProjF_{\pi}(v)} d\mu(\pi).
\label{FourInvDelta}
\end{equation}
\noindent Specializing to the case $\gp{G}$ is a Lie group and $\Gamma$ is a lattice, we can use the Lie algebra to define the Sobolev spaces $W^{p,q}(X) \subset \intL^q(X,dx)$. In particular, the space of smooth vectors $V_{\rpR}^{\infty}$ is just $W^{\infty,2}(X)$. Linearizing around $\Gamma e$, we find that $\delta_{\Gamma e} \in W^{-\infty,2}(X)$. A general theorem in the theory of Sobolev spaces states that any functional in $W^{-\infty,2}(X)$ is just a finite combination of weak derivatives of elements in $\intL^2(X,dx)$. So (\ref{SpecDell}) makes sense for $\ell=\delta_{\Gamma e}$. It remains to check for a.e. $\pi$
	$$ \Pairing{\ProjF_{\pi}(\delta_{\Gamma e})}{\ProjF_{\pi}(v)} = \Pairing{\delta_{\Gamma e}}{\FuncRA_{\pi} \circ \ProjF_{\pi}(v)}, v \in W^{\infty,2}(X). $$
Writing explicitly $\delta_{\Gamma e}$ as weak derivatives (of a smooth transitioned Heaviside function + a $\Cont_c^{\infty}$ function), we are lead to check
	$$ \Pairing{\ProjF_{\pi}(v)}{\ProjF_{\pi}(v_0)} = \Pairing{\FuncRA_{\pi} \circ \ProjF_{\pi}(v)}{v_0}, v \in W^{\infty,2}(X) $$
for some $v_0 \in \Cont_c^{\infty}(X-\{ \Gamma e \})$ with singularity at $\Gamma e$. Since $\ProjF_{\pi}(W^{\infty,2}(X)) \subset V_{\pi}^{\infty}$, we only need to check
	$$ \Pairing{v}{\ProjF_{\pi}(v_0)} = \Pairing{\FuncRA_{\pi}(v)}{v_0}, v \in V_{\pi}^{\infty}. $$
We know the validity of the above equation if we restrict $v_0 \in V$ (c.f. Remark \ref{IncompThetaAdj}) or even for $v_0 \in \Cont_c^{\infty}(\GL_2,\omega)$ (Lemma \ref{FFforCcInf}), but we do not know how to extend it to the $v_0$ with singularity which interests us.


	There is another way to understand \cite[Theorem 1.1]{CP90}, which applies to a special class of functionals in $W_0^* \subset W^{-\infty,2}(X)$ and vectors in $W^{\infty,2}(X)$. Assume
\begin{itemize}
	\item we have a closed subgroup $\gp{H} < \gp{G}$ and a (quasi-)character $\chi: \gp{H} \to \ag{C}^{\times}$ such that $\ell$ is $(\gp{H},\chi)$-covariant, i.e.,
	$$ \ell(\rpR(h).v) = \chi(h) \ell(v), \forall h \in \gp{H}, v \in W^{\infty,2}(X). $$
	\item the space of $(\gp{H},\chi)$-covariant functionals has multiplicity one property, i.e., for any unitary irreducible representation $\pi$ of $\gp{G}$, the space of continuous functionals $\ell$ on $V_{\pi}^{\infty}$ satisfying
	$$ \ell(\pi(h).v) = \chi(h) \ell(v), \forall h \in \gp{H}, v \in V_{\pi}^{\infty} $$
is of dimension at most $1$, and we fix a non trivial one $\ell_{\pi}$ for each $\pi$.
\end{itemize}
Then from $\ell(\rpR(h).v) = \chi(h) \ell(v)$ and the interpretation of $\ell$ as weak derivatives we could deduce
	$$ \Pairing{\ProjF_{\pi}(\ell)}{\pi(h).v} = \chi(h) \Pairing{\ProjF_{\pi}(\ell)}{v}, \forall v \in V_{\pi}^{\infty}. $$
Hence $\ProjF_{\pi}(\ell) = c(\pi) \ell_{\pi}$ for some $c(\pi) \in \ag{C}$ by multiplicity one. Then we apply $\ell$ to $v \in V$ and use Proposition \ref{FourInvIncompTheta} to determine $c(\pi)$. This seems to be the correct way to understand \cite[Theorem 1.1]{CP90}. However, in this way the concerned theorem does not imply Fourier inversion.

\section*{Acknowledgement}

	The author would like to thank Daniel Wuersch and Manuel Luethi, without the questions from whom he would never have decided to write a note on this problem. The author would also like to thank Pengyu Le, without the several detailled discussions with whom he would never have decided to write the note in the form of an article. The author is grateful for the suggestive discussions with Prof. Paul Nelson at ETHZ, Prof. Philippe Michel at EPFL, Prof. Zhiying Wen \& Prof. Jiayan Yao \& Prof. Yanhui QU at Tsinghua University, and Dr. Yanqi Qiu in CNRS at IMT during the preparation of this paper. The author is also grateful to the math department at Xi'an Jiao Tong University for giving him the chance to talk about the results in this paper in public. The preparation of the paper begins at the end of the author's stay in FIM at ETHZ and ends at the beginning of the author's stay in YMSC at Tsinghua University. The author would like to thank both institutes for their hospitality.

\bibliographystyle{acm}

\bibliography{mathbib}

\address{\quad \\ Han WU \\ 302, Jin Chun Yuan West Building \\ YMSC, Tsinghua University \\ 10084, Beijing \\ China \\ wuhan1121@yahoo.com}

\end{document}